\newtheorem{theorem}{Theorem}
\newtheorem{corollary}[theorem]{Corollary}
\newtheorem{definition}[theorem]{Definition}
\newtheorem{lemma}[theorem]{Lemma}
\newtheorem{proposition}[theorem]{Proposition}
\numberwithin{equation}{section}
\numberwithin{theorem}{section}
\newcommand{\F}{{\mathcal F}}
\newcommand{\ZZ}{{\mathbb Z}}
\newcommand{\RR}{{\mathbb R}}
\newcommand{\NN}{{\mathbb N}}
\newcommand{\half}{\frac{1}{2}}
\newcommand{\supp}{\operatorname{supp}}
\renewcommand{\Im}{\mathop{\rm Im}\nolimits}
\theoremstyle{plain}
\newtheorem{prop}{Proposition}[section]
\newtheorem{rema}[prop]{Remark}
\newtheorem*{Notations}{Notations}
\theoremstyle{definition}
\numberwithin{equation}{section}
\def\squarebox#1{\hbox to #1{\hfill\vbox to #1{\vfill}}}
\newcommand{\ds}{\displaystyle}
\newcommand{\p}{\partial}
\def\dH{\dot{H}}
\def\ZZ{{\mathbb Z}}
\def\NN{{\mathbb N}}
\def\reals{{\mathbb R}}
\def\Ci{{\mathcal C}^\infty}
\def\Im{\,\mathrm{Im}\,}
\def\WF{\mathrm{WF}_h}
\def\supp{\mathrm{supp}\,}
\def\id{\,\mathrm{id}\,}
\def\O{{\mathcal O}}
\def\SS{{\mathbb S}}
\def\s{{\mathcal S}}
\def\scl{{\mathcal{S}_{\text{cl}}}}
\def\Op{\mathrm{Op}\,}
\def\esssupp{\text{ess-supp}\,}
\def\phi{\varphi}
\def\half{{\frac{1}{2}}}
\def\dist{\text{dist}\,}
\def\be{\begin{eqnarray*}}
\def\ee{\end{eqnarray*}}
\def\ben{\begin{eqnarray}}
\def\een{\end{eqnarray}}
\def\lll{\left\langle}
\def\rrr{\right\rangle}
\def\L2R{L_{\text{Rest}}^2}
\def\11{\mathds{1}}
\def\RR{\mathbb{R}}
\def\tpsi{\tilde{\psi}}
\def\L2c{L^2_{\text{comp}}}
\def\tphi{\tilde{\phi}}
\def\tDelta{\widetilde{\Delta}}
\def\F{\mathcal{F}}
\def\dWF{\mathrm{WF}_{h,\delta, \gamma}}
\def\Vol{\text{Vol}}
\def\sWF{\mathrm{WF}}
\title
[Nonlinear quasimodes]
{Nonlinear quasimodes near elliptic periodic geodesics}
\author[P. Albin]
{Pierre Albin}
\email{albin@math.jussieu.fr}
\address{Institut de Math\'ematiques de Jussieu \\
175 rue du Chevaleret \\
Paris 75013, France} 
\author[H. Christianson]
{Hans Christianson}
\email{hans@math.unc.edu}
\address{Department of Mathematics, UNC-Chapel Hill \\ CB\#3250
  Phillips Hall \\ Chapel Hill, NC 27599}
\author[J.L. Marzuola]
{Jeremy L. Marzuola}
\email{marzuola@math.unc.edu}
\address{Department of Mathematics, UNC-Chapel Hill \\ CB\#3250
  Phillips Hall \\ Chapel Hill, NC 27599}
\author[L. Thomann]
{Laurent Thomann}
\email{Laurent.Thomann@univ-nantes.fr}
\address{ Universit\'e de Nantes, Laboratoire de Math\'ematiques Jean Leray \\
Nantes, France}
\begin{document}

\begin{abstract}
We consider the nonlinear Schr\"odinger equation on a compact
manifold near an elliptic periodic geodesic.  Using a geometric optics
construction, we construct quasimodes to a nonlinear stationary
problem which are highly localized near the periodic geodesic.  We
show the
nonlinear 
Schr\"odinger evolution of such a quasimode remains localized near the
geodesic, at least for short times.
\end{abstract}

\maketitle

\section{Introduction}
In this note, we study the Nonlinear Schr\"odinger Equation (NLS) on a
compact, Riemannian manifold with a periodic elliptic (or stable)
geodesic, which we define and discuss in more detail in Appendix \ref{quasi}.  Specifically, we study solutions to
\begin{eqnarray}
\label{nls}
\left\{ \begin{array}{c}
i \p_t \psi = \Delta_g \psi + \sigma |\psi|^{p} \psi, \\
\psi (x,0) = \psi_0,
\end{array} \right.
\end{eqnarray}
where $\Delta_g$ is the standard (negative semidefinite) Laplace-Beltrami operator and the solution is of the form
\begin{eqnarray*}
\psi (x,t) = e^{-i \lambda t} u(x).
\end{eqnarray*}
To solve the resulting nonlinear elliptic equation, which can be
analyzed using constrained variations, we will use Fermi
coordinates to construct a nonlinear quasimode similar to the one
presented in \cite{Tho} for an arbitrary manifold with a periodic,
elliptic orbit.  To do so, we must analyze the metric geometry in a
neighbourhood of a periodic orbit, for which we use the presentation in
\cite{Tubes}.  For further references to construction of quasimodes
along elliptic geodesics, see the seminal works Ralston \cite{Ral-beam},
Babi{\v{c}} \cite{Bab-geo}, Guillemin-Weinstein \cite{GuWe-geo}, Cardoso-Popov \cite{CaPo-quasi}, as well as the
thorough survey on spectral theory by Zelditch
\cite{Zel-survey}.  For experimental evidence of the existence of such
Gaussian beam type solutions in nonlinear optics as solutions to
nonlinear Maxwell's equations, see the recent paper by Schultheiss et
al \cite{optics}.

Henceforward, we assume that $(M,g)$ is  a compact Riemannian manifold
of dimension $d \geq 2$ without boundary, and that it admits an elliptic periodic geodesic $\Gamma \subset M$.  For    $\beta>0$, we introduce the notation 
\begin{equation*}
U_\beta= \{ x \in M : \dist_g(x, \Gamma) < \beta \}.
\end{equation*}
Let $L>0$ be the period of $\Gamma$.  Throughout the paper, each time we refer to the small parameter $h>0$, this means that $h$ takes the form
\begin{equation}\label{formeh}
 h=\frac L{2\pi N},
\end{equation}
for some large integer $N\in \NN$.

Our first result is the existence, in the case of a smooth nonlinearity, of    quasimodes which are highly
concentrated near the elliptic periodic geodesic $\Gamma$. More precisely : 
\begin{theorem}
\label{T:0}
Let $(M,g)$ be a compact Riemannian manifold of dimension $d \geq 2$, without boundary and  which has an elliptic periodic geodesic.  Let $p$ be an even integer,  let $s\geq 0$ and assume that 
\begin{equation*}
 p \big(\frac{d-1}4-s\big)<1.
\end{equation*}
Then for  $h \ll 1$ sufficiently small, and any $\delta>0$, there exists  
$\phi_h(x) \in H^{\infty}(M)$ satisfying 
\begin{itemize}
\item[(i)] Frequency localization : For all $r\geq 0$, there exist $C_{1}, C_{2}$ independent of $h$ such that 
\begin{equation*}
 C_{1} h^{s-r} \leq \| \phi_h \|_{H^r(M)} \leq C_{2} h^{s-r}.
\end{equation*}
\item[(ii)] Spatial localization near $\Gamma$ : 
\begin{equation*}
\|\phi_h \|_{H^s(M \setminus U_{h^{1/2-\delta}})} = \O(h^{\infty}) \| \phi_h \|_{H^s(M)}.
\end{equation*}
\item[(iii)] Nonlinear quasimode : There exists $\lambda(h) \in \RR$ so that $\phi_{h}$ satisfies the equation 
\begin{equation*}
-\Delta_{g} \phi_h=\lambda(h)\phi_{h}    -\sigma | \phi_h|^{p}\phi_h+\mathcal{O}(h^{\infty}).
\end{equation*}
\end{itemize}
\end{theorem}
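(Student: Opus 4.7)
The plan is to carry out a Gaussian beam (WKB) construction in Fermi coordinates adapted to $\Gamma$, following the strategy of \cite{Tho}, with the nonlinearity $\sigma|\phi_h|^p\phi_h$ treated as a subleading forcing in the transport hierarchy. The hypothesis $p((d-1)/4 - s) < 1$ is precisely the condition which places the nonlinear term strictly below the order of the transport equation, so that the linear Gaussian beam scheme can be run essentially unchanged.

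First, using \cite{Tubes}, introduce Fermi coordinates $(s,y) \in (\RR/L\ZZ) \times \RR^{d-1}$ on the tube $U_\beta$, with $\Gamma = \{y = 0\}$. In these coordinates the metric and $\Delta_g$ admit a Taylor expansion in $y$ whose leading transverse data record the curvature of $M$ along $\Gamma$ and the linearized Poincar\'e map; ellipticity of $\Gamma$ means the latter has eigenvalues on the unit circle. Introduce the rescaled transverse variable $Y = y/\sqrt{h}$ and look for a solution of the form
\begin{equation*}
\phi_h(s,y) = h^{s - (d-1)/4}\, \chi\!\left(\tfrac{y}{h^{1/2-\delta}}\right)\, e^{is/h} \sum_{j=0}^{J} h^{j/2}\, a_j\!\left(s, \tfrac{y}{\sqrt{h}}\right),
\end{equation*}
with $a_j(s,Y)$ Schwartz in $Y$, along with an asymptotic expansion $\lambda(h) = h^{-2} + h^{-1}\lambda_0 + \lambda_1 + \cdots$. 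The prefactor $h^{s-(d-1)/4}$ is chosen so that the Gaussian mass in $Y$ gives $\|\phi_h\|_{L^2} \sim h^s$; since $\phi_h$ is semiclassically localized near $|\xi| \sim h^{-1}$ in the direction of $\Gamma$, this gives (i) with $\|\phi_h\|_{H^r} \sim h^{s-r}$. Property (ii) follows immediately from the Gaussian profile of $a_0$: on $|y| \geq h^{1/2-\delta}$ the Gaussian is $\O(e^{-c\, h^{-2\delta}}) = \O(h^\infty)$, and the cutoff $\chi$ only removes this already exponentially small tail.

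Substituting the ansatz into $-\Delta_g\phi_h - \lambda(h)\phi_h + \sigma|\phi_h|^p\phi_h = 0$ and matching powers of $h^{1/2}$ produces a WKB hierarchy. At order $h^{-1}$ one obtains a complex transverse harmonic oscillator in $Y$ coupled to transport along $s$; one then chooses $a_0(s,Y) = c(s)\,\exp(iQ(s)Y\cdot Y/2)$, where $Q(s)$ is a complex symmetric matrix with $\Im Q(s) > 0$ satisfying a matrix Riccati equation along $\Gamma$. Ellipticity of the Poincar\'e map provides a periodic solution with $\Im Q > 0$. The amplitude $c(s)$ then satisfies a scalar transport ODE, and the Bohr--Sommerfeld condition arising from the requirement that $c(s)$ be periodic, together with the quantization $h = L/(2\pi N)$, $N \in \NN$, determines $\lambda_0$. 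At each subsequent order $h^{j/2 - 1}$ one solves an inhomogeneous harmonic oscillator equation for $a_j$ whose right-hand side collects the subleading geometric terms from the expansion of $\Delta_g$ together with the contribution of $\sigma |\sum_k h^{k/2} a_k|^p \sum_k h^{k/2}a_k$ expanded around $a_0$. Because $p$ is even, $|\cdot|^p$ is polynomial in $a_k, \overline{a_k}$ and the forcing at each stage is manifestly Schwartz in $Y$; solvability is arranged by tuning $\lambda_j$, and one iterates to any desired order $J$.

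The main obstacle is checking that inserting the nonlinear term into the Gaussian beam hierarchy does not disturb the Fredholm structure of the transport equations. A pointwise estimate on the tube shows that $|\sigma|\phi_h|^p\phi_h|$ is smaller than the linear transport contribution of size $h^{s-(d-1)/4 - 1}$ by a factor of $h^{1 - p((d-1)/4 - s)}$, which is strictly positive under the hypothesis. Hence the nonlinearity appears only as a legitimate subleading forcing at some order $h^{j/2 - 1}$ with $j \geq 1$ and never competes with the principal symbol that controls the Gaussian profile. Truncating at sufficiently large $J$ and estimating the resulting residue in $H^s$ --- including the contribution from the cutoff $\chi$, which is itself $\O(h^\infty)$ --- yields (iii).
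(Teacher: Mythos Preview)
Your overall strategy matches the paper's: a Gaussian-beam WKB ansatz in Fermi coordinates with the nonlinearity treated as a subprincipal forcing, iterated to arbitrary order thanks to the polynomial structure of $|\cdot|^p$ for even $p$. Your leading-order construction via a complex quadratic phase $Q(s)$ solving a matrix Riccati equation is the classical Ralston/Babi\v{c} picture; the paper phrases the same step through the quantum monodromy operator (Proposition~\ref{P:solution-to-0-eqn} and Appendix~\ref{quasi}), but these produce the same ground-state Gaussian profile.

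There is, however, a gap in your expansion. You propose a single asymptotic series in powers of $h^{1/2}$ and then assert that the nonlinear contribution ``appears only as a legitimate subleading forcing at some order $h^{j/2-1}$ with $j \geq 1$.'' But the nonlinearity enters the hierarchy at level $h^{q_s}$ below the transport equation, where $q_s = 1 - p\big(\tfrac{d-1}{4} - s\big)$, and there is no reason for $q_s$ to be a half-integer. For generic $(d,p,s)$ the scales $h^{1/2}$ (generated by the Taylor expansion of the metric in Fermi coordinates) and $h^{q_s}$ (generated by the nonlinearity and its iterates) are incommensurate, so a single-index expansion cannot match all orders and your hierarchy does not close. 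The paper fixes this in \S\ref{sec:nthorder} by rescaling $w = h^s v$ and expanding both $v$ and the quasi-eigenvalue as a \emph{double} series $\sum_{j,\ell} h^{j/2 + \ell q_s}\, v_{j,\ell}$; each $v_{j,\ell}$ then satisfies an inhomogeneous transport/oscillator equation of exactly the type you describe, with the constant $E_{j,\ell}$ tuned to kill the component along $v_{0,0}$ so that the equation is solvable (Proposition~\ref{P:inhomog}). With that amendment your argument is complete.
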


 The quasimode $\phi_{h}$ and the nonlinear eigenvalue $\lambda(h)$ will be constructed thanks to a WKB method, this will give a precise description of these objects (see Section \ref{sec:nthorder}). For instance, $\lambda(h)$ reads $\lambda(h)=h^{-2}-E_{0}h^{-1}+o(h^{-1})$, for some $E_{0}\in \RR$.
 
As a consequence of Theorem \ref{T:0}, and under the same   assumptions, we can state : 
\begin{theorem}
\label{T:1}
Consider the function $\phi_{h}$ given by Theorem \ref{T:0}. There exists $c_{0}>0$, such that if we denote by $\ds T_{h}=c_{0}h^{p(\frac{d-1}4-s)}\ln(\frac1h)$, the solution $u_{h}$ of the Cauchy problem \eqref{nls} with initial condition $\phi_{h}$ satisfies 
 \[
\| u_h\|_{L^\infty ([0,T_h]; H^s(M \setminus
  U_{h^{1/2-\delta}})) } \leq C h^{(d+1)/4}.
\]
\end{theorem}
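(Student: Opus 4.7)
The plan is to propagate the ansatz $v_h(x,t):=e^{-i\lambda(h)t}\phi_h(x)$ as an approximate solution to \eqref{nls} and close an energy estimate on the difference $w_h := u_h - v_h$ up to time $T_h$. By Theorem~\ref{T:0}(iii) and a direct computation,
\[
i\partial_t v_h - \Delta_g v_h - \sigma|v_h|^p v_h = e^{-i\lambda(h)t}R_h, \qquad \|R_h\|_{H^r(M)} = \mathcal{O}(h^\infty)\ \text{for every } r\geq 0,
\]
and since $u_h(0)=v_h(0)=\phi_h$, the error $w_h$ vanishes at $t=0$ and satisfies
\[
i\partial_t w_h -\Delta_g w_h = \sigma\bigl(F(v_h+w_h)-F(v_h)\bigr)+e^{-i\lambda(h)t}R_h,
\]
where $F(z)=|z|^p z$ is polynomial in $(z,\bar z)$ because $p$ is even.

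I would then run $H^r$ energy estimates on $w_h$ at two regularities simultaneously: $r = s$ for the target bound, and $r = N_0$ for some fixed $N_0 > d/2$ to control $\|w_h\|_{L^\infty}$ via Sobolev embedding. Applying $(1-\Delta_g)^{r/2}$, pairing with $(1-\Delta_g)^{r/2}w_h$ in $L^2$, and taking imaginary parts kills the linear Schr\"odinger part; Moser-type tame estimates for the polynomial $F$ then give
\[
\frac{d}{dt}\|w_h(t)\|_{H^r} \leq C\bigl(\|v_h\|_{L^\infty}+\|w_h\|_{L^\infty}\bigr)^p\|w_h\|_{H^r} + C_{r,M}h^M
\]
for any $M$, modulo a lower-order cross term that is harmless under the bootstrap below. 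The explicit WKB construction in Section~\ref{sec:nthorder}---a Gaussian-beam ansatz concentrated on a tube of width $\sim h^{1/2}$ with amplitude normalized so that $\|\phi_h\|_{L^2}\sim h^s$---yields the sharper bound
\[
\|v_h\|_{L^\infty(M)} \leq C h^{s-(d-1)/4},
\]
which is not available from Theorem~\ref{T:0}(i) by Sobolev embedding alone. The subcriticality hypothesis $p((d-1)/4 - s) < 1$ then gives $\|v_h\|_{L^\infty}^p \leq C h^{-p((d-1)/4-s)} \ll h^{-1}$.

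Bootstrapping with the assumption $\|w_h\|_{L^\infty}\leq\|v_h\|_{L^\infty}$ on $[0,T_h]$ and applying Gronwall,
\[
\|w_h(t)\|_{H^r} \leq C_{r,M}\,h^M\,\exp\bigl(Ct\,h^{-p((d-1)/4-s)}\bigr).
\]
The time scale $T_h = c_0 h^{p((d-1)/4-s)}\ln(1/h)$ is chosen exactly so that this exponential stays polynomial in $h$: for $c_0$ small and $M=M(r)$ large, one obtains $\|w_h(T_h)\|_{H^s}\leq h^{(d+1)/4}$ from $r=s$, and $\|w_h(T_h)\|_{H^{N_0}}\leq h^{N_0+1}$ from $r=N_0$, the latter closing the bootstrap via Sobolev and simultaneously giving existence of $u_h$ on $[0,T_h]$. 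Combining with Theorem~\ref{T:0}(ii), which yields $\|v_h\|_{H^s(M\setminus U_{h^{1/2-\delta}})}=\mathcal{O}(h^\infty)$, we conclude
\[
\|u_h\|_{H^s(M\setminus U_{h^{1/2-\delta}})} \leq \|w_h\|_{H^s(M)}+\|v_h\|_{H^s(M\setminus U_{h^{1/2-\delta}})}\leq Ch^{(d+1)/4},
\]
uniformly on $[0,T_h]$.

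The main obstacle will be running the two-scale bootstrap with uniform constants and ensuring that the Moser cross term of the form $\|v_h\|_{L^\infty}^{p-1}\|v_h\|_{H^r}\|w_h\|_{L^\infty}$ never dominates: this relies on the $H^\infty$ regularity of $\phi_h$, so that Theorem~\ref{T:0}(iii) delivers an $\mathcal{O}(h^\infty)$ remainder in every Sobolev norm. The subcriticality assumption $p((d-1)/4-s)<1$ is precisely the threshold at which the Gronwall exponent can absorb a $\log(1/h)$ factor while leaving a polynomial gain in $h$ for the error $w_h$.
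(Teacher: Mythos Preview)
Your overall strategy---propagate $v_h=e^{-i\lambda t}\phi_h$, energy-estimate the difference $w_h$, bootstrap, Gronwall, then combine with the spatial localization of $v_h$---matches the paper's, and your identification of the sharp bound $\|v_h\|_{L^\infty}\lesssim h^{s-(d-1)/4}$ as the driver of the Gronwall rate is correct. The gap is precisely in the Moser cross term, which you flag but call ``harmless.'' In the standard tame product estimate at level $r$,
\[
\|F(v_h+w_h)-F(v_h)\|_{H^r}\lesssim (\|v_h\|_{L^\infty}+\|w_h\|_{L^\infty})^p\|w_h\|_{H^r}
+(\|v_h\|_{L^\infty}+\|w_h\|_{L^\infty})^{p-1}\|v_h\|_{H^r}\|w_h\|_{L^\infty},
\]
the second term at $r=N_0>d/2$ carries $\|v_h\|_{H^{N_0}}\sim h^{s-N_0}$ (from Theorem~\ref{T:0}(i)), which exceeds $\|v_h\|_{L^\infty}\sim h^{s-(d-1)/4}$ by a factor $h^{(d-1)/4-N_0}\gg 1$. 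Whether you treat this term as a source under the bootstrap $\|w_h\|_{L^\infty}\le\|v_h\|_{L^\infty}$, or feed $\|w_h\|_{L^\infty}\lesssim\|w_h\|_{H^{N_0}}$ back into it, the bootstrap at level $N_0$ fails to close on the stated time scale: in the first case the source $T_h\|v_h\|_{L^\infty}^p\|v_h\|_{H^{N_0}}\sim h^{s-N_0}\ln(1/h)$ forces $\|w_h\|_{H^{N_0}}\gtrsim h^{s-N_0}$, hence $\|w_h\|_{L^\infty}\gtrsim h^{s-N_0}\gg\|v_h\|_{L^\infty}$; in the second case the Gronwall rate degrades by $h^{(d-1)/4-N_0}$. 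The $H^\infty$ regularity of $\phi_h$ controls only the residual $R_h$, not the size of $\|v_h\|_{H^{N_0}}$.

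The paper's fix (Proposition~\ref{properreur}) is to run the entire argument in the \emph{semiclassical} Sobolev norm $\|f\|_{H^k_h}=\|(1-h^2\Delta)^{k/2}f\|_{L^2}$ for one fixed $k>d/2$. Because $\phi_h=e^{ix_0/h}h^{-(d-1)/4+s}f(x_0,h^{-1/2}x')$ oscillates at frequency $\sim h^{-1}$, one has the crucial bound $\|(1-h^2\Delta)^{k/2}v_h^{\,j}\|_{L^\infty}\lesssim h^{j(s-(d-1)/4)}$, the \emph{same} scaling as $\|v_h^{\,j}\|_{L^\infty}$; in the semiclassical Leibniz estimate \eqref{produithh} the cross term then contributes at the same order as the main term, and the Gronwall rate is exactly $h^{-p((d-1)/4-s)}$. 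The bootstrap is on $\|w_h\|_{H^k_h}\le Ch^{(d+1)/4+s}$ (which dominates the nonlinear term via \eqref{produithh2}), and the $H^s$ bound is recovered at the end by $\|w_h\|_{H^s}\le h^{-s}\|w_h\|_{H^k_h}$. Switching your two-scale standard-Sobolev scheme to a single semiclassical norm is what makes the cross term genuinely lower order.
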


To construct a quasimode with $\mathcal{O}(h^{\infty})$ error, we need that the nonlinearity is of polynomial type, that is $p\in 2\NN$.     However, in  the special case where $d=2$ and $s=0$,  we are  able to obtain a result for any $0<p<4$, which is a weaker version of Theorem \ref{T:1}.

\begin{theorem}
\label{T:2}
Let $(M,g)$ be a compact Riemannian surface  without boundary which admits an elliptic periodic geodesic and  let $0<p<4$. Then for  $h \ll 1$ sufficiently small, and any $\delta,\epsilon>0$, there exist
$\phi_h(x) \in H^{\infty}(M)$ and $\nu>0$ satisfying 
\begin{itemize}
\item[(i)] Frequency localization : For all $r\geq 0$, there exist $C_{1}, C_{2}$ independent of $h$ such that 
\begin{equation*}
 C_{1} h^{-r} \leq \| \phi_h \|_{H^r(M)} \leq C_{2} h^{-r}.
\end{equation*}
\item[(ii)] Spatial localization near $\Gamma$ : 
\begin{equation*}
\|\phi_h \|_{L^{2}(M \setminus U_{h^{1/2-\delta}})} = \O(h^{\infty}) \| \phi_h \|_{L^{2}(M)},
\end{equation*}
\end{itemize} such that the corresponding solution $u_h$ to \eqref{nls}
satisfies
\[
\| u_h \|_{L^\infty([0,T_h] ; L^{2}(M \setminus
  U_{h^{1/2-\delta}})) } \leq C h^{\nu},
\]
for $T_{h}=h^{p}$ in the case $p\in (0,4)\backslash \{1\}$, and $T_{h}=h^{1+\epsilon}$ 
in the case $p=1$.
\end{theorem}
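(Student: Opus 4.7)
The plan is three stages: (I) construct a linear Gaussian beam quasimode $\phi_h$ on $\Gamma$, accurate to large but finite order in $h$; (II) bound the deviation of the NLS flow $u_h$ from the linear evolution $e^{-it\Delta_g}\phi_h$ in $L^2$ using Duhamel and Gagliardo--Nirenberg; and (III) upgrade this to the stated exterior $h^\nu$ decay via a localized energy estimate.

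For Stage I, the non-smoothness of $|\psi|^p\psi$ for $p\notin 2\NN$ prevents closing the nonlinear WKB iteration of Theorem \ref{T:0} at order $h^\infty$. Instead one builds a finite-order \emph{linear} Gaussian beam in Fermi coordinates $(s,y)$ along $\Gamma$, of the form $\phi_h(s,y)=e^{is/h}\sum_{k=0}^N h^{k/2}a_k(s,y/\sqrt h)$, with $a_0$ the transverse Gaussian fixed by the Poincar\'e map of $\Gamma$ and $a_k$ Hermite-like amplitudes solving the standard transport equations. Taking $N$ large yields (i), (ii), and $(-\Delta_g-\lambda(h))\phi_h=\O_{L^2}(h^N)$; consequently $e^{-it\Delta_g}\phi_h=e^{it\lambda(h)}\phi_h+\O_{L^2}(h^N|t|)$ preserves the concentration in $U_{h^{1/2-\delta}}$.

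For Stage II, set $w_h(t)=u_h(t)-e^{-it\Delta_g}\phi_h$, so $w_h(0)=0$ and $i\partial_t w_h=\Delta_g w_h+\sigma|u_h|^p u_h$. Duhamel and unitarity of $e^{-it\Delta_g}$ give
\[
\|w_h(t)\|_{L^2}\leq|\sigma|\int_0^t \|u_h(s)\|_{L^{2p+2}}^{p+1}\,ds.
\]
Mass conservation keeps $\|u_h\|_{L^2}\simeq 1$, and the 2D subcriticality $p<4$ combined with Gagliardo--Nirenberg lets the NLS Hamiltonian propagate $\|\nabla u_h\|_{L^2}\lesssim h^{-1}$ (absorbing the potential energy in the focusing case). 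A second Gagliardo--Nirenberg yields $\|u_h\|_{L^{2p+2}}^{p+1}\lesssim \|u_h\|_{L^2}\|\nabla u_h\|_{L^2}^p\lesssim h^{-p}$, so $\|w_h(t)\|_{L^2}\lesssim t\,h^{-p}$. Taking $T_h=h^p$ for $p\neq 1$ keeps $\|w_h\|_{L^2}$ bounded; at the borderline $p=1$ we must take $T_h=h^{1+\epsilon}$ to leave an $h^\epsilon$ margin in the above bound.

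For Stage III, take $\chi\in C^\infty(M)$ with $\chi\equiv 1$ on $M\setminus U_{h^{1/2-\delta}}$ and vanishing near $\Gamma$. Self-adjointness of $\Delta_g$ and gauge invariance of $|u_h|^p u_h$ kill those contributions in the $\Im$-pairing, giving
\[
\tfrac{d}{dt}\|\chi u_h\|_{L^2}\leq \|[\Delta_g,\chi]u_h\|_{L^2}.
\]
Writing $u_h=e^{-it\Delta_g}\phi_h+w_h$, the commutator applied to the linear piece is $\O(h^\infty)$ because $\supp\nabla\chi$ lies in the Gaussian tail of $\phi_h$; the remainder $[\Delta_g,\chi]w_h$ is handled by interpolating Stage II's $L^2$ control against an $H^1$ bound on $w_h$ coming from energy conservation, producing a factor $h^\nu$. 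Integrating in time yields the claim. The main obstacle is precisely this interpolation: $[\Delta_g,\chi]$ loses $h^{-1/2+\delta}$ per semiclassical derivative because of the tube scale, and this must be beaten by the Stage II smallness $t\,h^{-p}$; the resulting tradeoff fixes the admissible exponent $\nu=\nu(p,\delta)>0$ and forces the $\epsilon$-loss in $T_h$ at $p=1$, where the $L^2$ bound itself becomes only marginally informative.
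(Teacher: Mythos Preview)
Your approach has a genuine gap that prevents it from closing for the full range $0<p<4$.

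The core problem is in Stage~II. By comparing $u_h$ to the \emph{linear} evolution $e^{-it\Delta_g}\phi_h$, the entire nonlinearity $\sigma|u_h|^p u_h$ becomes the forcing term in Duhamel. Your Gagliardo--Nirenberg bound $\|u_h\|_{L^{2p+2}}^{p+1}\lesssim\|u_h\|_{L^2}\|\nabla u_h\|_{L^2}^p\lesssim h^{-p}$ is extremely wasteful: the gradient picks up the fast $e^{is/h}$ oscillation, which the modulus $|u_h|^p$ never sees. (The correct size of $\||u_h|^p u_h\|_{L^2}$ on a concentrated profile is $h^{-p/4}$, not $h^{-p}$, but you cannot use that without already knowing concentration persists.) With your bound, at $T_h=h^p$ you only get $\|w_h\|_{L^2}\lesssim 1$, not $o(1)$. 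All the smallness must then come from Stage~III, but the commutator $[\Delta_g,\chi]=2\nabla\chi\cdot\nabla+(\Delta\chi)$ costs one derivative of $w_h$; with $|\nabla\chi|\sim h^{-1/2+\delta}$ and only the global bound $\|\nabla w_h\|_{L^2}\lesssim h^{-1}$, integrating over $[0,T_h]$ yields at best $h^{p-3/2+\delta}$, which is not $h^\nu$ with $\nu>0$ unless $p>3/2$. The vague ``interpolation'' cannot help: interpolating $\|w_h\|_{L^2}=O(1)$ against $\|w_h\|_{H^1}=O(h^{-1})$ gives no improvement on $\|\nabla w_h\|_{L^2}$. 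Separately, your claim that energy conservation propagates $\|\nabla u_h\|_{L^2}\lesssim h^{-1}$ fails in the focusing case once $p\geq 2$ (mass-critical and supercritical in $d=2$): the inequality $\tfrac12 X^2\leq E+CX^p$ gives no a~priori bound on $X=\|\nabla u_h\|_{L^2}$ when $p>2$. Finally, nothing in your scheme singles out $p=1$; the $\epsilon$-loss at $p=1$ in the statement is an artifact of a different mechanism you have not identified.

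The paper avoids all of this by constructing a \emph{nonlinear} quasimode: even though $|\cdot|^p$ is non-smooth, one can still carry out the first one or two steps of the WKB hierarchy, producing $\phi_h$ with $(\Delta+\lambda)\phi_h=\sigma|\phi_h|^p\phi_h+h^{\alpha(p)}Q$ for an explicit $\alpha(p)>0$. Then $u_{\mathrm{app}}=e^{-it\lambda}\phi_h$ solves the \emph{nonlinear} equation up to a genuinely small error, so $w=u_h-u_{\mathrm{app}}$ is driven only by $h^{\alpha(p)}Q$ plus difference terms. These are controlled by a bootstrap in a Strichartz-type space $Y^s=C_tH^s\cap L^r_tW^{s_1,q}$ (using the Burq--G\'erard--Tzvetkov estimates on compact manifolds) together with fractional product rules for $|u|^p u$; the case split $p<1$, $p=1$, $p>1$ and the $\epsilon$-loss at $p=1$ arise precisely from these fractional Leibniz estimates. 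This yields $\|u_h-u_{\mathrm{app}}\|_{L^\infty_T L^2}\leq Ch^\nu$ directly, and the exterior bound follows immediately from the spatial localization of $u_{\mathrm{app}}$---no Stage~III commutator argument is needed.
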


\begin{rema}
In our current notations, the $\dH^1$ critical exponent for \eqref{nls} is
\[
p = \frac{4}{d-2} = \frac{4}{n-1},
\]
where $n$ is the dimension of the geodesic normal hypersurface to
$\Gamma$.  The $L^2$ critical exponent is $p = 4/d$, and the $\dH^{1/2}$ critical exponent is $p = 4/(d-1)$.

In general we construct highly concentrated
solutions along an elliptic orbit, which is effectively a $d-1$
dimensional soliton.  Since stable soliton solutions exist on
$\RR^d$ for monomial Schr\"odinger of the type in \eqref{nls} precisely
for $p < \frac{4}{d}$, this numerology matches well that required for the expected local
existence of stable nonlinear states on these lower dimensional
manifolds.

In addition, solving \eqref{nls} on compact manifolds with the power
$p = 4/(d-2)$ plays an important role in the celebrated Yamabe
problem, see \cite{SchYau}.  The authors hope that the
techniques here can give insight into the energy minimizers of such a problem
related to the geometry.
\end{rema}

\begin{rema}
In \cite{ChMa-Hyp}, the existence of nonlinear bound states on
hyperbolic space, $\mathbb{H}^d$, was explored.  In that case, it was
found that the geometry made compactness arguments at $\infty$ rather
simple, hence the only driving force for the existence of a nonlinear
bound state was the local behavior of the nonlinearity.  Work in progress with the second author,
third author, Michael Taylor and Jason Metcalfe is attempting to
generalize this observation to any rank one symmetric space.   In this note,
we find nonlinear quasimodes based purely on the local geometry, where
here the nonlinearity becomes a lower order correction, which is a
nicely symmetric result.  
\end{rema}

\begin{rema}
In a private conversation with Nicolas Burq, he has pointed out that
in settings where one assumes radial symmetry of the manifold, it is
possible to construct exact nonlinear bound states.
\end{rema}

\begin{Notations}
In this paper $c$, $C$ denote constants the value of which may change from line to line. We use the notations $a\sim b$,  
$a\lesssim b$ if $\frac1C b\leq a\leq Cb$, $a\leq Cb$ respectively. 
\end{Notations}

{\sc Acknowledgments.}  P. A. was supported by an NSF joint institutes postdoctoral
  fellowship (NSF grant DMS-0635607002) and a postdoctoral fellowship
  of the Foundation Sciences Math\'ematiques de Paris.  J.L.M. was
  supported in part by an NSF postdoctoral fellowship (NSF grant DMS-0703531) at Columbia
  University and a Hausdorff Center Postdoctoral Fellowship at the
  University of Bonn.  He would also like to thank both the Institut Henri
  Poincar\'e and the Courant Institute for being generous hosts during
part of the completion of this work.  H.C. was supported in part by
NSF grant DMS-0900524, and would also like to thank the University of
Bonn, Columbia University, the Courant Institute, and the Institut Henri Poincar\'e for their hospitality during
part of this work.  
  L.T. was supported in part by the
grants ANR-07-BLAN-0250 and ANR-10-JCJC 0109 of the Agence Nationale
de la Recherche.  In addition, the authors with to thank Rafe Mazzeo
and especially Colin Guillarmou for helpful conversations throughout
the preparation of this work.

\section{Outline of Proof}

The proof of Theorem \ref{T:1} is to solve approximately the
associated stationary equation.  That is, by separating variables in the $t$ direction, we
write
\[
\psi (x,t) = e^{-i \lambda t} u(x),
\]
from which we get the stationary equation
\[
(\lambda+ \Delta_g ) u = \sigma |u|^{p} u.
\]
For $h\ll1$ of the form \eqref{formeh}, the construction in the proof
finds a family of functions 
$$u_h(x) = h^{-(d-1)/4} g(h^{-1/2} x)$$ such that $g$ is rapidly decaying away from
$\Gamma$, $\Ci$, $g$ is normalized in $L^2$, and
\[
(\lambda+ \Delta_g ) u_h = \sigma |u_h|^{p} u_h +
Q(u_h),
\]
where $\lambda\sim h^{-2}$ and where the error $Q(u_h)$ is expressed by  a truncation of an
asymptotic series similar to that in \cite{Tho} and is of lower order in $h$.

The point is that Theorem \ref{T:1} (as well as Theorem \ref{T:2}) is an improvement over the trivial approximate
solution.  
It is well known that there exist quasimodes for the linear equation
localized near $\Gamma$ of the form
\[
v_h(x) = h^{-(d-1)/4} e^{is/h }f(s,h^{-1/2}x), \,\, (0<h\ll1),
\]
with $f$ a function rapidly decaying away from $\Gamma$, and $s$
a parametrization around $\Gamma$, so that
$v_h(x)$ satisfies
\[
(\lambda+\Delta_g )v_h = \O(h^{\infty}) \| v_h \|
\]
in any seminorm, see \cite{Ral-beam}.  Then
\[
(\lambda+ \Delta_g ) v_h= \sigma |v_h|^{p} v_h +
Q_2(v_h),
\]
where the error $Q_2(v_\lambda) = | v_h|^p v_h$ satisfies
\[
\| Q_2(v_h) \|_{\dH^s} = \O(h^{-s - p(d-1)/4} ).
\]


\section{A toy model}
\label{S:toy}

In this section we consider a toy model in two dimensions, and we give an idea of the proof of Theorem \ref{T:2}. For simplicity, we moreover assume that $2\leq p<4$. As it is a
toy model, we will not dwell on error analysis, and instead make
Taylor approximations at will without remarking on the error terms.
Consider the manifold
\[
M = \reals_x / 2 \pi \ZZ \times \reals_\theta / 2 \pi
\ZZ, 
\]
equipped with a metric of the form
\[
ds^2 = d x^2 + A^2(x) d \theta^2,
\]
where $A \in \Ci$ is a smooth function, $A \geq \epsilon>0$ for some
$\epsilon$.  
From this metric, we get the volume form
\[
d \Vol = A(x) dx d \theta,
\]
and the Laplace-Beltrami operator acting on $0$-forms
\[
\Delta_g f = (\partial_x^2 + A^{-2} \partial_\theta^2 + A^{-1}
A' \partial_x) f.
\]
We observe that we can conjugate
$\Delta_g$ by an isometry of metric spaces and separate variables so
that spectral analysis of $\Delta_g$ is equivalent to a one-variable
semiclassical problem with potential.  That is, let $S : L^2(X, d
\Vol) \to L^2(X, dx d \theta)$ be the isometry given by
\[
Su(x, \theta) = A^{1/2}(x) u(x, \theta).
\]
Then $\tDelta = S \Delta S^{-1}$ is essentially self-adjoint on $L^2 (
X, dx d \theta)$ with mild assumptions on $A$.  A simple calculation gives
\[
-\tDelta f = (- \partial_x^2 - A^{-2}(x) \partial_\theta^2 + V_1(x) )
f,
\]
where the potential
\[
V_1(x) = \frac{1}{2} A'' A^{-1} - \frac{1}{4} (A')^2 A^{-2}.
\]

We are interested in the nonlinear Schr\"odinger equation \eqref{nls},
so we make a separated Ansatz:
\[
u_\lambda(t, x, \theta) = e^{-i t \lambda} e^{i k \theta} \psi(x),
\]
where $k \in \ZZ$ and $\psi$ is to be determined (depending on both
$\lambda$ and $k$).  
Applying the Schr\"odinger operator (with
$\tDelta$ replacing $\Delta$) to
$u_\lambda$ yields the equation
\[
(D_t + \tDelta) e^{i t \lambda} e^{i k \theta} \psi(x) = (\lambda +
\partial_x^2 - k^2 A^{-2}(x) + V_1(x) ) e^{i t \lambda} e^{i k \theta}
\psi(x) = \sigma | \psi|^p e^{i t \lambda} e^{i k \theta} \psi(x),
\]
where we have used the standard notation $D =- i \p$.
We are interested in the behaviour of a solution or approximate
solution near an elliptic periodic geodesic, which occurs at a maximum
of the function $A$.  For simplicity, let
\[
A(x) = \sqrt{ (1 + \cos^2(x))/2 },
\]
so that in a neighbourhood of $x = 0$, $A^2 \sim 1 - x^2$ and $A^{-2}
\sim 1 + x^2$.  The function $V_1(x) \sim \text{const.}$ in a
neighbourhood of $x = 0$, so we will neglect $V_1$.  If we assume
$\psi(x)$ is localized near $x=0$, we get the stationary reduced equation
\[
(-\lambda + \partial_x^2 - k^2 ( 1 + x^2 ) ) \psi =- \sigma | \psi |^p
\psi .
\]
Let $h = |k|^{-1}$ and use the rescaling operator $T \psi(x) = T_{h,0}
\psi(x) = h^{-1/4} \psi(h^{-1/2} x )$ (see Lemma
\ref{L:T-lemma} below with $n=1$) to conjugate:
\[
T^{-1} (-\lambda + \partial_x^2 - k^2 ( 1 + x^2 ) ) T T^{-1} \psi =
T^{-1} (\sigma | \psi |^p
\psi )
\]
or
\[
( -\lambda + h^{-1} \partial_x^2 - k^2(1 + h x^2 ) \phi = \sigma h^{-p/4} |
\phi |^p \phi,
\]
where $\phi = T^{-1} \psi$.  Let us now multiply by $h$:
\[
( - \partial_x^2 + x^2 - E ) \phi = \sigma h^{q} | \phi |^p \phi,
\]
where
\[
E := \frac{1 - \lambda h^2}{h}
\]
and
\[
q: = 1 - p \frac{d-1}{4} = 1 - \frac{p}{4}.
\]
Observe the range restriction on $p$ is precisely so that
\[
0 < q \leq 1/2.
\]
We make a WKB type Ansatz, although in practice we will only take
two terms (more is possible if the nonlinearity is smooth as in the context of Theorems \ref{T:0} and \ref{T:1}):
\[
\phi =\phi_0 + h^q \phi_1, \,\,\, E =E_0 + h^q E_1.
\]
The first two equations are
\begin{align*}
& h^0: \quad ( - \partial_x^2 + x^2 - E_0 ) \phi_0 =  0, \\
& h^{q}: \quad ( - \partial_x^2 + x^2 - E_0 -h^q E_1) \phi_1 = E_1 \phi_0 +
\sigma | \phi_0 |^p \phi_0.
\end{align*}
Observe we have included the $h^q E_1 \phi_1$ term on the left hand
side.

The first equation is easy:
\[
\phi_0(x) = e^{-x^2/2}, \,\,\, E_0 = 1.
\]
For the second equation, we want to project away from $\phi_0$ which
is in the kernel of the operator on the left hand side.  That is,
choose $E_1$ satisfying
\[
\lll  E_1 \phi_0 + \sigma | \phi_0 |^p \phi_0 , \phi_0 \rrr = 0,
\]
so that the right hand side is in $\phi_0^\perp \subset L^2$.  
Then since the spectrum of the one-dimensional harmonic oscillator is
simple (and of the form $(2m+1)$, $m \in \ZZ$), the operator $(
- \partial_x^2 + x^2 - E_0 -h^q E_1)$ is invertible on
$\phi_0^\perp \subset L^2 $ with inverse bounded by $(2 - h^q)^{-1}$.
Hence for $h>0$ sufficiently small, we can find
$\phi_1 \in L^2$ satisfying the second equation above (here we have
used that $\phi_0$ is Schwartz with bounded $H^s$ norms).  Further,
since $\phi_0$ is Schwartz and strictly positive, so is $|\phi_0 |^p \phi_0$, so by
propagation of singularities, $\phi_1$ is also Schwartz.  In particular, both
$\phi_0$ and $\phi_1$ are rapidly decaying away from $x = 0$.

Let now $\psi(x) = T(\phi_0(x) + h^{q} \phi_1(x) )$, and observe
that by the above considerations, $\psi(x)$ is $\O(h^\infty)$ in any
seminorm, outside an $h^{1/2 - \delta}$ neighbourhood of $x = 0$.  Let
$u = e^{i t \lambda} e^{i k \theta} \psi(x)$ so that $\| u \|_{L^2(dx
  d \theta)} \sim 1$, and $u$ is $\O(h^\infty)$ outside an $h^{1/2 -
  \delta}$ neighbourhood of $x = 0$.  Furthermore, $u$ satisfies the
equation (again neglecting smaller terms)
\begin{align*}
(D_t + \tDelta) u & =  h^{-1} T  h T^{-1} (-\lambda +
\partial_x^2 - k^2 A^{-2}(x) ) T T^{-1} u\\
 & = h^{-1} T (\sigma h^{q} | T^{-1} u|^p T^{-1} u +  \O(h |
 T^{-1} u |^{p+1}) \\
& = \sigma | u |^{p} u + Q,
\end{align*}
where $Q$ satisfies the pointwise bound
\[
Q = \O( h^{q} | u |^{p+1}).
\]

We now let $\tilde{u}$ be the actual solution to \eqref{nls} with the
same initial profile:
\[
\begin{cases}
(D_t + \Delta ) \tilde{u} = \sigma | \tilde{u} |^p \tilde{u} \\
\tilde{u} |_{t = 0 } = e^{ik \theta} \psi(x).
\end{cases}
\]
Set $T_{h}=h^{p}$, then with the Strichartz estimates of Burq-G\'erard-Tzvetkov \cite{BGT-comp}  we prove (see Proposition \ref{properreur2}) that there exists $\nu>0$ so that 
\begin{equation*}
\| u - \tilde{u} \|_{L^\infty([0,T_h];L^{2}(M))} \leq Ch^{\nu},
\end{equation*} 
 and therefore we can compute:
\begin{align*}
\| \tilde{u} \|_{L^\infty([0,T_h]; L^2( M \setminus U_{h^{1/2-\delta
      }}))} & \leq \| u \|_{L^\infty([0,T]; L^2( M \setminus U_{h^{1/2-\delta}}))}  + \| \tilde{u} - u \|_{L^\infty([0,T]; L^2( M \setminus U_{h^{1/2-\delta}}))} \\
& = \O(h^{\infty}) + \O(h^{\nu})=\O(h^{\nu}),
\end{align*}
 which gives the result.

\begin{rema}
It is very important to point out that the sources of additional error
in this heuristic exposition have been ignored, and indeed, to apply a
similar idea in the general case, a microlocal reduction to a {\it
  tubular neighbourhood} of $\Gamma$ in cotangent space is employed.
The function $\phi_0$ is no longer so simple, and the nonlinearity $|
\phi_0 |^p \phi_0$ is no longer necessarily smooth.  Because of this,
the semiclassical wavefront sets are no longer necessarily compact, so
a cutoff in frequency results in a {\it fixed loss}.  
\end{rema}

\begin{rema}
We remark that the Strichartz estimates from \cite{BGT-comp} are sharp
on the  sphere $\SS^d$ for a particular Strichartz pair, but this
is not necessarily true on a generic Riemannian manifold.  See
\cite{BSS-Sch} for a thorough discussion of this fact.
\end{rema}


\section{Preliminaries}


\label{preliminaries}

\subsection{Symbol calculus on manifolds}
This section contains some basic definitions and results from semiclassical and microlocal 
analysis which we will be using throughout the paper.  This is essentially standard, but we 
include it for completeness.  The techniques presented have been
established in multiple references, including but not limited to the
previous works of the second author \cite{Chr-NC,Chr-QMNC},
Evans-Zworski \cite{EvZw}, Guillemin-Sternberg
\cite{GuSt-Geom,GuSt-Semi}, H\"ormander \cite{Hor-v1,Hor-v2},
Sj\"ostrand-Zworski \cite{SjZw-mono}, Taylor \cite{Tay-pdo},
and many more.

To begin we present results from \cite{EvZw}, Chapter $8$ and Appendix
$E$.  Let $X$ be a smooth, compact manifold.  We will be operating on half-densities,
\begin{eqnarray*}
u(x)|dx|^{\frac{1}{2}} \in \Ci\left(X, \Omega_X^{\frac{1}{2}}\right),
\end{eqnarray*}
with the informal change of variables formula
\begin{eqnarray*}
u(x)|dx|^{\frac{1}{2}} = v(y)|dy|^\half, \,\, \text{for}\,\, y = \kappa(x) 
\Leftrightarrow v(\kappa(x))|\kappa'(x)|^\half = u(x),
\end{eqnarray*}
where $|\kappa'(x)|$ has the canonical interpretation as the Jacobian $| \det (\p
\kappa) |$.  
By symbols on $X$ we mean the set
\begin{eqnarray*}
\lefteqn{\s^{k,m} \left(T^*X, \Omega_{T^*X}^\half \right):= } \\
& = &\left\{ a \in \Ci(T^*X \times (0,1], \Omega_{T^*X}^\half): \left| \partial_x^\alpha 
\partial_\xi^\beta a(x, \xi; h) \right| \leq C_{\alpha \beta}h^{-m} \langle \xi \rangle^{k - |\beta|} \right\}.
\end{eqnarray*}
Essentially this is interpreted as saying that on each coordinate
chart, $U_\alpha$, of $X$, $a \equiv a_\alpha$ where is $a_\alpha \in
\s^{k,m}$ in a standard symbol on $\reals^d$.
There is a corresponding class of pseudodifferential operators $\Psi_h^{k,m}(X, \Omega_X^\half)$ 
acting on half-densities defined by the local formula (Weyl calculus)
in $\reals^n$:\footnote{We use the semiclassical, or rescaled {\it
    unitary} Fourier transform throughout: \[\mathcal{F}_h u( \xi) =
  (2 \pi h)^{-d/2} \int e^{-i \langle x, \xi \rangle/h} u(x) d x.\]}
\begin{eqnarray*}
\Op_h^w(a)u(x) = \frac{1}{(2 \pi h)^n} \int \int a \left( \frac{x + y}{2}, \xi; h \right) 
e^{i \langle x-y, \xi \rangle / h }u(y) dy d\xi.
\end{eqnarray*}
We will occasionally use the shorthand notations $a^w := \Op_h^w(a)$ and $A:=\Op_h^w(a)$ when 
there is no ambiguity in doing so. 

We have the principal symbol map
\begin{eqnarray*}
\sigma_h : \Psi_h^{k,m} \left( X, \Omega_X^\half \right) \to \s^{k,m} \left/ \s^{k-1, m-1} 
\left(T^*X, \Omega_{T^*X}^\half \right) \right.,
\end{eqnarray*}
which gives the left inverse of $\Op_h^w$ in the sense that 
\begin{eqnarray*}
\sigma_h \circ \Op_h^w: \s^{k,m} \to \s^{k,m}/\s^{k-1, m-1} 
\end{eqnarray*}
is the natural projection.  Acting on half-densities in the Weyl calculus, the principal 
symbol is actually well-defined in $\s^{k,m} / \s^{k-2,m-2}$, that is, up to $\O(h^2)$ in 
$h$ (see, for example \cite{EvZw}, Appendix E).  

We will use the notion of semiclassical wave front sets for
pseudodifferential operators on manifolds, see H\"ormander
\cite{Hor-v2}, \cite{GuSt-Geom}.  
If $a \in \s^{k,m}(T^*X, \Omega_{T^*X}^\half)$, we define the singular support or essential support for $a$:
\begin{eqnarray*}
\esssupp_h a \subset T^*X \bigsqcup \SS^*X,
\end{eqnarray*}
where $\SS^*X = (T^*X \setminus \{0\}) / \reals_+$ is the cosphere bundle (quotient taken with 
respect to the usual multiplication in the fibers), and the union is
disjoint.  The $\esssupp_h a$ is defined using complements:
\begin{eqnarray*}
\lefteqn{\esssupp_h a := } \\
&& \complement \left\{ (x, \xi) \in T^*X : \exists \epsilon >0, \,\,\, (\partial_x^\alpha 
\partial_\xi^\beta a) (x', \xi') = \O(h^\infty), \,\,\, d(x, x') + |\xi - \xi'| < \epsilon \right\} \\
&& \bigcup \complement \{ (x, \xi) \in T^*X \setminus 0 : \exists \epsilon > 0, \,\,\, (\partial_x^\alpha 
\partial_\xi^\beta a) (x', \xi') = \O (h^\infty \langle \xi \rangle^{-\infty}),  \\
&& \quad \quad \quad d(x, x') + 1 / |\xi'| + | \xi/ |\xi| - \xi' /
|\xi'| | < \epsilon \} / \reals_+.
\end{eqnarray*}
We then define the wave front set of a pseudodifferential operator $A \in \Psi_h^{k,m}( X, \Omega_X^\half )$:
\begin{eqnarray*}
\WF(A) : = \esssupp_h(a), \,\,\, \text{for} \,\,\, A = \Op_h^w(a).
\end{eqnarray*}

If $u(h)$ is a family of distributional half-densities, $u \in \Ci( (0, 1]_h, \mathcal{D}'(X,
\Omega_X^\half))$, we say $u(h)$ is $h$-{\it tempered} if there is an $N_0$ so that $h^{N_0}u$ is bounded in $\mathcal{D}'(X, \Omega_X^\half)$.
If $u = u(h)$ is an $h$-tempered family of distributions, we can 
define the semiclassical wave front set of $u$, again by complement:
\begin{eqnarray*}
\lefteqn{\WF (u) := } \\
&& \complement \{(x, \xi) : \exists A \in \Psi_h^{0,0}, \,\, \text{with} \,\, \sigma_h(A)(x,\xi) \neq 0, \,\,  \\ 
&& \quad \text{and} \,\,Au \in h^\infty \Ci((0,1]_h, \Ci(X, \Omega_X^\half)) \}.
\end{eqnarray*}
For $A = \Op_h^w(a)$ and $B = \Op_h^w(b)$, $a \in \s^{k,m}$, $b \in \s^{k',m'}$ we have the composition 
formula (see, for example, \cite{DiSj})
\begin{eqnarray}
\label{Weyl-comp}
A \circ B = \Op_h^w \left( a \# b \right),
\end{eqnarray}
where
\begin{eqnarray}
\label{a-pound-b}
\s^{k + k', m+m'} \ni a \# b (x, \xi) := \left. e^{\frac{ih}{2} \omega(Dx, D_\xi; D_y, D_\eta)} 
\left( a(x, \xi) b(y, \eta) \right) \right|_{{x = y} \atop {\xi = \eta}} ,
\end{eqnarray}
with $\omega$ the standard symplectic form. 

We record some useful Lemmas.

\begin{lemma}
Suppose $(x_0, \xi_0) \notin \WF (u)$.  Then $\forall b \in \Ci_c(T^*
\reals^n)$ with support sufficiently close to $(x_0, \xi_0)$ we have
\[
b(x, hD) u = \O_{\mathcal{S}} ( h^\infty ).
\]
\end{lemma}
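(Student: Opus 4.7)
The plan is to invert the witness operator microlocally near $(x_{0},\xi_{0})$ via an elliptic parametrix, then compose with $\Op_h^w(b)$ to trade $\Op_h^w(b)u$ for the known smoothing quantity $Au$. Unpacking the hypothesis $(x_0,\xi_0)\notin \WF(u)$ produces $A=\Op_h^w(a)\in\Psi_h^{0,0}$ with $\sigma_h(A)(x_0,\xi_0)\neq 0$ and $Au\in h^\infty \Ci((0,1]_h,\Ci(X,\Omega_X^{1/2}))$. By continuity of the symbol, $|a|\geq c>0$ on some open neighbourhood $V$ of $(x_0,\xi_0)$, so $A$ is microlocally elliptic on $V$.

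Next I would construct a microlocal left parametrix for $A$ on $V$. Pick cutoffs $\chi,\tilde\chi\in \Ci_c(T^*\reals^n)$ supported in $V$, with $\tilde\chi\equiv 1$ on $\supp\chi$ and $\chi\equiv 1$ on a neighbourhood $U$ of $(x_0,\xi_0)$. Starting from $e_0=\tilde\chi/a$ and correcting order by order in $h$ using the Moyal expansion \eqref{a-pound-b}, a Borel summation produces $E\in\Psi_h^{0,0}$ satisfying
\[
EA=\Op_h^w(\chi)+R,
\]
where $R$ is smoothing with $\O(h^\infty)$ operator norm between any two Sobolev spaces.

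For $b\in\Ci_c(T^*\reals^n)$ with $\supp b\subset U$, every term in the asymptotic expansion of $b\#\chi-b$ vanishes, since all derivatives of $\chi-1$ are zero on $\supp b$; hence $\Op_h^w(b)\Op_h^w(\chi)=\Op_h^w(b)+\O_{\mathcal{S}\to\mathcal{S}}(h^\infty)$. Combining this with the parametrix identity yields
\[
\Op_h^w(b)=\Op_h^w(b)\,EA+\O_{\mathcal{S}\to\mathcal{S}}(h^\infty).
\]
Applying both sides to $u$, the first term equals $\Op_h^w(b)\,E\cdot Au$, which is $\O_\mathcal{S}(h^\infty)$ because $Au$ is already $\O(h^\infty)$ in every $\Ci$ seminorm and $\Op_h^w(b)E$ is smoothing with rapidly decaying kernel (owing to the compact phase-space support of $b$) uniformly in $h$. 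The remainder applied to the $h$-tempered $u$ is also $\O_\mathcal{S}(h^\infty)$.

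The main (modest) obstacle is verifying uniformity in $h$ of all Schwartz-seminorm estimates, rather than merely an $L^2$ or weaker bound; this is handled by the standard $L^2$-boundedness for $\Psi_h^{0,0}$ together with the observation that compactly phase-supported quantizations act as smoothing operators with kernels whose derivatives are controlled uniformly in $h$.
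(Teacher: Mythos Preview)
Your argument is correct. The paper does not actually prove this lemma; it simply remarks that the proof ``follows similarly to that of Theorem~8.9 in \cite{EvZw},'' and your elliptic-parametrix construction is precisely the standard argument one finds there.

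It is worth noting a small stylistic contrast with the proof the paper \emph{does} give for the closely related assertion~(ii) of the theorem that follows. There, rather than building a local parametrix on a neighbourhood $V$ where $|a|\geq c$, the paper perturbs $a$ by $i\chi$ with $\chi$ real and supported away from $(x_0,\xi_0)$, so that $a+i\chi$ is \emph{globally} elliptic; one then inverts $a^w+i\chi^w$ outright and uses disjointness of $\supp b$ and $\supp\chi$ to discard the $\chi^w$ contribution. Either route works for the present lemma: your local parametrix is slightly more constructive, while the global-ellipticity trick avoids the Borel summation step. Your final paragraph correctly identifies the only point requiring care (upgrading from $L^2$ to Schwartz control), and the remedy you sketch---commuting $x^\alpha D_x^\beta$ through $\Op_h^w(b)$, which preserves compact phase-space support modulo $\O(h^\infty)$, and then invoking $L^2$-boundedness---is the right one.
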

Here $ \O_{\mathcal{S}} ( h^\infty )$ means $\O(h^\infty)$ in any
Schwartz semi-norm.  The proof of this Lemma follows similarly to that
of Theorem $8.9$ in \cite{EvZw}.

\begin{theorem}
\label{T:wf-properties}
\begin{itemize}

\item[(i)] Suppose $a \in \s(m)$ and $u(h)$ is $h$-tempered.  Then
\[
\WF (a^w u) \subset \WF (u) \cap \esssupp_h (a).
\]

\item[(ii)] If $a \in \s(m)$ is real-valued, then also
\[
\WF ( u ) \subset \WF (a^w u) \cup \overline{\complement \{\esssupp_h a  \}}.
\]

\end{itemize}

\end{theorem}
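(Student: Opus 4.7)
The plan is to prove both inclusions using the Weyl composition formula \eqref{Weyl-comp}--\eqref{a-pound-b} together with the preceding Lemma, which characterizes $\WF(u)^c$ as the set of points admitting a neighborhood $V$ with $b(x,hD)u = \O_{\mathcal{S}}(h^\infty)$ for all $b \in \Ci_c$ supported in $V$. The key algebraic fact, visible from the expansion \eqref{a-pound-b}, is that $\esssupp_h(a\#b) \subset \esssupp_h(a) \cap \esssupp_h(b)$, since every term in the semiclassical expansion of the Moyal product is a product of derivatives of $a$ and $b$ evaluated on the diagonal.

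For (i), fix $(x_0,\xi_0)$ lying outside $\WF(u) \cap \esssupp_h(a)$; I want to show it lies outside $\WF(a^w u)$. Take a test symbol $b \in \Ci_c$ with $\sigma_h(b)(x_0,\xi_0) \neq 0$ and $\supp b$ chosen sufficiently small, and write $b^w a^w u = (b \# a)^w u$ via \eqref{Weyl-comp}. If $(x_0,\xi_0) \notin \esssupp_h(a)$, shrink $\supp b$ to be disjoint from $\esssupp_h(a)$; then $b \# a \in \s^{-\infty,-\infty}$ and $h$-temperedness of $u$ yields $(b\#a)^w u = \O(h^\infty)$. If instead $(x_0,\xi_0)\notin \WF(u)$, choose by the Lemma a larger cutoff $c \in \Ci_c$ equal to one on $\supp b$ with $c^w u = \O_{\mathcal{S}}(h^\infty)$, and use $(b\#a)\#c = b\#a + \O(h^\infty)$ in $\s^{-\infty,-\infty}$ to deduce $(b\#a)^w u = (b\#a)^w c^w u + \O(h^\infty) = \O(h^\infty)$.

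For (ii), I would construct a microlocal parametrix. Suppose $(x_0,\xi_0) \notin \WF(a^w u) \cup \overline{\complement\esssupp_h(a)}$, so $(x_0,\xi_0)$ lies in the interior of $\esssupp_h(a)$. Reality of $a$ ensures that, after isolating a sign, $a$ is bounded below on some neighborhood $V$ of $(x_0,\xi_0)$; then the standard Neumann-series recipe for constructing left parametrices in the Moyal algebra produces $b \in \s^{0,0}$ and $\chi \in \Ci_c$ supported in $V$, with $\sigma_h(\chi)(x_0,\xi_0) \neq 0$, satisfying $b \# a = \chi + r$ with $r \in \s^{-\infty,-\infty}$. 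Applying both sides to $u$ gives $\chi^w u = b^w(a^w u) + r^w u$; the first summand is $\O(h^\infty)$ near $(x_0,\xi_0)$ by part (i) applied with $a^w u$ in place of $u$, and the second is $\O(h^\infty)$ by $h$-temperedness, so $(x_0,\xi_0) \notin \WF(u)$.

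The main difficulty will be the parametrix construction in (ii), specifically passing from the hypothesis $(x_0,\xi_0) \in (\esssupp_h a)^\circ$ to a genuine ellipticity bound $|a| \gtrsim 1$ on a neighborhood, which is where the reality of $a$ enters; if $a$ changes sign arbitrarily close to $(x_0,\xi_0)$ a further localization argument is needed to isolate a subneighborhood on which $a$ is uniformly of one sign.
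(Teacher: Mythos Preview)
Your argument for (i) is correct and matches what the paper dismisses as ``straightforward.''

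For (ii) there is a genuine gap, and it is exactly the one you flag at the end. Membership in the interior of $\esssupp_h(a)$ does \emph{not} yield an ellipticity bound $|a|\gtrsim 1$ near $(x_0,\xi_0)$: the symbol may vanish there (take $a(x,\xi)=x_1$, which has full essential support but vanishes on $\{x_1=0\}$). Reality of $a$ does nothing for this---if $a(x_0,\xi_0)=0$ there is no sign to isolate---so your local parametrix cannot be built as written, and your suggested ``further localization'' does not repair it.

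The paper uses reality in a completely different way. Rather than seeking local ellipticity of $a$ itself, it manufactures \emph{global} ellipticity by a purely imaginary perturbation: starting from the hypothesis $a(x_0,\xi_0)\neq 0$ (which the paper takes as the working form of the assumption), one chooses a real $\chi$ with $\supp\chi$ disjoint from a neighborhood $U\ni(x_0,\xi_0)$ and with $|a+i\chi|^2=a^2+\chi^2\geq\gamma^2>0$ everywhere. This is precisely where reality enters: for complex $a$ the cross term would spoil the lower bound. Then $P=a^w+i\chi^w$ has a global approximate left inverse $c^w$ with $c^wP=\id+R^w$, $R^w=\O(h^\infty)$, and for any $b$ supported in $U$ one has $b^w\chi^w=\O(h^\infty)$ by disjoint essential supports, whence
\[
b^w u = b^w c^w P u - b^w R^w u = b^w c^w a^w u + i\,b^w c^w \chi^w u - b^w R^w u = \O(h^\infty).
\]
So the role of reality is the identity $|a+i\chi|^2=a^2+\chi^2$, not any sign analysis of $a$; this is the idea your proposal is missing.
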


\begin{proof}

Assertion $1$ is straightforward.  The proof of assertion $2$ is
standard, however we present it here so we can use it for the
analogous result for the blown-up wavefront set.

We will show if $a(x_0, \xi_0) \neq 0 $ and $a^w u =
\O_{L^2}(h^\infty)$ then there exists $b$, $b(x_0, \xi_0) \neq 0$ so
that $b^w u = \O_{L^2}(h^\infty)$.  There exists a neighbourhood $U
\ni (x_0, \xi_0)$, a real-valued function $\chi$, and a positive
number $\gamma >0$ such that $\supp \chi \cap U = \emptyset$ and 
\[
| a + i \chi| \geq \gamma \text{ everywhere}.
\]
Then $P = a^w + i \chi^w$ has an approximate left inverse $c^w$ so
that
\[
c^w P = \id + R^w,
\]
where $R^w = \O_{L^2 \to L^2}(h^\infty)$.  Choose $b \in \s$ so that
$\supp (b) \subset U$ and $b(x_0, \xi_0) \neq 0$.  Then $b^w \chi^w =
\O(h^\infty)$ as an operator on $L^2$.  Hence
\begin{align*}
b^w u = &  b^w c^w P u - b^w R^w u \\
= & b^w c^w a^w u + i b^w c^w \chi^w u - b^w R^w u \\
= & \O(h^\infty),
\end{align*}
where we have used the Weyl composition formula to conclude
$\esssupp_h( c \# \chi) \cap \supp b = \emptyset$.
\end{proof}

\subsection{Exotic symbol calculi}

Following ideas from \cite{SjZw-mono}, since rescaling often means dealing with symbols with bad decay
properties, we introduce weighted wave front sets as well.  Let us
first recall the non-classical symbol classes: 
\begin{eqnarray*}
\lefteqn{\s^{k,m}_{\delta, \gamma} \left(T^*X, \Omega_{T^*X}^\half \right):= } \\
& = &\left\{ a \in \Ci(T^*X \times (0,1], \Omega_{T^*X}^\half): \left| \partial_x^\alpha 
\partial_\xi^\beta a(x, \xi; h) \right| \leq C_{\alpha
\beta}h^{-\delta|\alpha| -\gamma | \beta|-m} \langle \xi \rangle^{k - |\beta|} \right\}.
\end{eqnarray*}
That is, symbols which lose $\delta$ powers of $h$ upon
differentiation in $x$ and $\gamma$ powers of $h$ upon differentiation
in $\xi$.  Note the simplest way to achieve this is to take a
symbol $a(x, \xi) \in \s$ and rescale $(x, \xi) \mapsto (h^{-\delta} x, h
^{-\gamma} \xi)$, which then localizes on a scale $h^{\delta + \gamma}$ in
phase space.  We thus make the restriction that $0 \leq \delta, \gamma \leq
1$, $0 \leq \delta + \gamma \leq 1$, and to gain powers of $h$ by integrations by parts, we usually
also require $\delta + \gamma < 1$.  We can define wavefront sets using
$\s_{\delta, \gamma} = \s^{0,0}_{\delta,\gamma}$ symbols, but the localization of the wavefront sets is stronger.

\begin{definition}

 If $u(h)$ is $h$-tempered and $0 \leq \delta , \gamma \leq 1$, $0
 \leq \delta + \gamma \leq 1$, 
\begin{align*}
\dWF(u) = & \complement \{ (x_0 ,\xi_0) : \exists a \in
\s_{\delta,\gamma} \cap \Ci_c, \,  a(x_0, \xi_0)
\neq 0, a(x,\xi) = \tilde{a}(h^{-\delta}x, h^{-\gamma}\xi) \\
& \text{ for some } \tilde{a}
\in \s \text{ and }\, a^w u = \O_{L^2}(h^\infty) \}.
\end{align*}

\end{definition}

We have the following immediate corollary.

\begin{corollary}
\label{C:dWF-cor}

If $a \in \s_{\delta, \gamma}(m)$, $0 \leq \delta + \gamma< 1$, and $a$ is real-valued, then 
\[
\dWF ( u ) \subset \dWF (a^w u) \cup \overline{\complement \{ \esssupp_h a  \}}.
\]

\end{corollary}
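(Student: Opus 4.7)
The plan is to mimic the proof of Theorem \ref{T:wf-properties}(ii) verbatim, but with every appeal to the standard Weyl calculus replaced by its counterpart in $\s_{\delta,\gamma}$. The hypothesis $\delta + \gamma < 1$ is exactly what guarantees that the composition formula \eqref{Weyl-comp}--\eqref{a-pound-b} retains an asymptotic expansion with remainders gaining powers of $h^{1-\delta-\gamma} > 0$, so that $\s_{\delta,\gamma} \# \s_{\delta,\gamma} \subset \s_{\delta,\gamma}$ and so that elliptic elements of this class admit parametrices in the same class with $O(h^\infty)$ error (as in \cite{SjZw-mono}). I would also need every symbol constructed in the proof to be of the rescaled form $\tilde b(h^{-\delta}x, h^{-\gamma}\xi)$, a property preserved by composition and parametrix construction within the exotic calculus.

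Concretely, I would start by unwinding the hypothesis. If $(x_0, \xi_0)$ lies outside $\dWF(a^w u) \cup \overline{\complement\{\esssupp_h a\}}$, then on the one hand there is a neighborhood $V$ of $(x_0, \xi_0)$ on which $a$ is bounded below (elliptic in $\s_{\delta,\gamma}$), and on the other there is $b_1 \in \s_{\delta,\gamma}\cap \Ci_c$ of rescaled form with $b_1(x_0, \xi_0) \neq 0$ and $b_1^w(a^w u) = O_{L^2}(h^\infty)$. Following the model of Theorem \ref{T:wf-properties}(ii), I would then choose a real-valued $\chi \in \s_{\delta,\gamma}$, also of rescaled form, whose support avoids a slightly smaller neighborhood $V' \subset V$ of $(x_0, \xi_0)$ and for which $|a + i\chi|^2 = a^2 + \chi^2 \geq \gamma_0^2 > 0$ everywhere. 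Then $P := a^w + i\chi^w$ is globally elliptic in the exotic calculus, producing $q \in \s_{\delta,\gamma}$ with $q^w P = \id + R^w$ and $R^w = O_{L^2\to L^2}(h^\infty)$.

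The closing step is to pick $b \in \s_{\delta,\gamma}$ of rescaled form, supported in $V'$ and nonvanishing at $(x_0,\xi_0)$. Since $\supp b$ is disjoint from $\supp \chi$, the exotic composition formula (valid because $\delta + \gamma < 1$) gives $b^w \chi^w = O_{L^2\to L^2}(h^\infty)$. Since $b_1$ is elliptic on $\supp b$, one more exotic parametrix step yields $\tilde B \in \s_{\delta,\gamma}$ with $b^w q^w = \tilde B^w b_1^w + O_{L^2\to L^2}(h^\infty)$. Then
\[
b^w u \,=\, b^w q^w a^w u \,+\, i\, b^w q^w \chi^w u \,-\, b^w R^w u,
\]
and each of the three terms is $O_{L^2}(h^\infty)$: the first because $b^w q^w a^w u = \tilde B^w(b_1^w a^w u) + O(h^\infty)$ and $b_1^w a^w u$ is already $O(h^\infty)$ by hypothesis, the second by disjointness of supports, the third by construction of $R$. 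Hence $b^w u = O_{L^2}(h^\infty)$, certifying $(x_0, \xi_0) \notin \dWF(u)$.

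The main obstacle, and where most of the work lies, is the bookkeeping needed to keep every symbol constructed (the cutoff $\chi$, the parametrix $q$, the secondary parametrix $\tilde B$) of the required rescaled form $\tilde b(h^{-\delta}x, h^{-\gamma}\xi)$. This is also the place where the strict inequality $\delta + \gamma < 1$ is indispensable, since it is what makes the asymptotic expansions in the exotic composition formula actually converge in positive powers of $h$ and thereby makes the exotic calculus behave like the standard one.
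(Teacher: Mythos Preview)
Your proposal is correct and follows essentially the same approach as the paper: repeat the proof of Theorem~\ref{T:wf-properties}(ii) with all symbols taken in $\s_{\delta,\gamma}$, using $\delta+\gamma<1$ to ensure the exotic calculus has a parametrix construction. The only minor methodological difference is that the paper suggests reducing to the standard $\s_{\delta'}$ calculus (with $\delta'=(\delta+\gamma)/2$) via the rescaling operators of \S\ref{S:prelim-rescaling}, whereas you work directly in $\s_{\delta,\gamma}$ and track the rescaled form by hand; both routes lead to the same place.
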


The proof is exactly the same as in Theorem \ref{T:wf-properties} only
all symbols must scale the same, so they must be in $\s_{\delta,
  \gamma}$.  
In
order to conclude the existence of approximate inverses, we need the
restriction $\delta + \gamma < 1$, and the rescaling
operators from \S \ref{S:prelim-rescaling} which can be used to reduce to the familiar $h^{-\delta'}$
calculus, where $\delta' = (\delta + \gamma)/2$, by replacing $h$ with
$h^{\delta -\gamma}$.

\subsection{Rescaling operators}
\label{S:prelim-rescaling}

We would like to introduce $h$-dependent rescaling operators.  The rescaling operators should be unitary
with respect to natural Schr\"odinger energy norms, namely the
homogeneous $\dH^s$ spaces.  Let us recall in $\reals^n$, the $\dH^s$
space is defined as the completion of $\s$ with respect to the
topology induced by the inner product
\[
\lll u , v \rrr_{\dH^s} = \int_{\reals^n} | \xi |^{2s} \hat{u}(\xi)
\overline{\hat{v}}(\xi) d \xi,
\]
where as usual $\hat{u}$ denotes the Fourier transform.  The $\dH^0$
norm is just the $L^2$ norm, and the $\dH^1$ norm is 
\[
\| u \|_{\dH^1} \simeq \| |\nabla u | \|_{L^2}.
\]
The purpose in taking the homogeneous norms instead of the usual
Sobolev norms is to make the rescaling operators in the next Lemma
unitary.

\begin{lemma}
\label{L:T-lemma}
For any $s \in \reals$, $h>0$, the linear operator $T_{h,s}$ defined by
\[
T_{h,s} w(x) = h^{s/2-n/4}w(h^{-1/2} x)
\]
is unitary on $\dH^s( \reals^n)$, and for any $r \in \reals$, 
\begin{align*}
\| T_{h,s} w \|_{\dH^r} & = h^{(s-r)/2} \| w \|_{\dH^r}, \\
\| T_{h,0} w \|_{\dH^s} & = h^{-s/2} \| w \|_{\dH^s}.
\end{align*}

Moreover, for any pseudodifferential
operator $P(x, D)$ in the Weyl calculus,
\[
T_{h,s}^{-1} P(x,D) T_{h,s} = P(h^{1/2} x, h^{-1/2} D).
\]
\end{lemma}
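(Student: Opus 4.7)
The plan is to verify everything by direct computation: the norm identities via the Plancherel/Parseval picture on the Fourier side, and the conjugation identity via change of variables in the Weyl integral.

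First, I would compute the Fourier transform of $T_{h,s}w$. The definition gives
\[
\widehat{T_{h,s}w}(\xi) = h^{s/2-n/4}\int e^{-i\langle x,\xi\rangle} w(h^{-1/2}x)\,dx = h^{s/2+n/4}\,\hat{w}(h^{1/2}\xi),
\]
after the substitution $y=h^{-1/2}x$. Inserting this into the $\dH^r$ inner product and then substituting $\eta=h^{1/2}\xi$ absorbs all the prefactors and produces the homogeneity factor $h^{s-r}$; this yields the scaling identity $\|T_{h,s}w\|_{\dH^r}=h^{(s-r)/2}\|w\|_{\dH^r}$. Unitarity on $\dH^s$ is then the special case $r=s$, and the third line is the specialization $s=0$, $r=s$.

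For the conjugation identity, I would start from the Weyl quantization formula
\[
P(x,D)v(x) = \frac{1}{(2\pi)^n}\iint p\!\left(\tfrac{x+y}{2},\xi\right) e^{i\langle x-y,\xi\rangle} v(y)\,dy\,d\xi,
\]
apply it to $v=T_{h,s}u$, and then apply $T_{h,s}^{-1}$. The overall multiplicative prefactors $h^{s/2-n/4}$ and $h^{-s/2+n/4}$ cancel, so the result is independent of $s$. Then I would make the two substitutions $y=h^{1/2}y'$ in the $y$-integral and $\xi = h^{-1/2}\eta$ in the $\xi$-integral; the Jacobians $h^{n/2}$ and $h^{-n/2}$ cancel, and the phase becomes $e^{i\langle x-y',\eta\rangle}$, leaving precisely the Weyl quantization of the symbol $p(h^{1/2}x,h^{-1/2}\xi)$.

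None of the steps is a serious obstacle: the Fourier-side computation is a two-line substitution, and the conjugation identity is a matter of tracking Jacobians through two linear changes of variable. The only mild care needed is to verify, via density of $\mathcal{S}$ in $\dH^s$, that $T_{h,s}$ extends continuously to the completed homogeneous Sobolev spaces (which follows from the scaling identity itself), and to note that the conjugation identity should be interpreted first on Schwartz class and then extended by standard pseudodifferential continuity.
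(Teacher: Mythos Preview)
Your proposal is correct and follows essentially the same approach as the paper: both verify the norm identities by a Fourier-side substitution (the paper checks the adjoint identity $\langle u, T_{h,s}w\rangle_{\dH^s}=\langle T_{h,s}^{-1}u,w\rangle_{\dH^s}$ while you compute $\|T_{h,s}w\|_{\dH^r}$ directly, which is the same change of variables), and both derive the conjugation identity by the same pair of linear substitutions in the Weyl integral.
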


\begin{rema}
Observe that for this lemma, the usual assumption that $h$ be small is not
necessary.  
\end{rema}

\begin{proof}
The proof is simple rescaling, but we include it here for the
convenience of the reader.  To check unitarity, we just change variables:
\begin{align*}
\lll u, T_{h,s} w \rrr_{\dH^s} & = \lll | \xi |^s \hat{u}, | \xi |^s
\widehat{ T_{h,s} w } \rrr \\
& = h^{s/2+n/4} \int | \xi |^{2s} \hat{u}( \xi )
\overline{\hat{w}(h^{1/2} \xi)} d \xi \\
& = h^{-n/4-s/2} \int | \xi |^{2s} \hat{u}( h^{-1/2} \xi )
\overline{\hat{w}(\xi)} d \xi \\
& = h^{n/4 -s/2} \int | \xi |^{2s} \widehat{ u(h^{1/2}\cdot)}(\xi)
\overline{\hat{w}(\xi)} d \xi \\
& = \lll T_{h,s}^{-1} u, w \rrr_{\dH^s}.
\end{align*}

To check the conjugation property, we again compute
\begin{align*}
T_{h,s}^{-1} P(x,D) T_{h,s} w(x) & = T_{h,s}^{-1} (2 \pi )^{-n}
h^{s/2-n/4} \int
e^{i\lll x-y , \xi \rrr} P(\frac{x+y}{2}, \xi) w(h^{-1/2} y) dy d \xi
\\
& = T_{h,s}^{-1} (2 \pi )^{-n}
h^{s/2+n/4} \int e^{i \lll x-h^{1/2} y, \xi \rrr } P(\frac{x + h^{1/2}
  y}{2}, \xi ) w(y) d y d \xi \\
& = T_{h,s}^{-1} (2 \pi )^{-n}
h^{s/2-n/4} \int e^{i \lll x-h^{1/2} y, h^{-1/2} \xi \rrr } P(\frac{x + h^{1/2}
  y}{2}, h^{-1/2} \xi )w(y) d y d \xi \\
& = (2 \pi )^{-n}\int e^{i \lll x- y,  \xi \rrr } P(\frac{h^{1/2}  ( x + 
  y)}{2}, h^{-1/2} \xi ) w(y) d y d \xi \\
& = P(h^{1/2} x, h^{-1/2} D) w(x).
\end{align*}

\end{proof}

The purpose of using the rescaling operators $T_{h,s}$ is that if $u
\in \dH^s$ has $h$-wavefront set
\[
\dWF (u) \subset \{ | x | \leq \alpha(h) \text{ and } | \xi | \leq
\beta(h) \},
\]
where, according to the uncertainty principle, $\alpha(h) \beta(h)
\geq h$, then $T_{h,s} u$ has $h$-wavefront set
\[
\sWF_{h, \delta-1/2, \gamma + 1/2} (T_{h,s} u) \subset \{ | x | \leq \alpha(h) h^{1/2} \text{ and } | \xi | \leq
\beta(h) h^{-1/2} \},
\]
provided, of course, that $\delta \geq 1/2$.  
To see this, we just observe that for any $\psi \in
\Ci_c(\reals^{2n})$, $\psi \equiv 0$ on $\{ | x | \leq 1, | \xi | \leq
1 \}$, we have
\[
\psi( x/\alpha(h) , D / \beta(h) ) u = \O(h^\infty ) \| u \|_{L^2},
\]
or any other semi-norm, and hence
\begin{align*}
\psi( h^{-1/2} x/\alpha(h) , h^{1/2} D / \beta(h) ) T_{h,s} u & = 
T_{h,s} \psi( x/\alpha(h) , D / \beta(h) ) T_{h,s}^{-1} T_{h,s} u \\
& = T_{h,s} \psi( x/\alpha(h) , D / \beta(h) ) u \\
& = h^{s/2} \O(h^\infty) \| u \|_{L^2} \\
& = \O(h^\infty) \| u \|_{L^2}.
\end{align*}
Finally, we note that the symbol of the operator on the left-hand side
is zero on the set  
\[
\{ | x | \leq h^{1/2} \alpha(h), \text{ and } | \xi | \leq h^{-1/2}
\beta(h) \},
\]
and any such symbol in $\s_{\delta - 1/2, \gamma + 1/2}$, elliptic at a
point outside this set, can be locally 
obtained in this fashion.

\section{Geometry near an elliptic geodesic}

Suppose $\Gamma$ is a geodesic in a $n+1$ dimensional Riemannian manifold.
Following \cite[\S 2.1]{Mazzeo-Pacard} (cf. \cite{Tubes}) we fix an arclength parametrization $\gamma(t)$ of $\Gamma$, and a parallel orthonormal frame $E_1, \ldots, E_n$ for the normal bundle $N\Gamma$ to $\Gamma$ in $M$. This determines a coordinate system
\begin{equation*}
	x= (x_0, x_1, \ldots, x_n) \mapsto \exp_{\gamma(x_0)}(x_1 E_1 + \ldots x_n E_n) = F(x).
\end{equation*}
We write $x' = (x_1, \ldots, x_n)$ and use indices $j,k, \ell \in \{ 1, \ldots, n\}$, $\alpha, \beta, \delta \in \{ 0, \ldots, n \}$.
We also use $X_\alpha = F_*(\partial_{x_{\alpha}})$.

Note that $r(x) = \sqrt{x_1^2 + \ldots x_n^2}$ is the geodesic distance from $x$ to $\Gamma$ and $\partial_r$ is the unit normal to the geodesic tubes $\{ x: d(x, \Gamma)= \text{cst} \}$.
Let $p = F(x_0, 0)$, $q = F(x_0, x')$, and $r = r(q) = d(p,q)$ then we
have \cite[Proposition 2.1]{Mazzeo-Pacard}, which states
\begin{equation}\label{LocalMet}
\begin{split}
	g_{jk}(q) &= \delta_{ij} + \frac13 g(R(X_s, X_j) X_\ell, X_k
        )_p x_s x_\ell + \mathcal{O}(r^3) , \\
	g_{0k}(q) &= \mathcal{O}(r^2) , \\
	g_{00}(q) &= 1 - g(R(X_k, X_0) X_0, X_{\ell} )_p x_k x_\ell +
        \mathcal{O}(r^3) , \\
	\Gamma_{\alpha\beta}^{\delta} &= \mathcal{O}(r) , \\
	\Gamma_{00}^{k} &= -\sum_{j=1}^n g(R(X_k, X_0)X_j, X_0)_p x_j
        + \mathcal{O}(r^2) ,
\end{split}
\end{equation}
where
$\Gamma^\delta_{\alpha\beta} = \frac{1}{2} g^{\delta\eta} ( X_\alpha g_{\eta\beta} + X_{\beta} g_{\alpha\eta} - X_\eta g_{\alpha\beta}).$

In these coordinates, the Laplacian has the form
\begin{equation}\label{LocalLap}
\begin{split}
	\Delta_g & :=  \frac{1}{\sqrt{\det g}} \text{div} (\sqrt{\det g} g^{-1} \nabla) \\
	&= g^{jk} X_j X_k - g^{jk}\Gamma_{jk}^\ell X_\ell \\
	&= g^{00} X_0X_0 + 2 g^{k0} X_kX_0 - g^{jk}\Gamma^0_{jk} X_0 
	+2 g^{0j}\Gamma_{0j}^k X_k + \Delta_{\Gamma^{\perp}} ,
\end{split}
\end{equation}
where $\Delta_{\Gamma^{\perp}}$ is the Laplacian in the directions transverse to $\Gamma$.

Denote the geodesic flow on $SM$, the unit tangent bundle, by $\phi_t$.
Let $\gamma(0) = p \in \Gamma$ and $\zeta = \gamma'(0) \in SM$.
Associated to $\Gamma$ is a periodic orbit $\phi_t \zeta$ of the geodesic flow on $SM$. 
This orbit $\phi_t \zeta$ is called {\em stable} if, whenever $\mathcal{V}$ is a tubular neighbourhood of $\phi_t \zeta$, there is a neighbourhood $\mathcal{U}$ of $\zeta$ such that $\zeta' \in \mathcal{U}$ implies $\phi_t\zeta' \subseteq \mathcal{V}$.
Given a hypersurface $\Sigma$ in $SM$ containing $\zeta$ and transverse to $\phi_t \zeta$, we can define a {\em Poincar\'e map} $\mathcal{P}$ near $\zeta$, by assigning to each $\zeta'$ the next point on $\phi_t(\zeta')$ that lies in $\Sigma$.
Two Poincar\'e maps of $\phi_t\zeta$ are locally conjugate and hence the eigenvalues of $d\mathcal{P}$ at $\zeta$ are invariants of the periodic orbit $\phi_t\zeta$.

The Levi-Civita connection allows us to identify $TTM$ with the sum of a horizontal space and a vertical space, each of which can be identified with $TM$.
Thus we can choose $\Sigma$ so that $T_\zeta \Sigma$ is equal to $E \oplus E$, where $E$ is the orthogonal complement of $\zeta$ in $T_pM$.
The linearized Poincar\'e map is then given by 
\begin{equation*}
	\xymatrix @R=1pt 
	{ E \oplus E \ar[r]^{d\mathcal{P}} & E \oplus E , \\ 
	(V, W) \ar@{|->}[r] & (J( \mathrm{length}(\gamma) ), \nabla_{X_0} J( \mathrm{length}(\gamma) ) ) },
\end{equation*}
where $J$ is the unique Jacobi field along $\gamma$ with $J(0) = V$ and $\nabla_{X_0} J(0) = W$, i.e., $J$ solves
\begin{equation*}
	\begin{cases}
		\nabla_{X_0} \nabla_{X_0} J + R(J, X_0) X_0 = 0 \\
		J(0) = V, \quad \nabla_{X_0} J(0) = W.
	\end{cases}
\end{equation*}
The linearized map $d\mathcal{P} = \left.d\mathcal{P}\right|_{\zeta}$ preserves the symplectic form on $E \oplus E$,
\begin{equation*}
	\omega( (V_1, W_1), (V_2, W_2) ) = \langle V_1, W_2 \rangle - \langle W_1, V_2 \rangle, 
\end{equation*}
and so its eigenvalues come in complex conjugate pairs. 
We say that $\Gamma$ is a {\em non-degenerate elliptic closed geodesic} if the eigenvalues of $d\mathcal{P}$ have the form $\{ e^{\pm i \alpha_j} : j = 1, \ldots, n \}$ where each $\alpha_j$ is real and the set $\{ \alpha_1, \ldots, \alpha_n, \pi \}$ is linearly independent over $\mathbb{Q}$.

From \eqref{LocalMet}, the Hessian of the function $g_{00}(q)$ as a function of $x'$ is (minus) the transformation appearing in the Jacobi operator
\begin{equation*}
	E \ni V \mapsto R(V, X_0) X_0 \in E.
\end{equation*}
Notice that if $V$ is a normalized eigenvector for this operator, with eigenvalue $\lambda$, then
\begin{equation*}
	\mathrm{sec}_p(X_0, V) = g_p(R(V, X_0)X_0, V) = \lambda g_p(V, V) = \lambda.
\end{equation*}
The very useful property
\begin{equation}\label{Assumption}
	{ p \in \Gamma, V \in E \implies \mathrm{sec}_p(X_0, V) > 0 },
\end{equation}
holds for any elliptic closed geodesic.

\begin{rema}
One can verify \eqref{Assumption} by means of the Birkhoff normal form (see \cite[\S10.3]{Zel-survey}).
Indeed, if one of the sectional curvatures were negative, then the Birkhoff normal form of the linearized Poincare map (in $T^*M$) must have an eigenvalue off the unit circle.  If so, then there is at least one nearby orbit which does not stay nearby, hence the periodic geodesic is not elliptic.  Similarly, if one of the curvatures vanishes, then the linearized Poincare map has a zero eigenvalue, and hence the logs of the eigenvalues are not independent from $\pi$ over the rationals (in other words, the Poincare map is degenerate, and not even symplectic).  
\end{rema}

\section{Compact Solitons: The nonlinear Ansatz}

We are interested in finding quasimodes for the non-linear Schr\"odinger equation
\begin{eqnarray*}
	-\Delta_g u = \lambda u - \sigma | u |^{p} u,
\end{eqnarray*}
where $\sigma = \pm 1$ determines if we are in the focussing or
defocussing case.  We will construct $u$ approximately solving this
equation with $u$ concentrated near $\Gamma$ in a sense to be made
precise below.  

We take as Ansatz 
\begin{equation*}
	F(x, h) = e^{ix_0/h} f(x, h), \,\,\, h^{-1} = \frac{2 \pi N}{L},
\end{equation*}
where $L>0$ is the period of $\Gamma$, 
and assume for the time being that the function $f$ is concentrated in
an $h$-dependent neighbourhood of $\Gamma$.  We are going to employ
a semiclassical reduction, and we are interested in fast oscillations
($h \to 0$), so we assume $\sWF_{h,1/2-\delta, 0} f(x,h) \subset \{ | x'| \leq \epsilon
h^{1/2 - \delta }, | \xi' | \leq \epsilon \}$ for
some $\epsilon, \delta >0$\footnote{The reason for the weaker
  concentration in frequency $| \xi'|$ is that the nonlinearity forces
working with non-smooth functions, so some decay at infinity in
frequency is lost.}.  The localization property of $f$ as constructed
later will be verified
in Section \ref{S:quasimodes}.  
We compute from \eqref{LocalLap}, with $\Delta$ the non-positive Laplacian,
\begin{multline}
\label{E:DeltaF}
	\Delta F
	= e^{ix_0/h} \left[
	g^{00} \left( -\frac1{h^2} f + \frac{2i}h X_0 f + X_0X_0 f \right)
	+ 2 g^{k0}  X_k \left( \frac ih f + X_0 f \right)
	\right. \\ \left.
	- g^{kj} \Gamma^0_{kj}  \left( \frac ih f + X_0 f \right)
	+2 g^{0j}\Gamma_{0j}^k  X_k f  +  \Delta_{\Gamma^{\perp}}f \right].
\end{multline}

\begin{rema}
One may initially be inclined to use the Ansatz of the original
Gaussian beam from Ralston
\cite{Ral-beam}, which is
\begin{eqnarray*}
e^{i \psi (x)/h} (a_0 + a_1 h + \dots + a_N h^N),
\end{eqnarray*}
the standard geometric optics quasimode construction.  
After all, Ralston is able to make very nice use of the geometry to construct a phase function of the form $i/h (x_0 + \tfrac12 x' B(x_0) x')$
with $\Im B(x_0) >0$ (for $x_0 \neq 0,$ vanishing otherwise).
In such a regime, however, the non-linear term in the Schr\"odinger equation \eqref{nls}
vanishes to infinite order in $h.$
Thus while such a solution always exists, it fails to capture the effects of the nonlinearity that we are interested in.
\end{rema}

We analyze \eqref{E:DeltaF} by applying the operator $T_{h,s}^{-1}$ in
the variables transversal to $\Gamma$.  We normalize everything in the
$L^2$ sense, so we take here $s=0$.  Let $z = h^{-1/2}x'$ and set
$v(x_0, z, h) = T_{h,0}^{-1} f(x_0, z, h) = h^{n/4 } f(x_0, h^{1/2}z, h)$.
Notice that the distance to the geodesic $r = |x'|$ is scaled to $\rho
= |z| = h^{-1/2} r$, as described above.  In particular, now
\begin{equation}
\label{E:v-WF}
\sWF_{h,0, 1/2} v \subset \{ | z | \leq h^{-\delta} \epsilon, \,\, | \zeta | \leq
h^{1/2} \epsilon \},
\end{equation}
if $\zeta$ is the (semiclassical) Fourier dual to $z$.  
We conjugate \eqref{E:DeltaF} to get 
\begin{multline*}
	T_{h,0}^{-1} \Delta T_{h,0} T_{h,0}^{-1} F
	= e^{ix_0/h} \left[
	g^{00} \left( -\frac1{h^2} v + \frac{2i}h X_0 v + X_0X_0 v \right)
	+ 2 g^{k0}  h^{-1/2} \partial_{z_k} \left( \frac ih v + X_0 v \right)
	\right. \\ \left.
	- g^{kj} \Gamma^0_{kj}  \left( \frac ih v + X_0 v \right)
	+2 g^{0j}\Gamma_{0j}^k  h^{-1/2}\partial_{z_k} v  +  h^{-1} \Delta_{\Gamma^{\perp}}v \right],
\end{multline*}
where the metric components and Christoffel symbols are evaluated at
$(x_0, h^{1/2}z )$.  
On the other hand, from \eqref{LocalMet}, expanding in Taylor
polynomials, we know that
\begin{equation}
\begin{split}
	g^{00} (x) &= 1 + R_2(x) + R_3(x) + R_4(x) + \mathcal{O}(r^5)
        \notag \\
	& = 1 + hR_2(z) + h^{3/2}R_3(z) + h^2 R_4(z) + \mathcal{O}(h^{5/2}\rho^5) \\
	g^{0k}(x) &= h \tilde{g}^{0k}_2(z) + \O(h^{3/2} \rho^3) \notag
        \\
	g^{jk} (x) &= \delta_{jk} +
        \mathcal{O}(h\rho^2) \label{geomexps} \\
	\Gamma^0_{jk}(x) &= h^{1/2}\widetilde\Gamma^0_{jk1} (z)+ h
        \widetilde\Gamma^0_{jk2}(z) + \mathcal{O}(h^{3/2} \rho^3)
        \notag \\
        \Gamma^k_{0j}(x) & = h^{1/2} \widetilde\Gamma^k_{0j1} (z) +
        \O(h \rho^2), \notag
\end{split}
\end{equation}
for some smooth functions $R_\ell$, $\tilde{g}^{0k}_\ell$,
$\widetilde{\Gamma}^\alpha_{jk\ell}$ homogeneous of degree $\ell$ respectively.  Hence
\begin{multline}
\label{E:TDelta}
	T_{h,0}^{-1} \Delta T_{h,0} T_{h,0}^{-1} F
	= e^{ix_0/h} \left[
	- \frac 1{h^2}v - \frac1h R_2(z)v - \frac1{h^{1/2}} R_3(z) v -
        R_4(z) v
	+ \frac{2i}h X_0 v
	+ \frac {2i}{h^{1/2}} \tilde{g}^{k0}_2 (z)\partial_{z_k}v
	\right. \\ \left.
	- \frac {i}{h^{1/2}} g^{jk} \widetilde\Gamma^0_{jk1} (z) v
        -i g^{jk} \widetilde\Gamma^0_{jk2} (z) v
	+ \frac1h \Delta_{\Gamma^{\perp}}v \right] + Pv,
\end{multline}
where $P$ contains the remaining terms from the Taylor expansion.  Let us record the following
Lemma.
\begin{lemma}
\label{L:P-error-est}
The operator $P$ has the following expansion:
\begin{eqnarray*}
P & =&  \O(h^{1/2} | z |^5) + X_0 X_0 + \O(h | z |^2 )
h^{-1/2} \partial_{z_k} X_0 + \O(|z|^3) \partial_{z_k} + \O(h^{1/2} |
z |^3) \\
&& + \O(h^{1/2} | z |) X_0 + \O(h^{3/2} | z |^{3} )
h^{-1/2} \partial_{z_k}.
\end{eqnarray*}
In particular, if $v$ satisfies \eqref{E:v-WF}, then 
\[
\| P v \|_{L^2} \leq C \| v \|_{L^2} + C \| X_0 X_0 v \|_{L^2} + C
h^{1/2-\delta} \| X_0 v \|_{L^2}.
\]

\end{lemma}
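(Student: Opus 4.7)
The proof has two parts: first, establishing the claimed schematic form of $P$; second, deriving the $L^2$ bound from the wavefront conditions. For the first part, the plan is to substitute the Taylor expansions \eqref{geomexps} for each of $g^{00}, g^{0k}, g^{jk}, \Gamma^0_{jk}$, and $\Gamma^k_{0j}$ into the conjugated Laplacian following \eqref{LocalLap} (with $X_k$ becoming $h^{-1/2}\partial_{z_k}$), and to separate out those contributions which are the most singular in $h$ — these make up the explicit bracket in \eqref{E:TDelta}. Everything else lands in $P$: the subleading and remainder terms of \eqref{geomexps} acting on each of $v$, $X_0 v$, $X_0 X_0 v$, and $h^{-1/2}\partial_{z_k}v$, together with the piece $X_0 X_0 v$ itself (coming from the ``$1$'' in the Taylor expansion of $g^{00}$ multiplying $X_0 X_0 v$, which is not already displayed in the bracket). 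Each resulting monomial can then be matched to one of the listed templates by tracking the degree of homogeneity of the Taylor coefficient, the $h$-power produced by the rescaling $x' = h^{1/2}z$, and the $h$-powers from the factors $-h^{-2}$, $\frac{2i}{h}$, and $h^{-1/2}$ sitting on the derivative side.

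For the $L^2$ bound, the plan is to use the wavefront assumption \eqref{E:v-WF} in two distinct ways. On the essential support of $v$ one has $|z|\lesssim h^{-\delta}$, so polynomial weights $|z|^\alpha$ contribute at most $h^{-\alpha\delta}$ (modulo $\O(h^\infty)$). Separately, the frequency localization $|\zeta|\lesssim h^{1/2}$ yields the Fourier-multiplier estimate $\|h^{-1/2}\partial_{z_k}v\|_{L^2}\lesssim \|v\|_{L^2}$ and similarly $\|h^{-1/2}\partial_{z_k}X_0 v\|_{L^2}\lesssim \|X_0 v\|_{L^2}$ up to $\O(h^\infty)$ errors. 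Applying these two gains termwise: the pure multiplication terms $\O(h^{1/2}|z|^5)$ and $\O(h^{1/2}|z|^3)$ carry an $h^{1/2}$ prefactor that easily dominates $h^{-5\delta}$ or $h^{-3\delta}$ for $\delta$ small, so they contribute to $C\|v\|_{L^2}$; the $X_0 X_0$ term is absorbed directly into $C\|X_0 X_0 v\|_{L^2}$; the terms $\O(|z|^3)\partial_{z_k}$ and $\O(h^{3/2}|z|^3) h^{-1/2}\partial_{z_k}$ pick up the frequency gain and again contribute to $C\|v\|_{L^2}$; the term $\O(h^{1/2}|z|)X_0$ produces exactly the $h^{1/2-\delta}\|X_0 v\|_{L^2}$ contribution; and the term $\O(h|z|^2)h^{-1/2}\partial_{z_k} X_0$, combining one power of $h^{-\delta}$ from the support bound with the frequency gain on $h^{-1/2}\partial_{z_k}$, yields $h^{1-2\delta}\|X_0 v\|_{L^2}\leq h^{1/2-\delta}\|X_0 v\|_{L^2}$.

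The main obstacle is the bookkeeping in the first stage: there are many leftover pieces from combining a subleading Taylor coefficient with an $h$-singular factor from the derivative side, and one must confirm that each such product can be crudely majorized by one of the seven listed operator templates (possibly after enlarging the $\O$-coefficient). The second stage is essentially routine once the semiclassical rescaling of Lemma \ref{L:T-lemma} and the wavefront calculus of Section \ref{preliminaries} are in place; the only subtle point there is that the frequency localization, expressed abstractly as a wavefront statement, must be converted into the concrete Fourier multiplier bound $\|h^{-1/2}\partial_{z_k}v\|_{L^2}\lesssim \|v\|_{L^2}$, which one does by cutting off to the essential wavefront region at the cost of errors that can be harmlessly absorbed into $\|v\|_{L^2}$.
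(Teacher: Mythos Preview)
Your overall strategy is the natural one and matches what the paper does implicitly (the paper states the lemma without proof, treating the expansion as a direct consequence of substituting \eqref{geomexps} into the conjugated expression preceding \eqref{E:TDelta}). Part~1 is indeed just bookkeeping along the lines you describe.

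There is, however, a genuine gap in your Part~2. Your key claim that the frequency localization $|\zeta|\lesssim h^{1/2}$ yields $\|h^{-1/2}\partial_{z_k}v\|_{L^2}\lesssim \|v\|_{L^2}$ is off by a full power of $h^{-1}$. Recall that $\zeta$ is the \emph{semiclassical} dual to $z$, so $\zeta=h\eta$ with $\eta$ the classical frequency; the condition $|\zeta|\le \epsilon h^{1/2}$ therefore means $|\eta|\le \epsilon h^{-1/2}$, which gives only $\|\partial_{z_k}v\|_{L^2}\lesssim h^{-1/2}\|v\|_{L^2}$ and hence $\|h^{-1/2}\partial_{z_k}v\|_{L^2}\lesssim h^{-1}\|v\|_{L^2}$. (Equivalently, undo the rescaling: $h^{-1/2}\partial_{z_k}v = T_{h,0}^{-1}(\partial_{x_k}f)$, and $|\xi'|\le\epsilon$ only controls $hD_{x'}f$, not $\partial_{x'}f$.) With the correct bound, the contributions $\O(|z|^3)\partial_{z_k}$ and $\O(h|z|^2)h^{-1/2}\partial_{z_k}X_0$ produce $h^{-1/2-3\delta}\|v\|$ and $h^{-2\delta}\|X_0 v\|$ respectively, neither of which fits the asserted estimate.

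This is in fact a symptom of the hypothesis \eqref{E:v-WF} being too weak for the conclusion as literally stated. The paper's actual use of the lemma is for the constructed quasimode $v_0$, which enjoys the much sharper frequency localization $|\zeta|\le \epsilon h^{1-\delta}$ (see the sentence following the proof of Proposition~\ref{P:solution-to-0-eqn}); under that stronger hypothesis one has $\|\partial_{z_k}v_0\|\lesssim h^{-\delta}\|v_0\|$, and your termwise argument then goes through with all contributions landing in $C\|v_0\|$ up to harmless $h^{-O(\delta)}$ losses. So the fix is not to change your method but to invoke the sharper localization available for the functions to which the lemma is actually applied.
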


\begin{rema}
We will show later that for the particular choice of $v$ we construct,
the operators $X_0$ and $X_0^2$ are bounded operators, so that the
error $Pv$ is bounded in $L^2$ by $v$ (see Remark \ref{rem:x0bdd}).
\end{rema}

For the purposes of exposition, let us then assume for now that the
term $Pv$ is bounded and proceed (this will be justified later).  Applying $T_{h,0}^{-1}$ to the equation $-\Delta F = \lambda F -
\sigma |F|^p F$ yields
\begin{align*}
-T_{h,0}^{-1}\Delta T_{h,0}  T_{h,0}^{-1} F & = \lambda T_{h,0}^{-1} F -
\sigma T_{h,0}^{-1} (|F|^p F) \\
& = \lambda T_{h,0}^{-1} F -
\sigma h^{-pn/4}|T_{h,0}^{-1} F|^p T_{h,0}^{-1} F,
\end{align*}
so that multiplying \eqref{E:TDelta} by $h e^{-ix_0/h}$, we get 
\begin{multline} \label{EqToSolve}
	(2iX_0 + \Delta_{\Gamma^{\perp}} - R_2(z))v \\ 
	= \frac{1-h^2\lambda}{h}v 
	+h^{1/2} \left( i g^{jk} \widetilde\Gamma^0_{jk1}  -
          2i\tilde{g}^{k0}(z)\partial_{z_k} + R_3(z) \right)v 
+ h  (R_4(z) +i g^{jk} \tilde{\Gamma}^0_{jk2}) v \\
	+ h^{1-pn/4}\sigma |v|^p v
	+ he^{-ix_0/h} Pv,
\end{multline}
where $P$ is the same as above.

\begin{rema}
In order to ensure that the nonlinearity appears here as a lower order
term, we require
\begin{equation}
\label{E:p-rest1}
q:=1-pn/4>0, \text{ or } p < \frac{4}{n} = \frac{4}{d-1}
\end{equation}
as stated in the theorems. 
\end{rema}

We want to think of the left hand side as similar to a time-dependent harmonic oscillator where $x_0$ plays the role of the time variable. 

Let $ q = 1-pn/4$, $0 < q < 1$.  We would like to assume that $v$ and
$\lambda$ have expansions in $h^{q}$, however the spreading of
wavefront sets due to the nonlinearity allows us to only take the
first two terms when $0 < q \leq 1/2$ and the first three terms
otherwise.

{\bf Case 1: $0 < q \leq 1/2$.}  Assume that $v$ has a two-term expansion
\[
v = v_0 + h^q v_1
\]
 and moreover that 
 there exist
$E_k$, $k=0,1$, satisfying 
\begin{equation*}
	\frac{1-h^2\lambda}{h} = E_0 + h^{q}E_1 + \O(h^{2q}).
\end{equation*}
Since $q \leq
1/2$, then the $\O(h^{1/2})$
term in \eqref{EqToSolve} is of equal or lesser order than the nonlinear term,
and substituting into \eqref{EqToSolve} we get the following equations
according to powers of $h$:
\begin{equation}\label{SystemEqs}
\begin{split}
h^0: \quad	(2iX_0 + \Delta_{\Gamma^{\perp}} - R_2(z) - E_0) v_0 &= 0,\\
h^q : \quad	(2iX_0 + \Delta_{\Gamma^{\perp}} - R_2(z) - E_0-h^q E_1) v_1
&= E_1v_0 + \sigma | v_0 |^p v_0 + h^{1/2-q}L v_0 , 
\end{split}
\end{equation}
where
\[
L v_0 = \left( i g^{jk} \widetilde\Gamma^0_{jk1}  -
          2i\tilde{g}^{k0}(z)\partial_{z_k} + R_3(z)  + h^{1/2} R_4(z)+ih^{1/2}g^{jk}\tilde{\Gamma}^{0}_{jk2}
        + h^{1/2} e^{-ix_0/h} P \right) v_0 .
\]
We will show the error terms are $\O(h^{2q})$ in the appropriate
$H^s$ space.  See \S \ref{S:q-leq-half}.

{\bf Case 2: $1/2 < q < 1$.}  In the case $q > 1/2$, the $\O(h^{1/2})$ term becomes potentially
larger than the nonlinearity, so we take three terms in the expansions
of $v$ and $E=\frac{1-h^2\lambda}{h}$:
\[
v = v_0 + h^{1/2} v_1 + h^q v_2, \,\,\, E = E_0 + h^{1/2} E_1 + h^q
E_2 + \mathcal{O} (h).
\]
We then want to solve
\begin{equation}
\label{SystemEqs2}
\begin{split}
h^0: \quad	(2iX_0 + \Delta_{\Gamma^{\perp}} - R_2(z) - E_0) v_0
&= 0,\\
h^{1/2}: \quad 	(2iX_0 + \Delta_{\Gamma^{\perp}} - R_2(z) - E_0) v_1 &
=
E_1 v_0 + L v_0 \\
h^q : \quad	(2iX_0 + \Delta_{\Gamma^{\perp}} - R_2(z) - E_0 -
h^{1/2} E_1 -h^q E_2) v_2
&= E_2 v_0 + h^{1-q} E_1 v_1 + h^{1/2} E_2 v_1 \\
& + \sigma | v_0 |^p v_0 + h^{1-q} L v_1.
\end{split}
\end{equation}
In this case we will show the error is $\O(h^{1/2 + q})$ in the
appropriate $H^s$ space.  See \S \ref{S:q-g-half}.

\section{Quasimodes}
\label{S:quasimodes}

We begin by approximately solving the $h^0$ equation by undoing our
previous rescaling.  That is, let $w_0 (x_0, x, h) = T_{h.0} v_0(x_0,
x, h) = h^{-n/4} v_0(x_0, h^{-1/2} x, h)$, and conjugate the $h^0$
equation by $T_{h,0}$ to get:
\begin{align*}
0 & = T_{h,0} (2iX_0 + \Delta_{\Gamma^{\perp}} - R_2(z) - E_0) T_{h,0}^{-1}
T_{h,0} v_0 \\
& =  (2iX_0 + h\Delta_{\Gamma^{\perp}} - h^{-1} R_2(x) - E_0) w_0,
\end{align*}
where now the coefficients in $\Delta_{\Gamma^\perp}$ are independent
of $h$, and we have used the homogeneity of $R_2$ in the $x$
variables.  Multiplying by $h$, we have the following equation: 
\begin{equation}
\label{E:sc-orbit}
(2ihX_0 + h^2 \Delta_{\Gamma^{\perp}} -  R_2(z) - hE_0) w_0 = 0,
\end{equation}
Hence, \eqref{E:sc-orbit} is a semiclassical equation in a fixed
neighbourhood of $\Gamma$ with symbols in the $h^0$ calculus (i.e. no
loss upon taking derivatives).  The principal symbol of the operator
in \eqref{E:sc-orbit} is
\[
p = \tau - | \zeta |^2_{\tilde{g}} - R_2(z),
\]
where $\tilde{g}$ is the metric in the transversal directions to
$\Gamma$, and $\tau$ is the dual variable to the $x_0$ direction.  If
we let $\tilde{\Gamma}$ be the (unit speed) lift of $\Gamma$ to
$T^*M$, and if $\exp(s H_p )$ is the Hamiltonian flow of $p$, then
\[
\Gamma = \{ \zeta = z = 0, x_0 = s \in \reals/ \ZZ \}.
\]
Since, in the transversal directions, $p$ is a negative definite
quadratic form, the linearization of the Poincar\'e map $S$ is easy to
compute:
\[
S = \exp (H_{q}),
\]
where
\[
q = -|\zeta|^2_{\tilde{g}} - R_2(z),
\]
so that
\[
H_q = -2 a(x_0, z)^{j,k}\zeta_j \partial_{z_k} + 2 b^{j,k}(x_0) z_j \partial_{\zeta_k},
\]
where $a$ and $b$ are symmetric, positive definite matrices.
Linearizing $S$ about, say, $x_0$ and $z = \zeta = 0$ we get that
$dS(0,0)$ has all eigenvalues on the unit circle, in complex conjugate
pairs.  
That is, $\Gamma$ is still a periodic elliptic orbit of the classical
flow of $p$.

Since $p$ is defined on a fixed scale, we can glue $p$ together with
an operator which is elliptic at infinity so that $p$ is of
real principal type so that we can apply Theorem \ref{T:quasi-ch} in
the appendix to construct linear quasimodes.  Note that since we have
quasi-eigenvalue of order $\O(h)$, Theorem \ref{T:quasi-ch} 
implies the quasimodes are concentrated on a scale $|z| \leq h^{1/2}$,
$| \zeta | \leq h^{1/2}$.

This is made precise in the following proposition.

\begin{prop}
\label{P:solution-to-0-eqn}
There exists $w_0\in L^2$, $\| w_0 \|_{L^2} = 1$, and ${E}_0 = \O(1)$ 
such that
\[
(2ihX_0 + h^2 \Delta_{\Gamma^{\perp}} -  R_2(z) - hE_0)w_0 =
e_0(h).
\]
Here the error $e_0(h) = \O(h^\infty) \in L^2$ (or in any other seminorm), and 
$w_0$ has
$h$-wavefront set sharply localized on $\Gamma$ in the sense that if
$\phi \in \Psi^0_{1/2 - \delta, 1/2 - \delta}$ is $1$ near $\Gamma$, then for any $\delta>0$, $\phi
w_0 = w_0 + \O(h^\infty)$, and if $\delta = 0$, $\|
\phi w_0 \| \geq c_0 \| w_0 \|$ for some positive
$c_0$ depending on the support of $\phi$.  

Moreover, $w_0 \in H^\infty(M)$ and satisfies the estimate
\[
\| w_0 \|_{H^s(M)} = \O(h^{-s/2} ).
\]
\end{prop}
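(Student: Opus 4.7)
The plan is to reduce the construction to the standard Gaussian beam / Ralston-type quasimode theorem cited as Theorem \ref{T:quasi-ch}. First, I would identify the classical geometry. The semiclassical principal symbol $p = \tau - |\zeta|^2_{\tilde g} - R_2(z)$ of the operator $P_h = 2ihX_0 + h^2\Delta_{\Gamma^\perp} - R_2(z)$ vanishes along the lifted geodesic $\tilde\Gamma = \{z = 0,\ \zeta = 0,\ \tau = 0\}$, and $\tilde\Gamma$ is a periodic orbit of the Hamiltonian flow of $p$ parametrized by $x_0$. In the normal symplectic slice the transverse Hamiltonian is the quadratic form $-|\zeta|^2_{\tilde g} - R_2(z)$; by the elliptic hypothesis \eqref{Assumption} the quadratic form $R_2$ is positive definite, so the linearized Poincar\'e map is symplectic with spectrum on the unit circle in complex conjugate pairs, matching precisely the definition of a non-degenerate elliptic closed orbit. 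Outside a fixed neighbourhood of $\Gamma$ the symbol $p$ need not be of real principal type globally, so I would glue $p$ together with a symbol elliptic at infinity (in $\tau$ and $\zeta$) and away from $\tilde\Gamma$, obtaining an extension $\tilde P_h$ of real principal type to which Theorem \ref{T:quasi-ch} applies directly.

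Inside this theorem the quasimode is built by the usual WKB/Gaussian beam Ansatz
\begin{equation*}
w_0(x_0,x) = e^{i\varphi(x_0,x)/h}\sum_{j \geq 0} h^{j/2} a_j(x_0, x),
\end{equation*}
with $\varphi$ vanishing on $\Gamma$ and $\Im \varphi \geq c|x|^2$ transversally. The eikonal equation reduces along $\Gamma$ to a matrix Riccati equation for $\partial^2_{x} \varphi$ whose solvability with $\Im(\partial^2_x\varphi) > 0$ is equivalent to ellipticity of the linearized Poincar\'e map; Floquet theory of symplectic flows with purely imaginary characteristic exponents $\{\pm i\alpha_j\}$ produces a periodic solution (after an appropriate choice of complex Lagrangian in $E\oplus E$). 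The transport equations for $a_j$ are solved iteratively along $\Gamma$, with the constant $E_0$ fixed by the quantization condition that forces $a_0$ to be single-valued around $\Gamma$: this imposes the Bohr-Sommerfeld type relation $L/h + \sum \alpha_j/2 \equiv 2\pi k \pmod{2\pi}$ (with a Maslov correction), which is compatible with the quantization $h = L/(2\pi N)$ in \eqref{formeh} and selects $E_0 = \mathcal{O}(1)$. Borel summation of $\sum h^{j/2} a_j$ and truncation produce the $\mathcal{O}(h^\infty)$ remainder.

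Finally, the localization and regularity statements fall out of this form. Since $\Im \varphi \sim |x|^2$ and the amplitudes $a_j$ are Schwartz transversally, $w_0$ is Gaussian-concentrated on the scale $|x| \lesssim h^{1/2}$, and Fourier analysis of $e^{i\varphi/h}a_0$ shows $\zeta$ is likewise localized at scale $h^{1/2}$; hence the $\mathrm{WF}_{h,1/2,1/2}$ statement, while the weaker $\Psi^0_{1/2-\delta,1/2-\delta}$ cutoff already captures all but an $\mathcal{O}(h^\infty)$ piece. Each $x$-derivative of $e^{i\varphi/h}a_j$ contributes a factor of $h^{-1/2}$ in $L^2$, and derivatives in $x_0$ are bounded, so $\|w_0\|_{H^s(M)} = \mathcal{O}(h^{-s/2})$ by direct Gaussian integration. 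I expect the main obstacle to be the global/periodic aspect: verifying that the Riccati solution $\partial^2_x\varphi(x_0)$ produced by propagating along $\Gamma$ actually closes up to a single-valued Lagrangian subspace (equivalently, constructing an invariant complex Lagrangian under the linearized Poincar\'e map), and verifying the compatibility of the Bohr-Sommerfeld condition for $E_0$ with the specific quantization \eqref{formeh}; both rely crucially on the non-resonance assumption that $\alpha_1,\dots,\alpha_n,\pi$ be $\mathbb Q$-independent built into the definition of non-degenerate elliptic orbit.
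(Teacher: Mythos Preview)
Your overall strategy---verify ellipticity of the orbit for the model symbol, glue to a real-principal-type operator, and then invoke a quasimode existence theorem---matches the paper exactly; the paper carries out precisely this reduction in the paragraphs preceding the proposition and then cites Theorem~\ref{T:quasi-ch}. Where you diverge is in the mechanism you sketch ``inside'' that theorem and in how you obtain the sharp localization. You outline the classical Ralston/Babi\v{c} Gaussian-beam construction (complex phase solving a Riccati equation, transport hierarchy, Bohr--Sommerfeld closure), and then read off the $h^{1/2}$-scale localization and the $H^s$ bound directly from the explicit Gaussian profile $e^{i\varphi/h}a$ with $\Im\varphi\sim |x|^2$. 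The paper instead takes the quantum-monodromy/Poincar\'e-section route of \cite{SjZw-mono,Chr-QMNC}: the quasimode is a harmonic-oscillator eigenfunction on a transversal section propagated around $\Gamma$ by the monodromy operator $M(x_0)$, and the sharp localization is not read off from an explicit phase but is established a posteriori by the Egorov-type commutator Lemmas~\ref{L:comm-1} and~\ref{L:comm-2}, after which $w_0$ is replaced by $\phi w_0$ and the commutator $[P_h,\phi]w_0$ is shown to be $\mathcal{O}(h^\infty)$.

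Both routes are standard and correct. Your WKB approach is more self-contained and makes the Gaussian concentration and the $\|w_0\|_{H^s}=\mathcal{O}(h^{-s/2})$ estimate transparent, at the cost of having to handle the periodic Riccati/Floquet problem and the Bohr--Sommerfeld compatibility with \eqref{formeh} by hand (which you correctly flag as the delicate point). The paper's monodromy approach packages those global issues into Theorem~\ref{T:quasi-ch} and the unitarity of $M(x_0)$, and obtains localization by a softer propagation argument that does not require an explicit phase; this also meshes better with the inhomogeneous step (Proposition~\ref{P:inhomog}), where the same monodromy operator is reused in a Duhamel formula.
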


\begin{proof}
The construction follows from Theorem \ref{T:quasi-ch},
and is well-known in other settings, see for instance \cite{Ral-beam},
\cite{Bab-geo}, and \cite{CaPo-quasi}.
To get the sharp localization, apply Lemmas \ref{L:comm-1} and
\ref{L:comm-2} from the appendix to get the localization on $w_0$.
Once we know that $w_0$ is so localized, we can replace $w_0$ with
$\phi w_0$, where $\phi \in \Psi^0_{1/2 - \delta}$ is as in the
proposition.  Then $\phi w_0$ satisfies
\[
(2ihX_0 + h^2 \Delta_{\Gamma^{\perp}} -  R_2(z) - hE_0) \phi w_0 =
\tilde{e}_0(h),
\]
where 
\[
\tilde{e}_0(h) = \phi e_0(h) + [(2ihX_0 + h^2 \Delta_{\Gamma^{\perp}}
-  R_2(z) ), \phi] w_0.
\]
But now $\phi e_0(h) = \O(h^\infty)$, while the
commutator is supported outside of an $h^{1/2-\delta}$ neighbourhood
of $\Gamma$, so by the localization of $w_0$ is $\O(h^\infty)$ and
localized in a slightly larger set on the scale $h^{1/2 - \delta}$.
\end{proof}

Now recalling $v_0 = T_{h,0}^{-1} w_0$, then $v_0$ satisfies
\begin{align*}
(2iX_0 + & \Delta_{\Gamma^{\perp}} - R_2(z) - E_0) v_0 \\
& = h^{-1} T_{h,0}^{-1} (2 i hX_0 + h^2 \Delta_{\Gamma^{\perp}} -
R_2(z) - E_0) T_{h,0} {v}_0 \\
& = h^{-1} T_{h,0}^{-1} (2 i hX_0 + h^2 \Delta_{\Gamma^{\perp}} -
R_2(z) - E_0) w_0\\
& = h^{-1} T_{h,0}^{-1} e_0(h).
\end{align*}
The error $h^{-1} T_{h,s}^{-1}e_0(h) = \O(h^\infty)$ in any seminorm
still, but the function $v_0$ is now
localized on a scale $h^{-\delta}$ in space.  That is,
\[
\sWF_{h, 0, 1 - \delta} v_0 \subset \{ | x | \leq \epsilon
h^{-\delta}, | \xi | \leq \epsilon h^{1 - \delta} \}.
\]

\subsection{The inhomogeneous equation}
We are now in a position to solve the lower order inhomogeneous
equations in \eqref{SystemEqs}.  The quasimode $v_0$ has
been constructed as a ``Gaussian beam'' (see \cite{Ral-beam}); it is a harmonic oscillator
eigenfunction extended in the $x_0$ direction by the {\it quantum
monodromy operator} from \cite{SjZw-mono}, which
is defined in \eqref{E:mondef} below.  From this construction, the boundedness of
the error term $Pv_0$ as stated in Lemma \ref{L:P-error-est}.  In what
follows we construct $v_1$ in the case $0 \leq q \leq \half$ and
$v_1,v_2$ in the case $\half < q < 1$ and show that a similar bound holds
for $h^q Pv_1$ and $h^\half P v_1 + h^q P v_2$ respectively.

We want to solve 
\begin{equation}
\label{E:hq}
(2iX_0 + \Delta_{\Gamma^{\perp}} - R_2(z) - \tilde{E}_0) v_1
= E_1v_0 +G,
\end{equation}
where $\tilde{E}_0= E_0 + h^\eta E_1$ for some $\eta>0$, $E_0$ and $v_0$ have been fixed by the previous construction,
and $G = G(x_0, x)$ is a given function (periodic in $x_0$) with
\[
\sWF_{h,0,1/2} G \subset \{ | x | \leq \epsilon h^{-\delta}, | \xi |
\leq \epsilon h^{1/2} \}.
\]
Note, the localization of $G$ follows from the nonlinearity as well as geometric multipliers in the
operator $L$, see Appendix \ref{A:harmosc} and \eqref{E:v-WF}.

Conjugating by $T_{h,0}$ as before, we
get the equation
\begin{equation}
\label{E:hq2}
(2ihX_0 + h^2\Delta_{\Gamma^{\perp}} - R_2(x) - h\tilde{E}_0) w_1
=G_2 ,
\end{equation}
where
\[
G_2 = h T_{h,0}(E_1 v_0 + G).
\]
We observe then that
\[
\sWF_{h, 1/2 - \delta, 0} G_2 \subset \{ | x | \leq \epsilon h^{1/2 -
  \delta}, | \xi | \leq \epsilon  \}.
\]

Specifically, let $M(x_0)$ be the deformation family to
the quantum monodromy operator defined as the solution to:
\begin{eqnarray}
\label{E:mondef}
\left\{ \begin{array}{c}
(2ihX_0 + h^2\Delta_{\Gamma^{\perp}} - R_2(x) - h\tilde{E}_0) M(x_0) = 0, \\
M(x_0) = \id,
\end{array} \right.
\end{eqnarray}
which exists microlocally in a neighbourhood of $\Gamma \subset
T^*X$ (for further discussion see also
Appendix \ref{quasi} and references therein).  By the Duhamel formula, we write
\[
w_1 = M(x_0) w_{1,0} + \frac{i}{h} \int_{0}^{x_0} M(x_0) M(y_0)^* {G}_2
(y_0, \cdot) d y_0.
\]
We have to choose $w_{1,0}$ and $E_1$ (implicit in $G_2$) in such a
fashion to make $w_1$ periodic in $x_0$; in other words, to solve the
equation (approximately) around $\Gamma$.  Let $L$ be the primitive period of
$\Gamma$, so that $x_0 = 0$ corresponds to $x_0 = L$.  Then we require
\[
w_1(L, \cdot) = w_1(0, \cdot),
\]
or
\[
w_{1,0} = M(L) w_{1,0} + \frac{i}{h} \int_0^L M(L) M(y_0)^*
{G}_2(y_0, \cdot) d y_0.
\]
In other words, we want to be able to invert the operator $(1 -
M(L))$.  The problem is that $w_{0,0}: = w_0 (0, \cdot) = T_{h,0} v_0(0, \cdot)$ is in the kernel of
$(1 - M(L))$, so we need to choose $E_1$ in such a fashion to kill the
contribution of ${G}_2$ in the direction of $w_{0,0}$.  

Recall that 
\begin{align*}
G_2 & = hT_{h,0} (E_1 v_0 + G   ) \\
& = h (E_1 w_0 + \tilde{G}),
\end{align*}
where 
\[
\tilde{G} = T_{h,0}G.
\]
We want to solve microlocally 
\begin{align}
(1 - M(L)) w_{1,0} & = \frac{i}{h} \int_0^L M(L) M(y_0)^*
(h (E_1 w_0 + \tilde{G})) d y_0 \notag \\
& = i \int_0^L M(L) M(y_0)^*
(E_1 M(y_0) w_{0,0} + \tilde{G}) d y_0 \notag \\
& = i L M(L) E_1 w_{0,0} + 
i \int_0^L M(L) M(y_0)^*
 \tilde{G} d y_0 .\label{E:RHS-orth}
\end{align}
Let
\[
E_1 = - \frac{1}{L} \lll \int_0^L M(y_0)^*
\tilde{G} d y_0 , w_{0,0} \rrr,
\]
so that \eqref{E:RHS-orth} is orthogonal to $w_{0,0}$.  

If we denote
\[
L^2_{w_{0,0}^\perp} = \{ u \in L^2 : \lll u, w_{0,0} \rrr = 0 \},
\]
then 
by the nonresonance assumption (since $E_0 + h^\eta E_1$ is a small
perturbation of $E_0$), and the fact that $M(L)$ is unitary on
$L^2$, $(I -M(L))^{-1}$ is a bounded operator (see \cite{Chr-QMNC})
\[
(1 - M(L))^{-1} : L^2_{w_{0,0}^\perp} \to L^2_{w_{0,0}^\perp}.
\]
Hence
\[
w_{1,0} = (1 - M(L))^{-1} \frac{i}{h} \int_0^L M(L) M(y_0)^*
{G}_2(y_0, \cdot) d y_0
\]
satisfies
\[
\| w_{1,0} \|_{L^2 } \leq C h^{-1} \left\| \int_0^L  M(y_0)^*
{G}_2(y_0, \cdot) d y_0 \right\|_{L^2} \leq C L^{1/2} h^{-1} \| G_2
\|_{L^2(x_0) L^2}
\]
and
\[
w_{1,0} \in L^2_{w_{0,0}^\perp}.
\]
Furthermore, we have the estimate
\[
\| w_{1,0} \|_{\dH^s} \leq C h^{-1} h^{-s/2} \left\| \int_0^L  M(y_0)^*
{G}_2(y_0, \cdot) d y_0 \right\|_{L^2} \leq C L^{1/2} h^{-1-s/2} \| G_2
\|_{L^2(x_0) L^2},
\]
that is, the $\dH^s$ norm is controlled, but not by the homogeneous
Sobolev norm.

We have proved the following Proposition, which follows simply from
tracing back the definitions.

\begin{prop}
\label{P:inhomog}
Let $v_0$ be as constructed in the previous section, and let $G \in H^s$ for $s\geq 0$ sufficiently large satisfy
\[
\sWF_{h,0,1/2} G \subset \{ | x | \leq \epsilon h^{-\delta}, | \xi |
\leq \epsilon h^{1/2} \}.
\]
Then for any $\eta>0$, there exists $v_1 \in L^2$ and $E_1 = \O(\| G \|_{L^2})$ such
that
\[
(2iX_0 + \Delta_{\Gamma^{\perp}} - R_2(z) - E_0 - h^\eta E_1) v_1
= E_1v_0 +G,
\]
and moreover
\begin{eqnarray*}
\| v_1 \|_{\dH^s} \leq C (\| G \|_{H^s} + \| v_0 \|_{H^s} ).
\end{eqnarray*}
\end{prop}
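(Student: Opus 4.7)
The plan is to carry out and tighten the Duhamel construction sketched in the paragraphs preceding the statement. First I would conjugate the equation by the rescaling operator $T_{h,0}$: setting $w_1 = T_{h,0} v_1$, $\tilde{G} = T_{h,0} G$, and multiplying through by $h$, the equation becomes
\begin{equation*}
(2ihX_0 + h^2\Delta_{\Gamma^{\perp}} - R_2(x) - h(E_0 + h^\eta E_1)) w_1 = h(E_1 w_0 + \tilde{G}),
\end{equation*}
which lives in the standard semiclassical calculus with symbols microlocalized in a fixed neighbourhood of $\Gamma$ (by Proposition \ref{P:solution-to-0-eqn} and the wavefront hypothesis on $G$). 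Using the quantum monodromy family $M(x_0)$ defined by \eqref{E:mondef}, Duhamel's formula gives the representation
\begin{equation*}
w_1(x_0,\cdot) = M(x_0) w_{1,0} + i\int_0^{x_0} M(x_0) M(y_0)^* \bigl(E_1 w_0(y_0,\cdot) + \tilde{G}(y_0,\cdot)\bigr)\,dy_0.
\end{equation*}

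Next I would impose periodicity $w_1(L,\cdot) = w_1(0,\cdot)$, which reduces to solving $(1 - M(L)) w_{1,0} = i L M(L) E_1 w_{0,0} + i\int_0^L M(L) M(y_0)^* \tilde{G}\,dy_0$. Since $w_{0,0}$ lies in the kernel of $1-M(L)$, the only way to solve is to choose $E_1$ so that the right-hand side lies in $w_{0,0}^{\perp}$; this forces
\begin{equation*}
E_1 = -\frac{1}{L} \Bigl\langle \int_0^L M(y_0)^* \tilde{G}\,dy_0,\, w_{0,0}\Bigr\rangle,
\end{equation*}
and the unitarity of $M(y_0)$ on $L^2$ immediately gives $|E_1| \lesssim \|\tilde G\|_{L^2} = \|G\|_{L^2}$.

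The key analytic step is to invert $1 - M(L)$ on $L^2_{w_{0,0}^\perp}$ with a bound uniform in $h$. Here I would invoke the nonresonance hypothesis on the elliptic geodesic: the linearized Poincaré map at $\Gamma$ has eigenvalues $e^{\pm i\alpha_j}$ with $\{\alpha_1,\ldots,\alpha_n,\pi\}$ linearly independent over $\mathbb{Q}$, so the spectrum of $M(L)$ on $w_{0,0}^\perp$ stays at a fixed positive distance from $1$, and the small perturbation by $h^\eta E_1$ cannot close this gap once $h$ is small enough; this is exactly the situation handled in \cite{Chr-QMNC}. Boundedness of $(1-M(L))^{-1}$ on $L^2_{w_{0,0}^\perp}$ then gives $\|w_{1,0}\|_{L^2} \lesssim \|\tilde{G}\|_{L^2(x_0;L^2)}$, and inserting back into Duhamel controls $\|w_1\|_{L^2}$.

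The main obstacle I anticipate is the Sobolev estimate in $\dH^s$: a priori $(1-M(L))^{-1}$ is only an $L^2$-bounded operator. However, because both $\tilde{G}$ and $w_{0,0}$ are microlocalized in the box $\{|x|\le \epsilon h^{1/2-\delta},\ |\xi|\le \epsilon\}$ that is preserved by the monodromy flow, the solution $w_1$ inherits the same microlocalization, and on functions so concentrated the $\dH^s$ and $L^2$ norms are comparable up to a factor of $h^{-s/2}$. Undoing the rescaling $v_1 = T_{h,0}^{-1} w_1$ and applying Lemma \ref{L:T-lemma} converts these powers of $h$ into the scaling-invariant statement $\|v_1\|_{\dH^s} \lesssim \|G\|_{H^s} + \|v_0\|_{H^s}$, which is the desired bound. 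I would verify the microlocalization claim by commuting an exotic cutoff $\phi \in \Psi^0_{1/2-\delta,1/2-\delta}$ through the Duhamel integral, using that $M(x_0)$ propagates along the lifted geodesic $\tilde\Gamma$ together with the argument already used in Proposition \ref{P:solution-to-0-eqn}.
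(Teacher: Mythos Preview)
Your proposal is correct and follows essentially the same route as the paper: conjugate by $T_{h,0}$, use Duhamel with the monodromy family $M(x_0)$, fix $E_1$ by orthogonality to $w_{0,0}$, invert $1-M(L)$ on $L^2_{w_{0,0}^\perp}$ via the nonresonance assumption, and recover the $\dH^s$ bound from the $L^2$ bound by microlocalization. The paper's argument is in fact the construction written out just before the proposition (it says the result ``follows simply from tracing back the definitions''), and your write-up is if anything more explicit than the paper about why the $\dH^s$ estimate goes through.
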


\begin{rema}
\label{rem:x0bdd}
We note here that by construction of $v_1$ we have implicitly
microlocalized into a periodic tube in the $x_0$ variable.  Using the fact that the Quantum
Monodromy Operator is a microlocally unitary operator (see \cite{SjZw-mono,Chr-QMNC}), the bound
\begin{eqnarray*}
\| X_0^j v_1 \|_{L^2} \lesssim \| v_1 \|_{L^2}
\end{eqnarray*}
follows easily for any $j$, which is required for proof of Lemma
\ref{L:P-error-est}.
\end{rema}

 \subsection{Construction of quasimodes in the context of Theorem \ref{T:2}}
 We now have all the tools to construct  the quasimodes which will be used to prove Theorem \ref{T:2}. Let $d\geq 2$ and $0<p<4/(d-1)$. As previously, denote by $q=1-p(d-1)/4$. The main results of this part will be stated in Propositions \ref{PQ1} and \ref{PQ2}.
\subsubsection{The case $0 < q \leq 1/2$, i.e. $\frac2{d-1}\leq p< \frac4{d-1}$}
\label{S:q-leq-half}
In this subsection, we see how to apply Proposition \ref{P:inhomog} in
the case $0 < q \leq 1/2$.  As described previously, in this case, the
nonlinearity is the next largest term, and we have only one
inhomogeneous equation so solve (see \eqref{SystemEqs}).

According to Propostion \ref{P:v-symbolic} and Corollary \ref{C:v-symbolic} from Appendix \ref{A:harmosc}, if 
\[
G = \sigma | v_0 |^p v_0 - h^{1/2-q}L v_0
\]
is the nonlinear term on the right-hand side, then $G$ is sharply
localized in space but weakly localized in frequency.  That is, if
$\chi \in \Ci_c( T^*X)$ is equal to $1$ in a neighbourhood of
$\Gamma$, then for any $0 \leq \delta < 1/2$ and any $0 \leq \gamma
\leq 1$, 
\[
\chi(h^\delta x, h^{1-\gamma} D_x) G(x_0, x) = G(x_0, x) + E,
\]
where for any $0 \leq r \leq 3/2$, 
\[
\| E \|_{\dH^r} \leq C h^{(1 - \gamma)(3/2 + p -r)}.
\]
We are interested in the case where $\gamma = 1/2$, since in that case
$G$ is weakly concentrated in frequencies comparable to $h^{1/2}$, so
by cutting off, satisfies the assumptions of Proposition
\ref{P:inhomog}.  That is, take $\gamma = 1/2$, and replace $G$ with
$\tilde{G} = \chi(h^\delta x, h^{1-\gamma} D_x) G$, and apply Proposition
\ref{P:inhomog} to get $v_1$ and $E_1$ satisfying
\[
(2iX_0 + \Delta_{\Gamma^{\perp}} - R_2(z) - E_0 - h^q E_1) v_1
= E_1v_0 +\tilde{G},
\]
or in other words
\[
(2iX_0 + \Delta_{\Gamma^{\perp}} - R_2(z) - E_0 - h^q E_1) v_1
= E_1v_0 + \sigma| v_0|^p v_0 + h^{1/2-q}L v_0+ \tilde{Q}_1,
\]
where 
\[
\| \tilde{Q}_1 \|_{\dH^r} \leq C h^{(1/2)(3/2 + p -r)}.
\]

Now letting $v = v_0 + h^q v_1$ and $E = E_0 + h^q E_1$, we have solved
\begin{align*}
(2iX_0 + \Delta_{\Gamma^{\perp}} - R_2(z) -E) v
& =h^q \sigma| v_0|^p v_0 + h^{1/2}L v_0+ h^q \tilde{Q}_1 \\
& = h^q \sigma |v|^p v + h^{1/2} L v + \tilde{Q}_2,
\end{align*}
where
\[
\tilde{Q}_2 = h^q \tilde{Q}_1 - h^{1/2+q} Lv_1 +\O(h^{2q} | v |^{p+1} ).
\]
The remainder $\tilde{Q}_2$ satisfies
\[
\| \tilde{Q}_2 \|_{\dH^s} \leq C h^{-s/2 + \min \{ 2q, 1/2 + q, q + 3/4 + p/2 \} } =
Ch^{-s/2 + 2q} ,
\]
since $q\leq 1/2$.  
Recalling the definitions, $\phi = e^{ix_0 /h} T_{h,0} v$ satisfies
\begin{itemize}
\item[(i)]
\[
\sWF_{h, 1/2 - \delta, 0} \,\phi \subset \{ | x | \leq \epsilon h^{1/2 -
  \delta}, | \xi | \leq \epsilon \};
\]
\item[(ii)]
\[
\| \phi \|_{L^2} \sim 1, \qquad \| D_{x_0}^\ell \phi \|_{L^2} \sim h^{-\ell},
\]
and
\[
\| D_{x'}^\ell \phi \|_{L^2} \leq C h^{-\ell/2} ;
\]
\item[(iii)]
\begin{align*}
(\Delta + \lambda)\phi & = h^{-1} e^{i x_0/h} T_{h,0} h e^{-i x_0/h} T_{h,0}^{-1}
\Delta T_{h,0} T_{h,0}^{-1} \phi \\
& = h^{-1} e^{i x_0/h} T_{h,0} (2 i X_0 + \Delta_{\Gamma^\perp} - R_2
-E 
- h^{1/2} L) v \\
& = h^{-1} e^{i x_0/h} T_{h,0} ( \sigma h^q | v |^p v +
Q_2),
\end{align*}
or
\[
(\Delta + \lambda)\phi = \sigma | \phi |^p \phi + h^{\alpha(p)}Q,
\]
where $Q = h^{-1} e^{i x_0/h} T_{h,0} \tilde{Q}_2$ satisfies
$\| Q \|_{\dH^s} \leq C h^{-s}$
and where 
\begin{equation}\label{error1}
\alpha(p)  := -1+ 2q   =1 - p \left( \frac{d-1}{2} \right).
\end{equation}
\end{itemize}

We now sum up what we have proven in a proposition. Consider the objects we have just defined : $v=v_{0}+h^{q}v_{1}$, $\phi_{h}=e^{ix_{0}/h}T_{h,0}v$ and $\lambda(h)=h^{-2}-E_{0}h^{-1}-E_{1}h^{-1+q}$. Then we can state
\begin{proposition}\label{PQ1}
Let $2/(d-1)\leq p<4/(d-1)$ and $\alpha(p)$ be given by \eqref{error1}. Then the  function $\phi_{h}$ satisfies the equation
 \begin{equation*}
 \big(\Delta+\lambda(h)\big)\phi_{h}=\sigma |\phi_{h}|^{p}\phi_{h}+h^{\alpha(p)}Q(h),
 \end{equation*}
 where ${Q}(h)$ is an error term which satisfies $\|{Q}(h)\|_{\dot{H}^{s}}\lesssim h^{-s}$, for all $s\geq 0$.
\end{proposition}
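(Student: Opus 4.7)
The proposition is essentially a clean restatement of the construction carried out in the preceding subsection, so the proof is a matter of assembling the previously established ingredients and verifying the error bookkeeping. My plan is to start from Proposition \ref{P:solution-to-0-eqn}, which provides $v_0$ and $E_0$ solving the $h^0$ equation in \eqref{SystemEqs} modulo $\O(h^\infty)$, with $v_0$ microlocalized on the scale $|x|\lesssim h^{-\delta}$, $|\xi|\lesssim h^{1-\delta}$ after undoing the rescaling $T_{h,0}$. The nonlinear source term $G = \sigma |v_0|^p v_0 - h^{1/2-q} L v_0$ is sharply localized in space but only weakly localized in frequency because of the limited smoothness of $|\cdot|^p$; by Proposition \ref{P:v-symbolic} and Corollary \ref{C:v-symbolic} (taking $\gamma=1/2$), a cutoff $\tilde G := \chi(h^\delta x, h^{1/2} D_x) G$ agrees with $G$ up to a remainder $\tilde Q_1$ with $\|\tilde Q_1\|_{\dot H^r}\lesssim h^{(1/2)(3/2+p-r)}$.

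Next I would apply Proposition \ref{P:inhomog} with source $\tilde G$ and parameter $\eta=q$, which yields $v_1\in L^2$ and $E_1 = \O(\|G\|_{L^2})$ solving
\begin{equation*}
(2iX_0 + \Delta_{\Gamma^\perp} - R_2(z) - E_0 - h^q E_1)\,v_1 \;=\; E_1 v_0 + \tilde G.
\end{equation*}
Setting $v = v_0 + h^q v_1$, $E = E_0 + h^q E_1$, a direct substitution together with $|v_0+h^q v_1|^p(v_0+h^q v_1) = |v_0|^p v_0 + \O(h^q |v|^{p+1})$ shows that $v$ satisfies
\begin{equation*}
(2iX_0 + \Delta_{\Gamma^\perp} - R_2(z) - E)\,v \;=\; h^q \sigma |v|^p v + h^{1/2} L v + \tilde Q_2,
\end{equation*}
where $\tilde Q_2 = h^q \tilde Q_1 - h^{1/2+q} L v_1 + \O(h^{2q}|v|^{p+1})$. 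The three contributions to $\tilde Q_2$ are of orders $h^{(1/2)(3/2+p-r)+q}$, $h^{1/2+q-r/2}$ and $h^{2q-r/2}$ in $\dot H^r$; taking the minimum (and using $q\le 1/2$) yields $\|\tilde Q_2\|_{\dot H^s}\lesssim h^{-s/2+2q}$.

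Finally I would undo the phase and rescaling: set $\phi_h = e^{ix_0/h} T_{h,0} v$ and $\lambda(h) = h^{-2} - E_0 h^{-1} - E_1 h^{-1+q}$. By the conjugation identity from Lemma \ref{L:T-lemma} and the computation \eqref{E:TDelta} (which is what motivated the rescaled equation \eqref{EqToSolve} in the first place), multiplying the previous display by $h^{-1} e^{ix_0/h} T_{h,0}$ produces
\begin{equation*}
(\Delta + \lambda(h))\,\phi_h \;=\; \sigma |\phi_h|^p \phi_h + h^{\alpha(p)} Q(h),
\end{equation*}
with $Q(h) = h^{-1} e^{ix_0/h} T_{h,0} \tilde Q_2$. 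Using the $\dot H^s$ scaling of $T_{h,0}$ (a factor $h^{-s/2}$ per Lemma \ref{L:T-lemma}) one checks $\|Q(h)\|_{\dot H^s}\lesssim h^{-1-s/2+s/2+2q}\cdot h^{-s/2} = h^{-s}\cdot h^{-1+2q}/h^{-1+2q} \cdot h^{-s/2+s/2}$; cleaning up, the exponent in $h$ matches $\alpha(p) = -1+2q = 1 - p(d-1)/2$ as defined in \eqref{error1}, and the $\dot H^s$ bound $\|Q(h)\|_{\dot H^s}\lesssim h^{-s}$ holds for all $s\ge 0$.

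The only genuinely delicate step is the error bookkeeping in the second paragraph: one has to verify that among the three independent error sources (frequency cutoff of the nonlinearity, the $h^{1/2+q} L v_1$ term, and the quadratic remainder in the expansion of $|v|^p v$), the restriction $q\le 1/2$ ensures that the dominant contribution is $h^{2q-s/2}$, so that after undoing the rescaling we recover precisely the claimed exponent $\alpha(p)$; everything else reduces to invoking results already established earlier in the excerpt.
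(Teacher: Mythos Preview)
Your proposal is correct and follows exactly the paper's approach: the proposition is stated there as a summary of the preceding construction (``We now sum up what we have proven in a proposition''), and you have faithfully reproduced the steps---building $v_0$ via Proposition~\ref{P:solution-to-0-eqn}, cutting off the nonlinear source via Corollary~\ref{C:v-symbolic} at $\gamma=1/2$, solving for $v_1$ via Proposition~\ref{P:inhomog}, and undoing the rescaling---with the same error terms $\tilde Q_1$, $\tilde Q_2$ and the same exponent comparison $\min\{2q,\,1/2+q,\,q+3/4+p/2\}=2q$. The chain of equalities in your penultimate paragraph is garbled (the exponents do not parse as written), so you should clean that up: the point is simply that $\|h^{-1}e^{ix_0/h}T_{h,0}\tilde Q_2\|_{\dot H^s}\lesssim h^{-1-s}\|\tilde Q_2\|_{L^2}\lesssim h^{-1-s+2q}=h^{\alpha(p)-s}$, which gives $\|Q(h)\|_{\dot H^s}\lesssim h^{-s}$.
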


\subsubsection{The case $1/2 < q \leq 1$, i.e. $0\leq p< \frac2{d-1}$}
\label{S:q-g-half}
 We again construct
$v_0$ as a Gaussian beam using the quantum monodromy operator.  We
then set 
\[
G_1 = E_1 v_0 + L v_0 ,
\]
which is smooth with compact wavefront set contained in the wavefront
set of $v_0$, so no phase space cutoff is necessary to apply the
inhomogeneous argument to get $v_1$ with wavefront set contained in
the wavefront set of $v_0$.  

Now let
\[
G_2 = E_2 v_0 + h^{1-q} E_1 v_1 + h^{1/2} E_2 v_1+ \sigma | v_0 |^p
v_0 + h^{1-q} L v_1,
\]
and solve for $v_2$ as in the previous subsection.  This is possible
since $v_1$ is orthogonal to $v_0$ by construction.  
We then have 
\[
(2iX_0 + \Delta_{\Gamma^{\perp}} - R_2(z) - E_0 - h^q E_1) v_2
= E_2 v_0 + h^{1-q} E_1 v_1 + h^{1/2} E_2 v_1+ \sigma | v_0 |^p
v_0 + h^{1-q} L v_1+ \tilde{Q}_1,
\]
where 
\[
\| \tilde{Q}_1 \|_{\dH^r} \leq C h^{(1/2)(3/2 + p -r)}.
\]
Letting $v = v_0 + h^{1/2} v_1 + h^q v_2$ and $E = E_0 + h^{1/2} E_1 +
h^q E_2$, we have solved
\begin{align*}
(2iX_0 + \Delta_{\Gamma^{\perp}} - R_2(z) -E) v
& =h^q \sigma| v_0|^p v_0 + h^{1/2}L v_0 + hL v_1 + \tilde{Q}_1 \\
& = h^q \sigma |v|^p v + h^{1/2} L v + \tilde{Q}_2,
\end{align*}
where
\[
\tilde{Q}_2 = h^q \tilde{Q}_1 - h^{1/2 + q} Lv_2 +\O(h^{1/2 + q} | v |^{p+1} ).
\]
We now have the remainder estimate
\[
\| \tilde{Q}_2 \|_{\dH^s} \leq C h^{-s/2+ \min \{ 1/2 + q, q + 3/4 + p/2 \} } = C
h^{-s/2 + 1/2 + q}.
\]

Recalling the definitions, $\phi: = e^{ix_0 /h} T_{h,0} v$ satisfies
\begin{itemize}
\item[(i)]
\[
\sWF_{h, 1/2 - \delta, 0} \,\phi \subset \{ | x | \leq \epsilon h^{1/2 -
  \delta}, | \xi | \leq \epsilon \};
\]
\item[(ii)]
\[
\| \phi \|_{L^2} \sim 1, \qquad \|D_{x_0}^\ell \phi \|_{L^2} \sim h^{-\ell},
\]
 and
\[
\| D_{x'}^\ell \phi \|_{L^2} \leq C h^{-\ell/2} ;
\]
\item[(iii)]
\begin{align*}
(\Delta + \lambda)\phi & = h^{-1} e^{i x_0/h} T_{h_0} h e^{-i x_0/h} T_{h_0}^{-1}
\Delta T_{h_0} T_{h_0}^{-1} \phi \\
& = h^{-1} e^{i x_0/h} T_{h_0} (2 i X_0 + \Delta_{\Gamma^\perp} - R_2
-E 
- h^{1/2} L) v \\
& = h^{-1} e^{i x_0/h} T_{h_0} ( \sigma h^q | v |^p v +
\tilde{Q}_2),
\end{align*}
or
\[
(\Delta + \lambda)\phi = \sigma | \phi |^p \phi + h^{\alpha(p)}Q,
\]
where $Q = h^{-1} e^{i x_0/h} T_{h_0} \tilde{Q}_2$ satisfies
$\| Q \|_{\dH^s} \leq C h^{-s}$ and 
where
\begin{equation}\label{error2}
\alpha(p):= -1/2 +q=\frac{1}{2} - p \left( \frac{d-1}{4} \right). 
\end{equation}
\end{itemize}

Once again, by construction we have 
\begin{eqnarray*}
\| X_0^j v_j \|_{L^2} \lesssim \| v_j \|_{L^2}
\end{eqnarray*}
for $j =0,1,2$.

Consider $v=v_{0}+h^{\frac12}v_{1}+h^{q}v_{2}$, $\phi_{h}=e^{ix_{0}/h}T_{h,0}v$ and $\lambda(h)=h^{-2}-E_{0}h^{-1}-E_{1}h^{-\frac12}-E_{2}h^{-1+q}$ defined previously, then we have proven
\begin{proposition}\label{PQ2}
Let $0<p\leq 2/(d-1)$ and $\alpha(p)$ be given by \eqref{error2}. Then the  function  $\phi_{h}$ satisfies the equation
 \begin{equation*}
 \big(\Delta+\lambda(h)\big)\phi_{h}=\sigma |\phi_{h}|^{p}\phi_{h}+h^{\alpha(p)}{Q(h)},
 \end{equation*}
 where ${Q(h)}$ is an error term which satisfies $\|Q(h)\|_{\dot{H}^{s}}\lesssim h^{-s}$, for all $s\geq 0$.
\end{proposition}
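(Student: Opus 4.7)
The plan is to follow the three–term WKB ansatz \eqref{SystemEqs2} appropriate to the regime $1/2<q\leq 1$, solving the equations order by order with the tools already developed in Propositions \ref{P:solution-to-0-eqn} and \ref{P:inhomog}, and then undo the rescaling/conjugation $T_{h,0}$, $e^{ix_0/h}$ to produce $\phi_h$.

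First, apply Proposition \ref{P:solution-to-0-eqn} to obtain $w_0$ and $E_0$ solving $(2ihX_0+h^2\Delta_{\Gamma^\perp}-R_2(x)-hE_0)w_0=\mathcal{O}(h^\infty)$, sharply localized on $\Gamma$; set $v_0=T_{h,0}^{-1}w_0$, which then satisfies the $h^0$ equation. Second, the $h^{1/2}$ equation has source $Lv_0$; since $L$ is a polynomial differential operator in $z$ and $\partial_z$ with smooth coefficients, and $v_0$ is a Gaussian-beam type function with sharp wavefront localization at $\Gamma$, the source $Lv_0$ remains smooth with compact semiclassical wavefront set contained in that of $v_0$. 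Proposition \ref{P:inhomog} (with $G=Lv_0$) then produces $v_1\in v_0^\perp$ and $E_1=\mathcal{O}(\|Lv_0\|_{L^2})$ solving the second equation of \eqref{SystemEqs2}, and the orthogonality is exactly what makes $(I-M(L))$ invertible on the complement of the kernel direction $w_{0,0}$.

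The heart of the argument is the $h^q$ equation, whose source is
\[
G_2=E_2v_0+h^{1-q}E_1v_1+h^{1/2}E_2 v_1+\sigma|v_0|^p v_0+h^{1-q}Lv_1.
\]
Unlike for $G_1$, the nonlinear piece $|v_0|^p v_0$ is not smooth when $p\notin 2\mathbb{N}$, so the hypothesis of Proposition \ref{P:inhomog} on frequency localization is not automatic. Here I would invoke Proposition \ref{P:v-symbolic} and Corollary \ref{C:v-symbolic} (Appendix \ref{A:harmosc}): cutting $G_2$ off with a symbol $\chi(h^\delta x, h^{1/2}D_x)$ produces an acceptable source (satisfying the hypotheses of Proposition \ref{P:inhomog}) modulo an error $\tilde{Q}_1$ with
\[
\|\tilde{Q}_1\|_{\dot{H}^r}\leq C h^{(1/2)(3/2+p-r)}.
\]
Apply Proposition \ref{P:inhomog} to the cut-off source (using that $v_1\perp v_0$ so $E_1 v_1$ gives no obstruction to solvability) to produce $v_2$ and the matching $E_2$.

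To conclude, set $v=v_0+h^{1/2}v_1+h^q v_2$ and $E=E_0+h^{1/2}E_1+h^qE_2$. Substituting into \eqref{EqToSolve} and collecting the terms that were \emph{not} cancelled order by order, one obtains a residual $\tilde{Q}_2$ consisting of three contributions: the cutoff error $h^q\tilde{Q}_1$, the overshoot $-h^{1/2+q}Lv_2$ from having only two terms beyond $v_0$, and the polynomial-in-$v$ remainder $\mathcal{O}(h^{1/2+q}|v|^{p+1})$ from expanding $|v|^p v$ around $v_0$. Taking the minimum exponent gives $\|\tilde{Q}_2\|_{\dot{H}^s}\leq C h^{-s/2+1/2+q}$. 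Finally, undo the conjugation: since $(\Delta+\lambda)\phi_h=h^{-1}e^{ix_0/h}T_{h,0}$ applied to the left-hand side of \eqref{EqToSolve}, the claim follows with $Q=h^{-1}e^{ix_0/h}T_{h,0}\tilde{Q}_2$, whose $\dot{H}^s$ norm is bounded by $h^{-s/2}\|\tilde{Q}_2\|_{\dot{H}^s}\lesssim h^{-s-1/2+q}$, matching the declared prefactor $h^{\alpha(p)}=h^{-1/2+q}$.

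The main obstacle is the low regularity of $|v_0|^p v_0$ for noninteger $p$: it prevents direct invocation of Proposition \ref{P:inhomog} and forces the frequency cutoff whose residue must then be balanced against the other Taylor-expansion errors. The exponent $1/2+q$ arises precisely from optimizing this trade-off via $\min\{1/2+q,\,q+3/4+p/2\}=1/2+q$, which is why it is essential that we take \emph{three} terms in the expansion rather than two: the $h^{1/2}$ correction $v_1$ removes the otherwise dominant geometric error $h^{1/2}Lv_0$ that would overwhelm the nonlinear term when $q>1/2$.
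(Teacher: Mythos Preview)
Your proposal is correct and follows essentially the same route as the paper: the three-term ansatz \eqref{SystemEqs2}, solving the $h^0$ and $h^{1/2}$ equations via Propositions \ref{P:solution-to-0-eqn} and \ref{P:inhomog} (no frequency cutoff needed for the smooth source $Lv_0$), then handling the $h^q$ equation by cutting off the nonlinear source $|v_0|^p v_0$ as in Corollary \ref{C:v-symbolic}, collecting the residual $\tilde Q_2$ with exponent $\min\{1/2+q,\,q+3/4+p/2\}=1/2+q$, and undoing the conjugation. Your discussion of why three terms are necessary when $q>1/2$ and of the role of the orthogonality $v_1\perp v_0$ is exactly the content of \S\ref{S:q-g-half}.
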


\subsection{Higher order expansion and proof of Thereorem \ref{T:0}} \label{sec:nthorder}
\subsubsection{The two dimensional cubic equation}
We first deal with the simpler case $d=2$, $p=2$ and $s=0$. As in the previous section, we define $q=1-p(d-1)/4$, thus for this choice we have $q = \frac12$
allowing us to match powers of the asymptotic
parameters in a canonical way.  The general algorithm for any smooth
nonlinearity arising when rescaling in the appropriate $H^s$ space will follow
similarly.

Using \eqref{EqToSolve} and the Taylor expansions of the geometric
components $g^{ij}$ and $\Gamma^i_{jk}$ for $i,j,k = 0,\dots,d$ in
\eqref{geomexps}, we look for an asymptotic series solution of the
form
\begin{equation*}
v  =  v_0 + h^{\frac12} v_1 + h^{1} v_2 + \dots + h^{m \frac12}
v_{m} + \dots + h^{N \frac12} v_N + \tilde{v} ,
\end{equation*}
for $N$ sufficiently large.

Then, we have the following equations:
\begin{eqnarray*}
h^0 : && (2 i X_0 + \Delta_{\Gamma^\perp} - R_2 (z) - E_0 ) v_0 = 0 ,
\\
h^{\frac12} : && (2 i X_0 + \Delta_{\Gamma^\perp} - R_2 (z) - E_0 ) v_1  =
E_1 v_0 + (i \delta_{jk} \tilde{\Gamma}^0_{jk1} - 2i \tilde{g}^{k0}
\p_{z_k} + R_3 (z)) v_0 + \sigma |v_0|^2 v_0 , \\
h^{1} : &&  (2 i X_0 + \Delta_{\Gamma^\perp} - R_2 (z) - E_0 ) v_2  =
E_2 v_0 + E_1 v_1 + (i \delta_{jk} \tilde{\Gamma}^0_{jk1} - 2i \tilde{g}^{k0}
\p_{z_k} + R_3 (z)) v_1 \\
&&+ (i \delta_{jk} \tilde{\Gamma}^0_{jk2} + R_4) v_0 + \sigma (2 |v_0|^2 v_1 + v_0^2 \bar{v}_1),  \\
&\vdots& \\
h^{\frac{m}2} : && (2 i X_0 + \Delta_{\Gamma^\perp} - R_2 (z) - E_0 ) v_m  =
\sum_{j=0}^{m-1} E_{m-j} v_j + \sigma (\sum_{j,k,l=0}^{m-1} (c^m_{jkl}
v_j v_k \bar{v}_l)) \\
&&+ \sum_{j=0}^{m-1} (f^{\p_{z_k}}_{j,m} (z) \p_{z_k} +
f^{X_0}_{j,m} X_0 + f^1_{j,m} (z) ) v_j ,  \\
&\vdots& , \\
h^{\frac{N}2} : && (2 i X_0 + \Delta_{\Gamma^\perp} - R_2 (z) - E_0
- \sum_{j=1}^N h^{\frac{j}{2}} E_j) v_N  =
\sum_{j=0}^N E_{N-j} v_j + \sigma (\sum_{j,k,l=0}^{N-1} (c^N_{jkl} v_j
v_k \bar{v}_l)) \\
&&+ \sum_{j=0}^{N-1} (f^{\p_{z_k}}_{j,N} (z) \p_{z_k} +
f^{X_0}_{j,N} X_0+ f^1_{j,N} (z) ) v_j + P_N v,
\end{eqnarray*}
where 
\begin{eqnarray*}
f^{\p_{z_k}}_{j,m} &=& \O_N (|z|^{m-j}), \\
f^{X_0}_{j,m} &=& \O_N (|z|^{m-j}), \\
f^1_{j,m} &=& \O_N (|z|^{m-j})
\end{eqnarray*}
for $j,m = 0, \dots, N$ and
\begin{eqnarray*}
P_{N} & = & \O(h^{N/2-2} | z |^N) + X_0 X_0 + \O(h^{N/2} | z |^{N} )
h^{-1/2} \partial_{z_k} X_0 \\
&&+ \O(h^{N/2-3/2}|z|^N) \partial_{z_k} + \O(h^{N/2-1} |
z |^N) + \O(h^{N/2} | z |^N) X_0 + \O(h^{N/2} | z |^{N} )
h^{-1/2} \partial_{z_k}.
\end{eqnarray*}
Note that all constants have implicit dependence upon $N$ relating the
number of terms in the expansion at each order.  The expansion is
valid provided first of all that
\begin{eqnarray*}
\sum_{j=1}^N h^{\frac{j}{2}} E_j < E_0
\end{eqnarray*}
in order to justify the solvability of the $\O (h^{N/2})$ equation.

\begin{rema}
We note here that
in this expansion, the sign of $\sigma$ can effect the sign and value
of $E_1$, which will impact the remaining asymptotic expansion and in
particular the order of quasimode expansion possible.  It is
possible that the focussing/defocussing problem enters in to the
stability analysis of these quasimodes through this point.  
\end{rema}

Applying Proposition \ref{P:inhomog} at each asymptotic order and
bounds similar to those in Lemma \ref{L:P-error-est} at order $h^{N/2}$ as in
Section \ref{S:q-leq-half}, we have by a simple calculation that $v$
is a quasimode for the nonlinear elliptic equation with remainder $Q_N$
such that
\begin{eqnarray*}
\| Q_N \|_{\dot{H}^s} \leq C_N h^{-s-1+N/2} .
\end{eqnarray*}
As a result, for sufficiently smooth nonlinearities, one is capable of
constructing higher order asymptotic expansions and hence a quasimode
of higher order accuracy.

\subsubsection{The general case}
Let $d\geq 2$, $p\in 2\NN$ and $s\geq0$. We define here $q_{s}=1+p(s-\frac{d-1}4)$.  Assume that $p(\frac{d-1}4-s)<1$, or equivalently  that $q_{s}>0$.  Firstly, write  $w=h^{s}v$. Then $v$ has to satisfy \eqref{EqToSolve} but where the power in front of the nonlinearity is $h^{q_{s}}$.  
 Hence, we can look for $v$ and $E$ of the form
\begin{equation*}
v  = \sum_{j,\ell=0}^{N}h^{j/2+\ell q_{s} }\,v_{j,\ell}+\tilde{v} \quad \text{and}\quad  E=\sum_{j,\ell=0}^{N}h^{j/2+\ell q_{s} }\,E_{j,\ell}+\widetilde{E}.
\end{equation*}
Since $q_{s}>0$, the nonlinearity does not affect the equation giving $v_{0,0}$, and we have     
\begin{equation*}
(2 i X_0 + \Delta_{\Gamma^\perp} - R_2 (z) - E_{0,0} ) v_{0,0} = 0,
\end{equation*}
which is the same equation as before. Then, using Taylor expansions, we write all the equations, similarly to the previous case, in powers of $h^{j/2+\ell q_{s}}$. Again, we can solve each equation and obtain bounds of the solutions and of the error terms. Moreover, it is clear that we can go as far as we want in the   asymptotics, so that we can construct a $\mathcal{O}(h^{\infty})$ quasimode, and this proves Theorem \ref{T:0}. 

\section{Error estimates}

\subsection{The regular case}

In this section, we assume that $p$ is an even integer.\\

Fix an integer $k>d/2$   (the fact that $k$ is an integer is not necessary). We then define the semiclassical norm    
\begin{equation*}
\|f\|_{H^k_{h}}=\|\big(1-{h}^{2}\Delta\big)^{k/2}f\|_{L^2(M)}.
\end{equation*}
In the previous section we have shown the following : Given $\alpha \in \RR$, there exist  two functions $\varphi_{h}\in H^{k}(M)$ and ${Q}(h)\in H^{k}(M)$  and $\lambda(h) \in \RR$ so that 
\begin{equation*}
\big(\Delta+\lambda(h)\big) \varphi_{h} +\sigma |\varphi_{h}|^{p}\varphi_{h} +h^{\alpha} {Q}(h).
\end{equation*}
 Moreover, microlocally the function $\varphi_{h}$ takes the form 
\begin{equation}\label{dephi}
 \varphi_{h}(\sigma,x',h)=h^{-\frac{d-1}4+s}e^{i\sigma/h}f(\sigma,h^{-1/2}x',h),
\end{equation}
and we have  $\| {Q}(h)\|_{H^{k}_{h}}\leq C$.\\
We set $\ds u_{app}(t,\cdot)=e^{-it \lambda(h)} \varphi_{h}$. Then if we denote by $\widetilde{Q}(h):=e^{-it\lambda}Q(h)$, the following equation is satisfied
\begin{equation}\label{equapp}
i\partial_{t}  u_{\text{app}}-\Delta  u_{\text{app}}=\sigma | u_{\text{app}}|^{p} u_{\text{app}}+h^{\alpha}\widetilde{Q}(h).
\end{equation}

\begin{prop}\label{properreur}
Let $s\geq 0$. Consider the function $\phi_{h}$ given by
\eqref{dephi}. Let $u$ be the solution of
\begin{equation}\label{nlsapp}
\left\{
\begin{aligned}
&i \partial_t u-\Delta u=\sigma|u|^{p} u,\\
&u(0,\cdot)=\varphi_{h}.
\end{aligned}
\right.
\end{equation}
Assume that  $\ds \alpha>\frac{d+1}4+s+p(-\frac{d-1}4+s)$. Then there exists $C>0$ and $c_{0}>0$ independent of $h$ so that   
$$\|u- u_{\text{app}}\|_{L^{\infty}([0,T_{h}];H^{s}(M))}\leq C h^{(d+1)/4},$$
for $0\leq T_{h}\leq c_{0} \,h^{p(\frac{d-1}4-s)}\ln(\frac1h)$.
\end{prop}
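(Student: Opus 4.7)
The plan is to study the difference $w = u - u_{\text{app}}$, which by \eqref{equapp}--\eqref{nlsapp} solves
\begin{equation*}
(i\partial_t - \Delta) w = \sigma\bigl(|u|^p u - |u_{\text{app}}|^p u_{\text{app}}\bigr) - h^{\alpha}\widetilde Q(h), \qquad w(0)=0.
\end{equation*}
Since $e^{-it\Delta}$ is an isometry on $H^s(M)$, Duhamel gives
\begin{equation*}
\|w(t)\|_{H^s}\le C\int_0^t \Bigl(\bigl\||u|^p u-|u_{\text{app}}|^p u_{\text{app}}\bigr\|_{H^s} + h^{\alpha}\|\widetilde Q(h)\|_{H^s}\Bigr)\,ds,
\end{equation*}
and the source term contributes at most $C h^{\alpha}T_h$. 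I will close everything by a bootstrap assumption of the form $\|w(t)\|_{H^s}\le h^{(d+1)/4}$.

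To control the nonlinear difference I use that $p$ is an even integer, so $|u|^p u-|u_{\text{app}}|^p u_{\text{app}}$ is a polynomial in $(u,\bar u,u_{\text{app}},\bar u_{\text{app}})$ in which every monomial contains at least one factor of $w$ or $\bar w$. Applying the Kato--Ponce (tame) product estimate in $H^s(M)$ each such monomial is controlled by a product of $L^\infty$ norms of the other factors times $\|w\|_{H^s}$. From the explicit form \eqref{dephi} one reads off $\|u_{\text{app}}\|_{L^\infty}\lesssim h^{-(d-1)/4+s}$, and under the bootstrap assumption $\|w\|_{L^\infty}\lesssim h^{-(d-1)/4+s}$ as well (for $s>d/2$ this is immediate from Sobolev embedding; for general $s\ge 0$ it comes from the Burq--G\'erard--Tzvetkov Strichartz estimates on $M$, in the same manner as \S\ref{S:toy}). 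This yields
\begin{equation*}
\bigl\||u|^p u - |u_{\text{app}}|^p u_{\text{app}}\bigr\|_{H^s}\lesssim h^{-p((d-1)/4-s)}\|w\|_{H^s}.
\end{equation*}

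Combining, I obtain the integral inequality
\begin{equation*}
\|w(t)\|_{H^s}\le C h^{\alpha}\,t + C h^{-p((d-1)/4-s)}\int_0^t\|w(s)\|_{H^s}\,ds,
\end{equation*}
and Gronwall gives $\|w(t)\|_{H^s}\le C h^{\alpha}\,t\,\exp\!\bigl(C t\,h^{-p((d-1)/4-s)}\bigr)$. For $T_h=c_0 h^{p((d-1)/4-s)}\log(1/h)$ with $c_0$ sufficiently small the exponential factor is at most $h^{-(\alpha-(d+1)/4-s)}$, so that $\|w(t)\|_{H^s}\lesssim h^{(d+1)/4}$ on $[0,T_h]$. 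The hypothesis $\alpha>\tfrac{d+1}{4}+s+p(s-\tfrac{d-1}{4})$ is exactly the strict inequality needed so that this final bound beats the bootstrap threshold with a positive margin, closing the argument. A standard Picard iteration, run in parallel with this bootstrap, ensures that the exact solution $u$ exists on the whole interval $[0,T_h]$.

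The main obstacle is the $L^\infty$ control on $u$ (or equivalently on $w$) that enters the nonlinear estimate, since for $s\le d/2$ Sobolev embedding is insufficient and one must invoke Strichartz estimates on the compact manifold; once this is in hand the rest is the Gronwall bookkeeping described above, the log factor in $T_h$ reflecting precisely the exponential in the Gronwall bound.
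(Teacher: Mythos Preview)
Your strategy (Duhamel, polynomial expansion of the nonlinearity, Gronwall with a bootstrap) is the right one, and for $s>d/2$ it goes through essentially as you say. The gap is for $0\le s\le d/2$. The tame Kato--Ponce/Leibniz rule does \emph{not} let you place the $H^s$ norm exclusively on a $w$ factor: it produces a sum in which each factor in turn carries $H^s$, so terms such as $\|u_{\text{app}}\|_{H^s}\,\|w\|_{L^\infty}\,\|u_{\text{app}}\|_{L^\infty}^{p-1}$ are unavoidable. To feed these into a Gronwall inequality for $\|w(t)\|_{H^s}$ you must control $\|w\|_{L^\infty}$ by $\|w\|_{H^s}$, which fails precisely when $s\le d/2$. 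Your appeal to Strichartz does not repair this: the Burq--G\'erard--Tzvetkov estimates yield $L^r_t W^{s_1,q}_x$ control with $r<\infty$ and a derivative loss, never pointwise-in-$t$ $L^\infty_x$ bounds. In the paper Strichartz is used only in the non-smooth setting (Proposition~\ref{properreur2}), via a genuinely different $Y^s$-space argument that is restricted to $d=2$, $s=0$ and gives a much shorter $T_h$.

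The paper circumvents the whole issue by running the energy estimate in a \emph{semiclassical} Sobolev norm $\|f\|_{H^k_h}=\|(1-h^2\Delta)^{k/2}f\|_{L^2}$ with a fixed integer $k>d/2$. Two things are gained. First, $H^k_h$ is an algebra up to an explicit $h^{-d/2}$ loss (inequality~\eqref{produithh2}), so $\|w^j\|_{H^k_h}\lesssim h^{-d(j-1)/2}\|w\|_{H^k_h}^j$ and no separate $L^\infty$ control of $w$ is needed. Second, because $(1-h^2\Delta)^{k/2}$ does not penalize the $e^{ix_0/h}$ oscillation in $u_{\text{app}}$, the ``bad'' Leibniz term $\|(1-h^2\Delta)^{k/2}u_{\text{app}}^{\,m}\|_{L^\infty}$ has the same $h$-scaling as $\|u_{\text{app}}^{\,m}\|_{L^\infty}$ (estimate~\eqref{nlsapp4}). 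One then bootstraps $\|w\|_{H^k_h}\le Ch^{(d+1)/4+s}$, applies Gronwall in $H^k_h$, and only at the end converts to $H^s$ via $\|w\|_{H^s}\le h^{-s}\|w\|_{H^k_h}$. This last conversion is exactly where the extra ``$+s$'' in the hypothesis on $\alpha$ originates --- a term your direct-$H^s$ computation does not see, which is another symptom that the argument as written is not the correct one for general $s\ge 0$.
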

This result shows that $u_{app}$ is a good approximation of $u$, provided that the quasimode $\phi_{h}$ has been computed at a sufficient order $\alpha$.
\begin{proof} Here we follow the main lines of \cite[Corollary 3.3]{Tho3}.
With the Leibniz rule and interpolation we check that for all
$f\in H^{k}(M)$ and $g\in W^{k,\infty}(M)$
\begin{equation}\label{produithh}
\|f\,g\|_{H^k_{h}}\lesssim
\|f\|_{H^k_{h}}\|g\|_{L^{\infty}(M)}+\|f\|_{L^2(M)}
\|\big(1-{h}^{2}\Delta\big)^{k/2}g\|_{L^{\infty}(M)}.
\end{equation}
Moreover, as $k>d/2$, for all $f_1,f_2\in {H^k(M)}$
\begin{equation}\label{produithh2}
\|f_1\,f_2\|_{H^k_{h}}\lesssim   h^{-d/2}\|f_1\|_{H^k_{h}}\|f_2\|_{H^k_{h}}.
\end{equation}
Let $u$ be the solution of \eqref{nlsapp} and define
$w=u-u_{\text{app}}$. Then, by \eqref{equapp}, $w$ satisfies
\begin{equation}\label{nlsapp2}
\left\{
\begin{aligned}
&i \partial_t w-\Delta w   = \sigma\big( |w+u_{\text{app}}|^{p}
  (w+u_{\text{app}})- |u_{\text{app}}|^{p} u_{\text{app}} \big)-h^{\alpha}\widetilde{Q}(h)\\
&w(0,x)=0.
\end{aligned}
\right.
\end{equation}
We expand the r.h.s. of \eqref{nlsapp2}, apply the operator
$\big(1-{h}^{2}\Delta\big)^{k/2}$ to the equation, and take
the $L^2$- scalar product with
$\big(1-{h}^{2}\Delta\big)^{k/2}w$. Then we obtain
\begin{equation}\label{nlsapp5}
\frac{\text{d}}{\text{d}t}\|w\|_{H^k_h}\lesssim \sum_{j=1}^{p+1}
 \|w^j\,u^{p+1-j}_{\text{app}}\|_{H^k_h}+h^{\alpha}.
\end{equation}
We now have to estimate the terms
$\|w^j\,u^{p+1-j}_{\text{app}}\|_{H^k_h},$
for $1\leq j\leq p+1$. From \eqref{produithh} we deduce 
\begin{equation}\label{nlsapp3}
\|w^j\,u^{p+1-j}_{\text{app}}\|_{H^k_h}\lesssim
\|w^j\|_{H^k_h}\|u^{p+1-j}_{\text{app}}\|_{L^{\infty}(M)}+\|w^j\|_{L^2(M)}\|\big(1-{h}^{2}\Delta\big)^{k/2}u^{p+1-j}_{\text{app}}\|_{L^{\infty}(M)}.
\end{equation}
By \eqref{produithh2}, and as we have
\begin{equation}\label{nlsapp4}
\|u^{p+1-j}_{\text{app}}\|_{L^{\infty}(M)}\lesssim  h^{(p+1-j)(-\frac{d-1}4+s)},\quad
\|\big(1-{h}^{2}\Delta\big)^{k/2}u^{p+1-j}_{\text{app}}\|_{L^{\infty}(M)}\lesssim h^{(p+1-j)(-\frac{d-1}4+s)},
\end{equation}
thus inequality \eqref{nlsapp3} yields
\begin{equation*}
\|w^j\,u^{p+1-j}_{\text{app}}\|_{H^k_h}\lesssim  h^{-d(j-1)/2} h^{(p+1-j)(-\frac{d-1}4+s)}\|w\|^{j}_{H^k_h}.
\end{equation*}
Therefore, from \eqref{nlsapp5} we have
\begin{equation*}
\frac{\text{d}}{\text{d}t}\|w\|_{H^k_h}\lesssim h^{p(-\frac{d-1}4+s)}\|w\|_{H^k_h}
+  h^{-dp/2}\|w\|^{p+1}_{H^k_h}+h^{\alpha}.
\end{equation*}
Observe that $\|w(0)\|_{H^k_h}=0$. Now, for times $t$ so that 
\begin{equation}\label{bootstrap}
h^{-dp/2}\|w\|^{p+1}_{H^k_h} \lesssim  h^{p(-\frac{d-1}4+s)}\|w\|_{H^k_h},
\end{equation}
i.e. $\ds \|w\|_{H^{k}_{h}}\leq C h^{(d+1)/4+s}$, we can remove the nonlinear term in \eqref{nlsapp3}, and by the
Gronwall Lemma,
 \begin{equation}\label{bootstrap2}
\|w\|_{H^k_h}\leq C h^{\alpha -p(-\frac{d-1}4+s)}e^{Ch^{p(-\frac{d-1}4+s)}t}.
\end{equation}
If $c_{0}>0$ is small enough, and $t\leq c_{0} h^{p(\frac{d-1}4-s)}\ln \frac1h$, then 
\begin{equation*}
 C h^{\alpha -p(-\frac{d-1}4+s)}e^{Ch^{p(-\frac{d-1}4+s)}t}\leq  C h^{(d+1)/4+s},
\end{equation*}
so that   inequality \eqref{bootstrap} is satisfied. By the usual bootstrap argument, we infer that  for all $$t\leq c_{0} h^{p(\frac{d-1}4-s)}\ln \frac1h$$ we have $$\ds \|w\|_{H^{k}_{h}}\leq C h^{(d+1)/4+s}.$$

 Finally, by interpolation we get   $\ds \|w\|_{H^{s}}  \leq  h^{-s}\|  w\|_{H_{h}^{k}}$, hence the result.
 \end{proof}~
 
We are now ready to complete the proof of Theorem \ref{T:1}. Consider $u_{h}$, the exact solution to \eqref{nls} with initial condition $\phi_{h}$, then by the previous proposition and the description of $u_{app}$, we can write
\begin{align}
\| u_{h} \|_{L^\infty([0,T_h]; L^2( M \setminus U_{h^{1/2-\delta
      }}))} & \leq \| u_{app} \|_{L^\infty([0,T]; L^2( M \setminus U_{h^{1/2-\delta}}))}  + \| u_{h} - u_{app} \|_{L^\infty([0,T]; L^2( M \setminus U_{h^{1/2-\delta}}))} \nonumber\\
& = \O(h^{\infty}) + \O(h^{(d+1)/4})=\O(h^{(d+1)/4}),\label{arg}
\end{align}
 which was the claim.

\subsection{The non regular case and $d=2$}~

In this section we compute the error estimate in the case of a non smooth nonlinearity in dimension $d=2$. Moreover we restrict ourselves to the case $s=0$ in \eqref{dephi} (case of an $L^{2}$-normalized initial condition).
\begin{prop}\label{properreur2}
Let $\varphi_{h}$ be the function given by \eqref{dephi} with $s=0$. Let $u$ be solution of
\begin{equation*} 
\left\{
\begin{aligned}
&i \partial_t u-\Delta u=\sigma|u|^{p} u,\\
&u(0,\cdot)=\varphi.
\end{aligned}
\right.
\end{equation*}
Let $\epsilon>0$. For  $p\in (0,4)\backslash \{1\}$, we set $T_{h}=h^{p}$, and in the case $p=1$, $T_{h}=h^{1+\epsilon}$. Then there exists $C>0$ and $\nu>0$ independent of $h$ so that   
$$\|u- u_{\text{app}}\|_{L^{\infty}([0,T_{h}];L^{2}(M))}\leq Ch^{\nu}.$$
\end{prop}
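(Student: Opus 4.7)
The strategy is to analyze the difference $w := u - u_{\text{app}}$ by a Strichartz-driven bootstrap on the short time interval $[0, T_h]$. Subtracting \eqref{equapp} from the Cauchy problem for $u$, one sees that $w(0) = 0$ and
\begin{equation*}
i\partial_t w - \Delta w = \sigma\bigl(|u|^p u - |u_{\text{app}}|^p u_{\text{app}}\bigr) - h^{\alpha}\widetilde{Q}(h),
\end{equation*}
with $\alpha = \alpha(p)$ determined by \eqref{error1}--\eqref{error2} and $\|\widetilde{Q}(h)\|_{L^2(M)} \lesssim 1$. Integrating the error contribution in time gives a bound of order $T_h\, h^{\alpha(p)}$ in $L^1_{[0,T_h]} L^2_x$, which is a positive power of $h$ for $T_h = h^p$ (Case~1 yields $\alpha+p = 1+p/2$, Case~2 yields $\alpha+p = 1/2+3p/4$); this is what dictates the choice of time interval in the statement.

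The main tool is the semiclassical Strichartz estimate on the compact surface $M$ from Burq--G\'erard--Tzvetkov \cite{BGT-comp}, which for frequency-localized data at scale $1/h$ reads
\begin{equation*}
\|w\|_{L^\infty_{[0,T_h]} L^2_x} + h^{1/r}\|w\|_{L^r_{[0,T_h]} L^\rho_x} \lesssim \bigl\|F\bigr\|_{L^1_{[0,T_h]} L^2_x}
\end{equation*}
for any admissible pair $(r,\rho)$ on dimension $2$ (i.e.\ $\frac{2}{r}+\frac{2}{\rho}=1$, $r>2$). The pointwise majorization $\bigl||u|^p u - |u_{\text{app}}|^p u_{\text{app}}\bigr| \lesssim (|u_{\text{app}}|^p + |w|^p)|w|$ splits the nonlinear term into two pieces. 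The explicit form of $u_{\text{app}}$ gives $\|u_{\text{app}}\|_{L^\infty(M)} \lesssim h^{-1/4}$, so for the linear-in-$w$ piece,
\begin{equation*}
\bigl\| |u_{\text{app}}|^p w \bigr\|_{L^1_{[0,T_h]} L^2_x} \lesssim T_h\, h^{-p/4}\, \|w\|_{L^\infty_{[0,T_h]} L^2_x} = h^{3p/4}\,\|w\|_{L^\infty_{[0,T_h]} L^2_x}
\end{equation*}
when $T_h = h^p$, a tame multiplier for small $h$. For the superlinear piece $|w|^{p+1}$, H\"older in time together with the Strichartz $L^r_t L^\rho_x$ control with $\rho = 2(p+1)$ (and associated BGT loss $h^{-(p+1)/r}$) bounds $\bigl\|\,|w|^{p+1}\,\bigr\|_{L^1_t L^2_x}$ by $T_h^{\theta}\,\|w\|_{L^r_t L^\rho_x}^{p+1}$ for some $\theta>0$, and by Strichartz the latter is estimated in terms of the RHS itself, producing a term of the form $C\,h^{-\kappa}\,T_h^\theta\, \|w\|_{L^\infty L^2}^{p+1}$ with $\kappa$ explicitly computable from the choice of pair.

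A continuity argument then closes the bootstrap: assuming $\|w\|_{L^\infty_{[0,T]} L^2} \leq h^\nu$ on a subinterval for a small $\nu>0$, each of the three contributions (linear-in-$w$, superlinear, and quasimode error) is bounded by a strictly positive power of $h$ times $\|w\|_{L^\infty L^2}$ or by a strictly positive power of $h$ alone, so the hypothesis self-improves and extends to $T = T_h$, yielding the conclusion with some $\nu>0$. The principal obstacle is the non-smooth nonlinearity when $p \notin 2\NN$: one cannot Taylor-expand $|u|^p u$ and must rely on the pointwise H\"older-type majorization together with BGT's sub-quintic Strichartz framework. The case $p = 1$ is even more delicate because $|u|u$ fails to be $C^2$, so the Strichartz arithmetic forces a slightly smaller time window $T_h = h^{1+\epsilon}$ in order to leave a positive margin for the bootstrap, which is why this exponent is singled out in the statement.
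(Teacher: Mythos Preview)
Your outline has the right overall shape --- Strichartz plus a bootstrap on $w=u-u_{\text{app}}$ --- but there is a genuine gap in how you invoke the Burq--G\'erard--Tzvetkov estimate. You write the inhomogeneous Strichartz inequality with a loss $h^{1/r}$ ``for frequency-localized data at scale $1/h$''; that rewriting is only legitimate if $w$ is spectrally localized at frequency $\sim h^{-1}$. The approximate solution $u_{\text{app}}$ is so localized by construction, but nothing tells you the exact nonlinear solution $u$ remains localized, and hence $w=u-u_{\text{app}}$ need not be. Without that localization the BGT estimate on a surface is
\[
\|w\|_{L^r_T L^\rho}\ \lesssim\ \|F\|_{L^1_T H^{1/r}},
\]
i.e.\ the loss is a genuine $1/r$ derivative on the forcing. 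Your pointwise majorization then no longer closes: to place $|u_{\text{app}}|^p w$ or $|w|^{p+1}$ in $L^1_T H^{1/r}$ you would need a fractional product rule involving the non-smooth factor $|u_{\text{app}}|^p$ (or $|w|^p$), and the pointwise bound alone does not give that.

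The paper circumvents this by working from the outset in $H^s$ for some $1-\tfrac1r<s<1$, using the mixed norm $Y^s=C_TH^s\cap L^r_T W^{s-1/r,q}$ (which embeds in $L^r_TL^\infty$), and controlling the nonlinear difference in $H^s$ via an estimate of Cazenave--Fang--Han \cite{CaFaHa}:
\[
\bigl\||w+u_{\text{app}}|^p(w+u_{\text{app}})-|u_{\text{app}}|^p u_{\text{app}}\bigr\|_{H^s}
\ \lesssim\ \|u_{\text{app}}\|_{H^s}\|w\|_{L^\infty}^p+\|w\|_{H^s}\bigl(\|u_{\text{app}}\|_{L^\infty}^p+\|w\|_{L^\infty}^p\bigr),
\]
with a variant for $p>1$. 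The bootstrap is then run in the semiclassical norm $\|w\|_{Y^s_h}=h^{-s}\|w\|_{L^\infty_TL^2}+\|w\|_{Y^s}$, and the argument actually splits into three regimes: $0<p<1$ (where the sublinear term $\|w\|^p$ is absorbed by Young's inequality), $1<p<4$, and the borderline $p=1$. Your explanation of the $p=1$ anomaly (``$|u|u$ fails to be $C^2$'') is not the operative reason; rather, the exponents in the bootstrap become marginal there, and one must shrink the window to $T_h=h^{1+\epsilon}$ to keep a positive power of $h$.
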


\begin{rema}
Note the difference between the results of Propositions \eqref{properreur} (when $s=0$ and $d=2$) and \eqref{properreur2}. In the first case, we have $T_{h}$ of order $h^{p/4}$, which is better than $T_{h}\sim h^{p}$ obtained in the second result. However, in this latter result, there is no restrictive condition on the size of the error term in the equation.
\end{rema}
 \begin{proof} First, we follow  the strategy of Burq-G\'erard-Tzvetkov \cite[Section 3.]{BGT-comp}
Let $0<p<4$, choose     $r>\max(p,2)$  and take  $\ds 1-\frac1r<s<1$ (there will be an additional constraint on $s$ in the sequel). Then take $q$ so that $\frac1r+\frac{1}q=\frac{1}2$ and $s_{1}=s-\frac1r$. For $T>0$ define the space
 \begin{equation*}
 Y^{s}=\mathcal{C}\big([0,T];H^{s}(M)\big)\cap L^{r}\big([0,T];W^{s_{1},q}(M)\big),
 \end{equation*}
which is endowed with the norm
\begin{equation*}
\|u\|_{Y^{s}}=\max_{0\leq t\leq T}\|u(t)\|_{H^{s}}+\|(1-\Delta)^{s_{1}/2}u\|_{L^{r}([0,T];L^{q})}.
\end{equation*}
By the Sobolev embeddings, we have $Y^{s}\subset L^{r}\big([0,T],L^{\infty}\big)$.
Now, define $w=u-u_{app}$. Then $w$ satisfies the equation 
\begin{equation}\label{nlsapp*}
\left\{
\begin{aligned}
&i \partial_t w-\Delta w   = \sigma\big( |w+u_{\text{app}}|^{p}
  (w+u_{\text{app}})- |u_{\text{app}}|^{p} u_{\text{app}} \big)-h^{\alpha(p)}\widetilde{Q}(h),\\
&w(0,x)=0,
\end{aligned}
\right.
\end{equation}
with  $\alpha(p)=1-p/2$ when $2\leq p\leq 4$ and   $\alpha(p)=1/2-p/4$ when $0\leq p\leq 2$ (see \eqref{error1} and \eqref{error2}) and $\|\widetilde{Q}(h)\|_{H^{s}}\leq Ch^{-s}$.\\[5pt]
$\bullet$ Case $0<p<1$. In \cite[Estimate (2.25)]{CaFaHa}, Cazenave, Fang and Han prove that for all $0\leq s<1$
\begin{equation}\label{est.CFH}
\big\||w+u_{app}|^{p}(w+{u_{app}})-|u_{app}|^{p}u_{app}\big\|_{H^{s}}\leq 
C\|u_{app}\|_{H^{s}}\|w\|^{p}_{L^{\infty}}+C\|w\|_{H^{s}}\big(\|u_{app}\|^{p}_{L^{\infty}}+\|w\|^{p}_{L^{\infty}}\big).
 \end{equation}
Indeed, in \cite{CaFaHa}, the estimate is not stated exactly with these indices, but the proof still holds true. Moreover, in \cite{CaFaHa}, \eqref{est.CFH}
 is proved for $x\in \RR^{d}$, but the inequality can be adapted to the case of a compact manifold thanks to a partition of unity argument.\\[5pt]
Assume that $w$ satisfies the equation
\begin{equation*}
i \partial_t w-\Delta w   =F,\quad w(0,x)=0,
\end{equation*}
then  with the Strichartz estimates of \cite{BGT-comp}, the estimate $\|w\|_{Y^{s}}\leq C\|F\|_{L^{1}_{T}H^{s}}$ holds true.
Thus, with the notation $\ds \gamma=1-\frac{p}r$, with \eqref{nlsapp*} and \eqref{est.CFH} we have
\begin{eqnarray}
\|w\|_{Y^{s}}&\leq& C\int_{0}^{T}\|u_{app}\|_{H^{s}}\|w\|^{p}_{L^{\infty}}+C\int_{0}^{T}\|w\|_{H^{s}}\big(\|u_{app}\|^{p}_{L^{\infty}}+\|w\|^{p}_{L^{\infty}}\big)+CTh^{\alpha(p)}\|\widetilde{Q}\|_{L^{\infty}_{T}H^{s}}\nonumber\\
&\leq &CT^{\gamma}\|u_{app}\|_{L_{T}^{\infty}H^{s}}\|w\|^{p}_{Y^{s}}+C\|w\|_{Y^{s}}\big(T\|u_{app}\|^{p}_{L_{T}^{\infty}L^{\infty}}+T^{\gamma} \|w\|^{p}_{Y^{s}}\big)+CTh^{\alpha(p)}\|\widetilde{Q}\|_{L^{\infty}_{T} H^{s}}\nonumber\\
&\leq &CT^{\gamma}h^{-s}\|w\|^{p}_{Y^{s}}+C\|w\|_{Y^{s}}\big(Th^{-p/4}+T^{\gamma} \|w\|^{p}_{Y^{s}}\big)+CTh^{-s+1/2-p/4}.\label{bo1}
\end{eqnarray}
 Similarly, we obtain
\begin{equation}\label{bo01}
\|w\|_{L^{\infty}_{T}L^{2}}\leq CT^{\gamma}\|w\|^{p}_{Y^{s}}+C\|w\|_{L^{\infty}_{T}L^{2}}\big(T\|u_{app}\|^{p}_{L_{T}^{\infty}L^{\infty}}+T^{\gamma} \|w\|^{p}_{Y^{s}}\big)+CTh^{1/2-p/4}.
\end{equation}
Therefore, if we define the semiclassical norm $\|\;\;\|_{Y^{s}_{h}}$ by
\begin{equation}\label{semic}
\|u\|_{Y^{s}_{h}}=h^{-s}\|u\|_{L^{\infty}_{T}L^{2}}+\|u\|_{Y^{s}},
\end{equation}
thanks to \eqref{bo1} and \eqref{bo01} we infer 
\begin{equation}\label{borne}
\|w\|_{Y^{s}_{h}}\leq CT^{\gamma}h^{-s}\|w\|^{p}_{Y_{h}^{s}}+C\|w\|_{Y^{s}_{h}}\big(Th^{-p/4}+T^{\gamma} \|w\|^{p}_{Y_{h}^{s}}\big)+CTh^{-s+1/2-p/4}.
\end{equation}
Next we use the inequality $\ds ab\leq \frac1{p_{1}}\epsilon^{p_{1}}a^{p_{1}}+\frac1{p_{2}}\epsilon^{-p_{2}}b^{p_{2}}$ which holds for $a,b,\epsilon>0$ and $\ds \frac1{p_{1}}+\frac1{p_{2}}=1$. With a suitable choice of  $\epsilon$ and $p_{1}$ (here we use that $0<p\leq 1$) we get
\begin{equation}\label{bo2}
CT^{\gamma}h^{-s}\|w\|^{p}_{Y_{h}^{s}}\leq \frac12 \|w\|_{Y_{h}^{s}}+C(T^\gamma h^{-s})^{1/(1-p)}.
\end{equation}
Now, re-inject \eqref{bo2} into \eqref{borne} and obtain
\begin{equation}\label{bo3}
\|w\|_{Y_{h}^{s}}\leq C\|w\|_{Y_{h}^{s}}\big(Th^{-p/4}+T^{\gamma} \|w\|^{p}_{Y_{h}^{s}}\big)+C(T^\gamma h^{-s})^{1/(1-p)}+CTh^{-s+1/2-p/4}.
\end{equation}
We now perform a bootstrap argument :
Fix $\epsilon >0$ and set  $T_{h}=h^{p}$. Fix $\ds 1-\frac1r<s<1-\frac{p}r$. Then  it is possible to pick $\nu>0$ small enough so that $\ds \gamma=1-\frac{p}r>s+\frac{\nu(1-p)}{p}$. Assume  that 
 \begin{equation}\label{bo4}
 \|w\|_{Y_{h}^{s}}\leq h^{-s+\nu}.
 \end{equation}
 Then
 \begin{equation*}
 T_{h}h^{-p/4}+T_{h}^{\gamma} \|w\|^{p}_{Y_{h}^{s}}\leq h^{3p/4}+h^{p(\gamma-s+\nu)},
 \end{equation*}
 which tends to 0 with $h$, thanks to the assumption made on $\gamma$. Hence, for $h>0$ small enough, with \eqref{bo3} we get 
 \begin{equation*}
 \|w\|_{Y_{h}^{s}}\leq C(T_{h}^\gamma h^{-s})^{1/(1-p)}+CT_{h}h^{-s+1/2-p/4} \leq Ch^{(p\gamma-s)/(1-p)}+Ch^{-s+3p/4+1/2}.
 \end{equation*}
  Finally, observe that $-s+3p/4+1/2>-s+\nu$, and the  assumption $\ds \gamma>s+\frac{\nu(1-p)}{p}$ is equivalent to $(p\gamma-s)/(1-p)>-s+\nu$. Hence  for $h>0$ small enough, we  recover  $\ds  \|w\|_{Y_{h}^{s}}\leq \frac12 h^{-s+\nu}$, and by the usual bootstrap argument,  the condition \eqref{bo4} holds for $T_{h}=h^{p}$. Now  we can deduce the bound $\|u\|_{L^{\infty}_{T}L^{2}}\leq h^{s}\|w\|_{Y_{h}^{s}}\leq h^{\nu}$, which   was the claim.
\\[8pt]
$\bullet$ Case $1<p< 4$. Here we have, by  \cite[Estimate (2.25)]{CaFaHa},  for all $0\leq s<1$
\begin{multline*} 
\big\||w+u_{app}|^{p}(w+{u_{app}})-|u_{app}|^{p}u_{app}\big\|_{H^{s}}\leq \\
C\|u_{app}\|_{H^{s}}(  \|u_{app}\|^{p-1}_{L^{\infty}}+\|w\|^{p-1}_{L^{\infty}}   )\|w\|_{L^{\infty}}+C\|w\|_{H^{s}}\big(\|u_{app}\|^{p}_{L^{\infty}}+\|w\|^{p}_{L^{\infty}}\big).
 \end{multline*}
With the  same arguments as for \eqref{bo1}   we get, with $\widetilde{\gamma}=1-1/r$
\begin{eqnarray*}
\|w\|_{Y^{s}}&\leq& CT^{\widetilde{\gamma}}\|u_{app}\|_{L_{T}^{\infty}H^{s}}\|u_{app}\|^{p-1}_{L_{T}^{\infty}L^{\infty}}\|w\|_{Y^{s}}+CT^{\gamma}\|u_{app}\|_{L_{T}^{\infty}H^{s}} \|w\|^{p}_{Y^{s}}+\\
&&+C\big(T\|u_{app}\|^{p}_{L_{T}^{\infty}L^{\infty}}+T^{\gamma} \|w\|^{p}_{Y^{s}}\big)\|w\|_{Y^{s}}+CTh^{\alpha(p)}\|Q\|_{L^{\infty}_{T} H^{s}}\nonumber\\
&\leq &C\big(  T^{\widetilde{\gamma}}h^{-s-(p-1)/4}+Th^{-p/4}  \big)\|w\|_{Y^{s}}+CT^{\gamma}h^{-s}\|w\|^{p}_{Y^{s}} +CT^{\gamma}\|w\|^{p+1}_{Y^{s}} +CTh^{-s+\alpha(p)},
\end{eqnarray*}
with $\alpha(p)=1-p/2$ when $2\leq p\leq 4$ and   $\alpha(p)=1/2-p/4$ when $1\leq p\leq 2$ (see \eqref{error1} and \eqref{error2}). Then, by the same manner, we get the following a priori estimate  with the semiclassical norm $\|\;\;\|_{Y^{s}_{h}}$ (recall definition \eqref{semic} )
\begin{equation}\label{828}
\|w\|_{Y_{h}^{s}}\leq C\big(  T^{\widetilde{\gamma}}h^{-s-(p-1)/4}+Th^{-p/4}  \big)\|w\|_{Y_{h}^{s}}+CT^{\gamma}h^{-s}\|w\|^{p}_{Y_{h}^{s}} +CT^{\gamma}\|w\|^{p+1}_{Y_{h}^{s}} +CTh^{-s+\alpha(p)}.
\end{equation}
We now perform the bootstrap : Let $r>\max{(2,p)}$ (there will be an additional constraint on $r$). Fix $\ds 1-\frac1r<s<1$ and set $T_{h}=h^{p}$. Then if $r$ is large enough (recall that $\widetilde{\gamma}=1-1/r$), the term $\ds T_{h}^{\widetilde{\gamma}}h^{-s-(p-1)/4}+T_{h}h^{-p/4}$ tends to 0 with $h$, therefore if $h>0$ is small enough, from \eqref{828} we deduce that 
 \begin{equation}\label{829}
\|w\|_{Y_{h}^{s}}\leq  CT_{h}^{\gamma}h^{-s}\|w\|^{p}_{Y_{h}^{s}} +CT_{h}^{\gamma}\|w\|^{p+1}_{Y_{h}^{s}} +CT_{h}h^{-s+\alpha(p)}.
\end{equation}
Choose $0<\nu<p+\alpha(p)$. As previously we assume that 
\begin{equation}\label{pot}
  \|w\|_{Y_{h}^{s}}\leq h^{-s+\nu}.
\end{equation} 
Then with \eqref{829} we get
 \begin{equation*}
\|w\|_{Y_{h}^{s}}\leq   Ch^{\gamma p -s+p(-s+\nu)}+Ch^{\gamma p +(p+1)(-s+\nu)}+Ch^{p-s+\alpha(p)}.
 \end{equation*}
 Next when $r>0$ is large enough (and under the assumption $0<\nu<p+\alpha(p)$), we have $\gamma p -s+p(-s+\nu)>-s+\nu$, $\gamma p +(p+1)(-s+\nu)>-s+\nu$ and $p-s+\alpha(p)>-s+\nu$. To see this, observe that $\gamma\longrightarrow 1$ and $s\longrightarrow 1$ when $r\longrightarrow +\infty$. Therefore for $h>0$ small enough, we  recover  $\ds  \|w\|_{Y_{h}^{s}}\leq \frac12 h^{-s+\nu}$, hence   the condition \eqref{pot} holds for $T_{h}=h^{p}$, and similarly to the previous part,  we deduce that  $\|u\|_{L^{\infty}_{T}L^{2}}\leq h^{s}\|w\|_{Y_{h}^{s}}\leq h^{\nu}$.
 \\[8pt]
$\bullet$ Case $p=1$. By \eqref{828} we have
\begin{equation*}
\|w\|_{Y_{h}^{s}}\leq C\big(  T^{1-\frac1r}h^{-s}+Th^{-1/4}  \big)\|w\|_{Y_{h}^{s}}+CT^{1-\frac1r}\|w\|^{2}_{Y_{h}^{s}} +CTh^{-s+\frac14}.
\end{equation*}
here we set $T_{h}=h^{1+\epsilon}$ with $\epsilon>0$, and we perform the same argument as in the previous case.
\end{proof}
 
Thanks to this proposition and the same argument as \eqref{arg}, we can conclude the proof of Theorem \ref{T:2}.

\appendix

\section{Quasimodes for linear equations near Elliptic Orbits}
\label{quasi}

In this section, we state, without proof, a theorem on existence of
quasimodes near elliptic periodic orbits of the Hamiltonian flow.  A
proof can be found in \cite{Chr-QMNC}.

Let $X$ be a smooth, compact manifold, $\dim X = n$, and suppose $P \in \Psi^{k,0}(X)$, $k
\geq 1$, be a
semiclassical pseudodifferential operator of real principal type which
is semiclassically elliptic outside a
compact subset of $T^*X$.  
Let $\Phi_t = \exp tH_p$ be the classical flow of $p$ and assume there is
a closed {\it elliptic} orbit $\gamma \subset \{ p = 0 \}$.  That $\gamma$
is elliptic means if $N \subset \{ p = 0 \}$ is a Poincar\'e section
for $\gamma$ and $S:N \to S(N)$ is the Poincar\'e map, then $dS(0,0)$ has
eigenvalues all of modulus $1$.  We will also
need the following non-resonance assumption:
\ben
\label{nonres}
\left\{ \begin{array}{l} \text{if } e^{\pm i \alpha_1}, e^{\pm i \alpha_2}, \ldots, e^{\pm i \alpha_k}
 \text{ are eigenvalues of }
dS(0,0), \text{ then } \\ \alpha_1 , \alpha_2, \ldots,
 \alpha_k 
\text{ are independent over }  \pi \ZZ.
\end{array} \right.
\een

Under these assumptions, it is well known that there is a family of
elliptic closed orbits $\gamma_z \subset \{ p = z \}$ for $z$ near
$0$, with $\gamma_0 = \gamma$.  In this work we consider the following eigenvalue problem for $z$ in a
neighbourhood of $z=0$:
\ben
\label{ev-prob-1}
\left\{ \begin{array}{l} (P - z) u = 0; \\ \| u \|_{L^2(X)} =
  1. \end{array} \right.
\een
We prove the following Theorem.
\begin{theorem}
\label{T:quasi-ch}
For each $m \in \ZZ$, $m >1$, and each $c_0>0$ sufficiently small, there is a finite, distinct family of values
\be
\{z_j \}_{j = 1}^{N(h)} \subset [-c_0 h^{1/m}, c_0 h^{1/m} ]
\ee
and a family of quasimodes $\{ u_j\} = \{u_j(h)\}$ with
\be
\WF u_j = \gamma_{z_j},
\ee
satisfying
\ben
\label{ev-prob-2}
\left\{ \begin{array}{l} (P - z_j) u_j = \O(h^\infty) \| u_j \|_{L^2(X)}; \\ \| u_j \|_{L^2(X)} =
  1. \end{array} \right.
\een
Further, for each $m \in \ZZ$, $m >1$, there is a constant $C = C(c_0,{1/m})$ such that
\ben
\label{ev-est-1}
C^{-1} h^{-n(1-1/m)} \leq N(h) \leq C h^{-n}.
\een
\end{theorem}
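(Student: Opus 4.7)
The plan is to conjugate $P$ microlocally near $\gamma$ to a Birkhoff--Weinstein quantum normal form, construct explicit quasimodes for the normal form by tensoring longitudinal Fourier modes with transverse Hermite states, and then count them. First I pass to symplectic Fermi coordinates $(s,\sigma,y,\eta)$ near $\gamma$, with $s \in \RR/L\ZZ$ parametrising $\gamma$ and $(y,\eta)$ the transverse phase-space variables so that $\gamma = \{y=\eta=0\}$. The ellipticity of $\gamma$ together with the nonresonance condition \eqref{nonres} permits a classical Birkhoff normal form reducing $p$, to any prescribed order of vanishing on $\gamma$, to a smooth function of $\sigma$ and of the transverse actions $I_j = \half(y_j^2+\eta_j^2)$. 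Quantising this and Borel-summing in $h$ produces a microlocal unitary $U$, defined in a neighbourhood of $\gamma$, such that
\begin{equation*}
U^* P U \equiv F\!\left(hD_s,\,h\big(a_1^*a_1+\half\big),\ldots,h\big(a_{n-1}^*a_{n-1}+\half\big);\,h\right) \quad \text{modulo } O(h^\infty),
\end{equation*}
where $a_j,a_j^*$ are the creation/annihilation operators of the transverse harmonic oscillators and $F$ is a real semiclassical symbol with $\partial_\sigma F \neq 0$.

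Next I write down the quasimodes of the right-hand side explicitly: for each $(\ell,k) \in \ZZ \times \NN^{n-1}$ take
\begin{equation*}
\widetilde u_{\ell,k}(s,y) = L^{-1/2}\, e^{2\pi i\ell s/L}\, h^{-(n-1)/4}\prod_{j=1}^{n-1}\varphi_{k_j}(y_j/\sqrt h),
\end{equation*}
where $\varphi_k$ is the $k$th $L^2$-normalised Hermite function. This is an exact joint eigenfunction with
\begin{equation*}
z_{\ell,k} = F\!\left(\tfrac{2\pi h\ell}{L},\,h\big(k_j+\half\big);\,h\right) = \tfrac{2\pi h\ell}{L} + h\sum_{j=1}^{n-1}\tfrac{\alpha_j}{L}\big(k_j+\half\big) + O(h^2).
\end{equation*}
Pulling back by $U$ and multiplying by a pseudodifferential cutoff $\chi \in \Psi_h^{0,0}$ supported in the neighbourhood where $U$ is valid, set $u_j = \chi\, U\widetilde u_{\ell,k}$. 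By the semiclassical ellipticity of $P$ outside any small neighbourhood of the invariant tube $\bigcup_z \gamma_z$, the error $(P-z_j)u_j$ can be absorbed to $O(h^\infty)\|u_j\|_{L^2}$. Gaussian localisation of the Hermite functions in phase space, together with propagation of singularities along the Hamiltonian flow, then gives $\WF u_j = \gamma_{z_j}$.

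For the counting, retain those indices with $z_{\ell,k} \in [-c_0 h^{1/m}, c_0 h^{1/m}]$. The upper bound $N(h)\leq Ch^{-n}$ follows because $|z_{\ell,k}|\leq 1$ forces $|\ell|,|k_j| \leq Ch^{-1}$, a box of volume $\sim h^{-n}$. For the lower bound, the window condition reads
\begin{equation*}
|k_j| \leq c\,h^{1/m-1},\qquad \Big|\,2\pi\ell + \sum_{j=1}^{n-1}\alpha_j\big(k_j+\half\big)\,\Big| \leq c\, h^{1/m-1}.
\end{equation*}
For each admissible multi-index $k$ there are $\sim h^{1/m-1}$ integers $\ell$ lying in the requisite interval; since the box of admissible $k$ has volume $\sim h^{-(n-1)(1-1/m)}$, the total count is $\sim h^{-n(1-1/m)}$. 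The nonresonance condition \eqref{nonres} is used precisely to guarantee that distinct index pairs produce distinct quasi-eigenvalues.

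The principal difficulty is pushing the Birkhoff normal form to $O(h^\infty)$ accuracy in a genuine neighbourhood of $\gamma$: the formal power series need not converge and one must combine Borel summation in $h$ with the small-divisor estimates afforded by \eqref{nonres} at every order. A secondary technical point, handled by a standard propagation-of-singularities argument in the spirit of Sj\"ostrand--Zworski, is the passage from an approximate eigenfunction microlocalised near $\gamma$ to a globally valid quasimode on $X$ via the semiclassical ellipticity of $P-z_j$ off the compact family $\bigcup_z \gamma_z$.
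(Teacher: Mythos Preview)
The paper does not actually prove this theorem: it is stated in Appendix~\ref{quasi} explicitly ``without proof'', referring to \cite{Chr-QMNC}, with only a one-line remark indicating the strategy used there.  That strategy is the quantum monodromy approach of Sj\"ostrand--Zworski: one constructs quasimodes on a Poincar\'e section as eigenfunctions of the transverse semiclassical harmonic oscillator, and then propagates them around $\gamma$ by the microlocally unitary monodromy family $M(x_0)$ defined in \eqref{E:mondef}; the periodicity condition becomes an eigenvalue problem for $M(L)$, and the nonresonance assumption~\eqref{nonres} gives the quantisation condition and the count.  Your approach is the older Guillemin--Weinstein route: a global quantum Birkhoff normal form $U^*PU \equiv F(hD_s, h(a^*a+\tfrac12);h)$ near $\gamma$, followed by explicit separated quasimodes $e^{2\pi i\ell s/L}\otimes \varphi_k(y/\sqrt h)$.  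Both are valid and well documented in the literature; your method makes the lattice-point counting for \eqref{ev-est-1} transparent, while the monodromy method avoids constructing the full normal form and dovetails with the machinery the paper actually uses elsewhere (Proposition~\ref{P:solution-to-0-eqn} and the inhomogeneous argument via $M(x_0)$).

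One small point worth tightening in your write-up: the microlocal conjugation $U$ and the cutoff $\chi$ are defined only in a fixed neighbourhood of $\gamma$, so to get $(P-z_j)u_j = \O(h^\infty)$ globally you must argue that the commutator $[P,\chi]U\widetilde u_{\ell,k}$ is $\O(h^\infty)$, not merely invoke ellipticity of $P-z_j$ off the tube.  This follows because the Hermite factors force $\WF(U\widetilde u_{\ell,k})\subset\gamma_{z_j}$, so the commutator, supported where $\nabla\chi\neq 0$, hits a region where $U\widetilde u_{\ell,k}$ is already $\O(h^\infty)$; you allude to this but it deserves a sentence.
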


\begin{rema}
The proof is essentially to construct quasimodes on the Poincar\'e
section as eigenfunctions for the semiclassical harmonic oscillator,
and then to propagate them around the orbit $\Gamma$.  This shows that
the quasimodes have the localization property as in Lemmas
\ref{L:comm-1} and \ref{L:comm-2}.

\end{rema}

\section{Some commutator estimates}

In this section, we prove two results which we have used in the above
computations.  Specifically, we have constructed approximate solutions
to the homogeneous and inhomogeneous equations associated to a
semiclassical operator of the form
\[
Q = hD_t - P(t, x, hD_x),
\]
where $P$ is a ``time-dependent'' harmonic oscillator,
\[
P(t, x, hD_x) = -h^2 \Delta(t) + V,
\]
where
\[
V = b^{ij}(t) x_i x_j
\]
is a positive definite quadratic form.  Both $\Delta(t)$ and $V$ have
$t$-periodic coefficients of period $T$ (the same as $\Gamma$), and we
seek periodic solutions to equations
\[
Q v = E v, \,\,\, (Q -E)v = f,
\]
where $E$ is an eigenvalue to be determined and $f$ is periodic in
$t$.  The constant-coefficient semiclassical harmonic oscillator is
well-known to have eigenfunctions of a semiclassically scaled Hermite
polynomial times a semiclassical Gaussian.  These eigenfunctions, for
small eigenvalues, have semiclassical wavefront set at $(0,0)$.
Moreover, for any $\epsilon, \delta >0$, if $| (x, \xi) | \geq \epsilon
h^{1/2 - \delta}$, these eigenfunctions are $\O(h^\infty)$ in the
Schwartz space.  The purpose of this section is to prove that for the
periodic orbit case, similar localization occurs.

\begin{lemma}
\label{L:comm-1}
Suppose $v$ solves
\[
\begin{cases}
Q v = E v + \O(h^\infty) \| v \|, \,\, E = \O(h), \\
\| v \| = 1, \\
v(0) = v(T) = v_0,
\end{cases}
\]
where $v_0$ satisfies the localization property:
\[
\begin{cases}
\forall \delta, \epsilon>0, \exists \psi \in \Ci_c ( \reals ), \psi
\equiv 1 \text{ on } \{ | (x, \xi ) | \leq \epsilon h^{1/2-\delta} \},
\\
\text{ with } \supp \psi \subset \{ | (x, \xi ) | \leq 2\epsilon
h^{1/2-\delta} \}, \text{ we have } \\
\Op_h( \psi ) v_0 = v_0 + \O(h^\infty ) \| v_0 \|.
\end{cases}
\]
Then there exists $\epsilon_1>0$, $\epsilon_1 \to 0$ as $\epsilon \to
0$, such that for all $0 \leq t \leq T$, if $\psi \in \Ci_c( \reals
)$, $\psi (r) \equiv 1$ for $\{ |r | \leq 1 \}$ with $\supp \psi (r)
\subset \{ | r | \leq 2 \}$, then 
\[
\Op_h ( \psi ( p (t, x, \xi)/ ( \epsilon_1^2
h^{1-2\delta})) ) v(t) = v(t) + \O(h^\infty ) \| v(t) \|.
\]
\end{lemma}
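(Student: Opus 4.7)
The strategy is to propagate the initial microlocal localization of $v_0$ forward in time using the Schr\"odinger-type evolution generated by $P(t)/h$, and to translate this into the desired cutoff property using the classical Hamiltonian flow of the time-dependent quadratic symbol $p(t, x, \xi) = |\xi|^2_{g(t)} + b^{ij}(t) x_i x_j$, which, being positive definite with linear flow, behaves well on fine phase-space scales.

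First, at $t=0$: since $p(0,\cdot)$ is a positive-definite quadratic form in $(x,\xi)$, there exists $c > 0$ (uniform for $t \in [0,T]$) such that $\{p(0, \cdot) \leq \epsilon_1^2 h^{1-2\delta}\} \supset \{|(x,\xi)| \leq c\epsilon_1 h^{1/2-\delta}\}$. Choosing $\epsilon_1$ so that $c\epsilon_1 \geq 2\epsilon$, the cutoff symbol $\psi(p(0,\cdot)/(\epsilon_1^2 h^{1-2\delta}))$ is identically $1$ on $\{|(x,\xi)| \leq 2\epsilon h^{1/2-\delta}\}$, and the hypothesis on $v_0$ yields
$$
\Op_h\bigl(\psi(p(0,\cdot)/(\epsilon_1^2 h^{1-2\delta}))\bigr) v_0 = v_0 + \O(h^\infty)\|v_0\|.
$$

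Next, let $U(t,s)$ denote the microlocal Schr\"odinger propagator for $hD_\tau - P(\tau)$ near $\Gamma$, so that $v(t) = U(t,0) v_0 + \O(h^\infty)\|v_0\|$. Writing
$$
A(t) v(t) - v(t) = U(t,0)\bigl(U(0,t) A(t) U(t,0) - I\bigr) v_0 + \O(h^\infty)\|v_0\|,
$$
a time-dependent Egorov theorem in the exotic $\s_{1/2-\delta, 1/2-\delta}$ calculus identifies $U(0,t)A(t)U(t,0)$ as a pseudodifferential operator whose principal symbol is $\psi(p(t,\phi_t(x,\xi))/(\epsilon_1^2 h^{1-2\delta}))$, where $\phi_t$ is the (time-dependent) Hamiltonian flow of $p$. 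Along any trajectory, $(d/dt) p(t, \phi_t(x_0,\xi_0)) = (\partial_t p)(t, \phi_t(x_0,\xi_0))$; since $p$ is quadratic with smoothly periodic coefficients, $|\partial_t p| \leq C p$, and Gr\"onwall gives $p(t, \phi_t(x_0, \xi_0)) \leq e^{CT} p(0, x_0, \xi_0)$ for $t \in [0,T]$. Enlarging $\epsilon_1$ by the factor $e^{CT/2}$ (still letting $\epsilon_1 \to 0$ as $\epsilon \to 0$), the symbol above equals $1$ on the microlocal support of $v_0$, so the right-hand side is $\O(h^\infty)\|v_0\|$.

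The principal technical difficulty is making the time-dependent Egorov argument precise at the $h^{1/2-\delta}$ scale, where each derivative of a symbol in $\s_{1/2-\delta,1/2-\delta}$ may cost a factor of $h^{-(1/2-\delta)}$. The saving feature is that the cutoff symbol is a function of $p(t,\cdot)$ alone, so $\{p, \psi(p/c)\} \equiv 0$ and the leading-order term in the Moyal expansion of $[P(t), A(t)]$ vanishes identically; subsequent terms gain factors of $h^{2\delta}$, and combined with the uniform boundedness of the linear symplectic flow $\phi_t$ on $[0,T]$ (a consequence of $p$ being quadratic and of the ellipticity of $\Gamma$), one obtains a genuinely convergent asymptotic expansion yielding $\O(h^\infty)$ control. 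Equivalently, one may close the estimate by an energy argument bounding $(d/dt)\|(I - A(t)) v(t)\|^2$: writing the commutator $h\partial_t A(t) + i[P(t), A(t)]$ in symbol form, the identity $\{p, F(p)\} = 0$ forces the principal symbol to reduce to $F'(p)\partial_t p$, which is supported where $A(t)$ is in transition and $v(t)$ is therefore negligible by a bootstrap on the scale of $\epsilon_1$.
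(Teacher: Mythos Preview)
Your proof is correct and takes essentially the same approach as the paper: both use the propagator for $hD_t - P(t)$, Egorov's theorem in the $\s_{1/2-\delta}$ calculus, and the Gr\"onwall bound coming from $|\partial_t p| \leq Cp$ along the Hamiltonian flow. The only cosmetic difference is that the paper conjugates the $t=0$ cutoff forward---setting $\Gamma(t) = I(t)\Psi I(t)^{-1}$ and reading off $\Gamma(t)v(t) = I(t)\Psi v_0 = v(t) + \O(h^\infty)$ directly---rather than conjugating the time-$t$ cutoff $A(t)$ backward to $t=0$ as you do; your extra discussion of why Egorov survives at the $h^{1/2-\delta}$ scale (via $\{p,\psi(p)\}=0$) is detail the paper simply takes for granted.
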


\begin{proof}

Let $\psi (r) \equiv 1$ for $| r | \leq 1$.  Then for any $\epsilon,
\delta>0$,
\[
\Op_h ( \psi(p(0, x, \xi)/(\epsilon^2 h^{1 - 2\delta}))) v_0 = v_0 +
\O(h^\infty),
\]
since $p(0, x, \xi)$ is comparable to $| ( x, \xi ) |^2$.  Let $I(t)$
be the forward propagator for $P$:
\[
\begin{cases}
(hD_t + P(t, x, hD) ) I(t) = 0, \\ 
I(0) = \id_{L^2 \to L^2}.
\end{cases}
\]
Let $\Psi = \Op_h ( \psi(p(0, x, \xi)/(\epsilon^2 h^{1 - 2\delta})))$,
and set 
\[
\Gamma(t) = I(t) \Psi I(t)^{-1}.
\]
We have $hD_t \Gamma = [P(t, x, hD), \Gamma(t, x, hD)]$,
and by Egorov's theorem, $\WF \Gamma$ is contained in the flowout by
$\exp (tH_p)$ of $\{ p(0, x, \xi)\leq 2\epsilon^2 h^{1 - 2\delta}
\}$.  Now the flowout of $\exp(t H_p)$ no longer preserves the level
set of $p$ because $p$ depends on $t$, however, if $(x(t), \xi(t))$ is
an integral curve, then
\[
\frac{d}{dt} p(t, x(t), \xi(t)) = p_t (t, x(t), \xi(t)),
\]
so that there is a constant $C>0$, independent of $h$ so that
\[
-C p \leq \frac{d}{dt} p(t, x(t), \xi(t)) \leq C p,
\]
by the homogeneity of $p$ for $(x, \xi)$ in a neighbourhood of
$(0,0)$.  Hence there is a constant $c_0$ so that
\[
c_0^{-1} p(0, x(0), \xi(0)) \leq p(t, x(t), \xi(t)) \leq c_0 p(0,
x(0), \xi(0))
\]
on the flowout for $0 \leq t \leq T$.  Hence a neighbourhood of
$(0,0)$ of order $h^{1/2 - \delta}$ stays of the same order, although
the size of $\epsilon>0$ may increase.

We have yet to show the asserted identity property acting on $v$.  But
for this, we simply note that $v(t) = I(t) v_0$, and $I(t)^* =
I(t)^{-1}$ to write
\[
\Gamma v(t) = I(t) \Psi v_0 = I(t) (v_0 + \O(h^\infty)_{L^2}) = v(t)  + I(t)
\O(h^\infty)_{L^2},
\]
and since $I(t)$ is unitary, we have proved the Lemma.

\end{proof}

We now know that modes and quasimodes are concentrated on a scale of
$h^{1/2 - \delta}$ for any $\delta>0$.  We can measure the distance to
$\Gamma$ in the transversal direction at the point $t$ on $\Gamma$
using $p(t, x, \xi)$, so it is convenient to have cutoffs which in a
sense depend only on $p$, and more specifically nearly commute with
$P(t, x, hD)$.

\begin{lemma}
\label{L:comm-2}
Let $P(t, x, hD)$ be as above and fix $N>0$.  Fix $\epsilon>0$ sufficiently
small and fix $0 \leq a < b < \epsilon$, and suppose 
$\phi^0 \in \Ci_c( \reals)$ has support $\supp \phi^0 \subset [a,b]$.
Then for each $\delta>0$ and for each $1 \leq j \leq N$, there exist symbols $\phi^j \in
\s_{1/2 - \delta}$ such that
\begin{enumerate}
\item[i] If $\tphi^0(t, x, \xi, h) = \phi^0(p(t, x, \xi) / h^{1 -
    2\delta} )$, then $\tphi^0 \in \s_{1/2 - \delta}$ and $\psi^0 =
  \Op_h ( \tphi^0)$ satisfies
\[
[P, \psi^0] = \O(h^{1/2 + \delta} )_{L^2 \to L^2},
\]

\item[ii] $\supp \phi^j \subset \{ a h^{1/2 - \delta} \leq p \leq b
  h^{1/2 - \delta} \}\cap \supp \nabla \tpsi^0$, for each $1 \leq j
  \leq N$, and 

\item[iii] if $\psi^j = \Op_h( \phi^j)$, then
\[
\psi = \psi^0 + \sum_{j =1 }^N h^{j(1/2 + \delta)} \psi^j
\]
satisfies
\[
[P, \psi] = h^{N(1/2 + \delta)}R,
\]
with $R : L^2 \to L^2$ bounded with compact $h$-wavefront set.
\end{enumerate}
\end{lemma}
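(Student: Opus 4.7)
The plan is to handle (i) by a direct symbolic calculation exploiting the fact that $p$ is a quadratic polynomial, and then to construct the corrections $\phi^1,\dots,\phi^N$ by an iterative transport-equation argument along the $H_p$-flow.

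For (i), I would first verify that $\tilde\phi^0\in\s_{1/2-\delta}$. On $\supp(\phi^0)'$ we have $p\sim h^{1-2\delta}$, and since $p$ is a positive-definite quadratic form this forces $|(x,\xi)|\lesssim h^{1/2-\delta}$; each derivative of $\tilde\phi^0$ then contributes a factor $\partial p/h^{1-2\delta}=\O(h^{-(1/2-\delta)})$, which is the defining bound of $\s_{1/2-\delta}$. For the commutator, the key observation is that $p$ is quadratic in $(x,\xi)$, so all Moyal brackets of order $\geq 3$ involving $p$ vanish identically; the Weyl commutator reduces to the exact identity
\begin{equation*}
[\Op_h^w(p),\psi^0]=-ih\,\Op_h^w(\{p,\tilde\phi^0\}).
\end{equation*}
Since $\tilde\phi^0$ depends on $(x,\xi)$ only through $p$, we have $\{p,\tilde\phi^0\}\equiv 0$, and this term vanishes. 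The remaining contribution to $[P,\psi^0]$ comes from the subprincipal piece of $P$ (the $h$-dependent corrections to $-h^2\Delta$ arising from the volume form), which lies in the standard $\s^{0,0}$ calculus; its Weyl commutator with a symbol in $\s_{1/2-\delta}$ produces a principal Poisson bracket of size $h\cdot h^{-(1/2-\delta)}=h^{1/2+\delta}$, bounded on $L^2$ by Calder\'on--Vaillancourt, yielding (i).

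For parts (ii) and (iii), I would argue by induction on $k$. Suppose one has constructed $\psi^{(k-1)}=\psi^0+\sum_{j=1}^{k-1}h^{j(1/2+\delta)}\psi^j$ with all $\phi^j$ supported in $\{ah^{1/2-\delta}\le p\le bh^{1/2-\delta}\}\cap\supp\nabla\tilde\phi^0$, and
\begin{equation*}
[P,\psi^{(k-1)}]=h^{k(1/2+\delta)}\Op_h^w(r^{(k)})+\O_{L^2\to L^2}(h^{(k+1)(1/2+\delta)}),
\end{equation*}
with $r^{(k)}\in\s_{1/2-\delta}$ supported in the same annulus. To extend one step, I would solve (to leading order) the transport equation $-i\{p,\phi^k\}=r^{(k)}$, i.e., $H_p\phi^k=ir^{(k)}$, on the annular region; rescaling by $\tilde p=p/h^{1-2\delta}$ this becomes a transport problem at unit scale near the ellipsoidal level set $\{\tilde p=1\}$.

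The main obstacle is the solvability of this transport equation with control of the support. Along each level set $\{p=c\}$ in the annulus, $H_p$ generates the flow of a harmonic oscillator, which by the non-resonance assumption on $\Gamma$ is quasi-periodic on invariant tori. The solvability condition is that the $H_p$-average of $r^{(k)}$ on each level set vanishes; this holds because $r^{(k)}$ is the symbol of an iterated commutator, so it integrates to zero against every $H_p$-invariant measure (which is precisely the cohomological obstruction being eliminated by the Weyl commutator structure). One then solves by integrating $ir^{(k)}$ along the characteristics of $H_p$, multiplies by a cutoff equal to $1$ on the annulus to enforce (ii), and verifies directly that the resulting $\phi^k$ inherits the $\s_{1/2-\delta}$ symbol bounds from $r^{(k)}$. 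The cutoff produces an error supported at the boundary of the annulus, of size $\O(h^{1/2+\delta})$, which is absorbed into the remainder at the next inductive step. After $N$ iterations we obtain $\psi$ with $[P,\psi]=h^{N(1/2+\delta)}R$, and $R$ has compact $h$-wavefront set because every symbol $\tilde\phi^0,\phi^1,\dots,\phi^N$ is compactly supported in phase space.
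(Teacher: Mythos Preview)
Your overall architecture---verify (i) by symbol calculus, then build $\phi^1,\dots,\phi^N$ by iteratively solving transport equations along $H_p$---is exactly the paper's approach. Two points deserve comment.

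For (i), your observation that $\tilde\phi^0$ is a function of $p$ and hence $\{p,\tilde\phi^0\}\equiv 0$ is correct and is in fact sharper than what the paper writes down. But the claim that all higher Moyal brackets vanish ``because $p$ is quadratic'' is not justified in the stated generality: in the paper's proof the principal symbol is $p(t,x,\xi)=a^{ij}(t,x)\xi_i\xi_j+b^{ij}(t)x_ix_j$, with $a^{ij}$ allowed to depend on $x$, so $p$ is not a degree-two polynomial in $(x,\xi)$ and the Moyal expansion need not terminate. This does not spoil (i)---the bound $\O(h^{1/2+\delta})$ follows anyway from the $\s_{1/2-\delta}$ calculus once $\{p,\tilde\phi^0\}=0$---but the clean algebraic argument you give only applies in the constant-coefficient case.

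For (ii)--(iii), you go beyond the paper in one respect: you correctly notice that the transport equation $H_p\phi^k=ir^{(k)}$ on the annulus has a genuine solvability obstruction, since the $H_p$-orbits are closed (or, in higher dimensions under the non-resonance hypothesis, dense on invariant tori), and a periodic solution requires the orbit average of $r^{(k)}$ to vanish. The paper simply invokes ``the Frobenius theorem'' and does not address this. However, your proposed resolution is not correct. The assertion that ``$r^{(k)}$ is the symbol of an iterated commutator, so it integrates to zero against every $H_p$-invariant measure'' holds for the \emph{leading} Poisson-bracket term $\{p,\cdot\}$ (which is a derivative along the flow), but $r^{(k)}$ is precisely the residue \emph{after} that term has been removed---it is built from higher Moyal terms and from the sub-principal part of $P$, and there is no structural reason these should have vanishing $H_p$-average. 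So you have located a real gap (one the paper also leaves open) but have not closed it.
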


\begin{proof}
The proof is relatively standard, however the addition of the periodic
``boundary conditions'' in $t$ adds a small difficulty, so we
reproduce the basic argument here.

The principal symbol of $P$ is
\[
p(t, x, \xi) = a^{ij}(t, x) \xi_i \xi_j + b^{ij}(t) x_i x_j,
\]
with both $a^{ij}$, $b^{ij}$ positive definite matrices.  By the
homogeneity of the quadratic forms, we clearly have $\tphi^0 \in
\s_{1/2 - \delta}$.  The symbol calculus then implies the assertion
(i), once we observe that $\tpsi^0$ cuts off to a compact region, and
hence $p$ is bounded there.

We now proceed to construct the $\phi^j$ satisfying (ii)-(iii).  For
notation simplicity, denote 
\[
A = \{ a h^{1/2 - \delta} \leq p \leq b
  h^{1/2 - \delta} \}\cap \supp \nabla \tpsi^0 \subset T^*M.
\]
From
(i) and the symbol calculus, we have
\[
[P, \psi^0] = \frac{h}{i} \Op_h ( \{ p, \tphi^0 \} ) + R_0,
\]
where $R_0 = h^{1 + 2 \delta} \Op_h(r_0) + R_{0,N}$, where $r_0 \in \s_{1/2 - \delta}$
with $\supp r_0 \subset A$, and $R_{0,N} = \O(h^{N(1/2 +
  \delta)})_{L^2 \to L^2}$ has compact $h$-wavefront set.  We now
compute for an arbitrary choice of $\psi^j \in \s_{1/2 - \delta}$ with compact support:
\[
[P, h^{j(1/2 + \delta)} \psi^j] = h^{j(1/2 + \delta)} \frac{h}{i} \Op_h ( \{
p, \phi^j \} ) + h^{j(1/2 + \delta)} R_j,
\]
where $R_j = h^{1 + 2 \delta} \Op_h (r_j) + R_{j,N}$.  As in the case
of $\phi^0$, here $r_j \in \s_{1/2 - \delta}$ has support contained in
$\supp \nabla \phi^j$ and $R_{j,N} = \O_{L^2 \to L^2}(h^{N(1/2 +
  \delta)})$ has compact $h$-wavefront set.

We observe that $\{ p, \phi^j \}$ has a prefactor of $h^{-1/2 +
  \delta}$ since $\phi^j \in \s_{1/2 - \delta}$ but $p \in \s_0$, so
we want to construct $\phi^j$ so that $h^{j(1/2 + \delta)} h \{ p,
\phi^j \}/i$ cancels the term of order $h^{(j+1)(1/2 + \delta)}$.  
Assume for $1 \leq k \leq
j-1$ we have found $\phi^k$ satisfying (ii) such that
\[
i h^{1/2 - \delta} \{ p, \phi^k \} - r_{k-1} = \O(h^{1/2 - \delta})
\]
with the $r_{k-1}$ and the error in $\s_{1/2 - \delta}$ with support
in $A$.  Then if $\Gamma^{j-1} = \sum_{k = 0}^{j-1}h^{k(1/2 + \delta)}
\psi^k$ satisfies
\[
[P, \Gamma^{j-1} ] = h^{(j-1)(1/2 + \delta)} \tilde{R}_{j-1},
\]
where $\tilde{R}_{j-1} = h^{1 + 2 \delta } \Op_h (\tilde{r}_{j-1} ) + R_{j-1,N}$
with $r_{j-1} \in \s_{1/2 - \delta}$, $\supp r_{j-1} \subset A$, and
$R_{j-1,N} = \O(h^{N(1/2 + \delta)})$ with compact $h$-wavefront set.
Then we want to solve
\[
i h^{1/2 - \delta} \{ p, \phi^j \} - r_{j-1} = \O(h^{1/2 -\delta}).
\]
If $r_{j-1,0}$ is the principal symbol of $r_{j-1}$, we apply the
Frobenius theorem to find such a $\phi^j$.  The support properties
follow from the assumed support properties on $r_{j-1}$ and the
observation that we can always multiply $\phi^j$ by a function of $p$
to ensure it is supported in $A$, at the expense of another compactly
supported error of order $h^{1/2 + \delta}$.  The symbolic properties
follow from the assumed symbolic properties of $r_{j-1}$.

\end{proof}

\section{Real principal type}

Let us quickly show that our local principal symbol near $\Gamma$ can
be glued into a symbol of real principal type.  Let us recall that a
symbol $p$ is of 
real principal type if the principal symbol is real valued, smooth,
has compact level sets, is elliptic outside a compact set, and $dp
\neq 0$ on $\{p = 0 \}$.  Our
local model near $\Gamma$ is of the form
\[
p_1(t, x, \xi) = - \tau + a^{ij}(t, x) \xi_i \xi_j + b^{ij}(t) x_i
x_j,
\]
$a^{ij}$ and $b^{ij}$ are positive definite symmetric matrices, $t \in \SS^1$ and $\tau$ is the dual variable to $t$.  Let $p_0
= a^{ij}(t, x) \xi_i \xi_j + b^{ij}(t) x_i
x_j$, which is a suitable measure of the distance squared to $\Gamma$.
Fix $\delta>0$, and let $\chi(r) \in \Ci_c(\reals)$ be equal to $1$
for $| r | \leq \delta$, and $\chi(r) \equiv 0$ for $| r | \geq 2
\delta$.  Let $q = \tau^2 + p_0$, which is elliptic outside a
compact set, and set
\[
p_2 = \chi(p_0) p_1 + (1 - \chi(p_0)) q.
\]
The function $p_2$ satisfies all the requirements of real principal
type, once we show that $d p_2 \neq 0$ on $\{ p_2 = 0 \}$.  For this,
first note that
\[
\{ p_2 = 0 \} \subset ( \{ \tau = p_0 \} \cap \{ p_0 \leq 2\delta \} )
\cup (\{ q = 0 \} \cap \{ p_0 \geq \delta\}).
\]
The latter set is empty, but the first is not.
We observe that 
\[
dp_2 = (1 + \chi'(p_0)(- \tau -  \tau^2)) d p_0 + (2(1 - \chi) \tau -
\chi) d \tau,
\]
and that $dp_0 = 0$ only if $x = \xi = 0$.  
Now $\chi$ is a non-increasing function of $p_0$, so on $\{ \tau = p_0
\} \cap \{ p_0 \leq 2\delta \}$, $\chi'(p_0) (- \tau - \tau^2) \geq
0$.  Hence if $(x, \xi) \neq (0,0)$, $dp_2 \neq 0$.  If $x = \xi = 0$,
we have $p_0 = 0$, so on $\{\tau = p_0 \} \cap \{ p_0 \leq 2\delta
\}$, $\tau = 0$ also.  But 
\[
(2(1 - \chi) \tau -
\chi) d \tau \neq 0
\]
for $\tau$ in a neighbourhood of $0$, which shows $d p_2 \neq 0$ on
$\{ p_2 = 0 \}$.

\section{Rescaled wavefront sets: an example}

In this section, we provide for the reader's convenience an example
where one encounters 
the rescaled wavefront sets.  Let us consider the quantum harmonic
oscillator
\[
P = (hD_x)^2 + x^2.
\]
The eigenfunctions $P u_j = E_j u_j$, are semiclassical Hermite
polynomials times semiclassical Gaussians.  If $u_0(x) = c_0 h^{-1/4}
e^{-x^2/2h}$ is the ($L^2$-normalized) semiclassical Gaussian, and if
$\chi(x) \in \Ci_c(\reals)$ is equal to $1$ near $0$, then clearly
\[
\| (1 - \chi(h^{-\delta} x)) u_0 \| = \O(h^\infty)
\]
in any seminorm, provided $\delta<1/2$.  On the other hand, a simple
computation shows that the semiclassical Fourier transform of $u_0$
\[
\mathcal{F}_h(u_0)( \xi) = c_0' h^{-1/4} e^{-\xi^2/2h},
\]
so that $\mathcal{F}_h(u_0)$ has the same localization in $\xi$ as
$u_0$ did in $x$.  In other words, for any $0 \leq \delta < 1/2$, 
\[
\dWF(u_0) = \{(0,0) \}.
\]
A similar argument implies the same is true for any $u_j$ provided
$E_j = \O(h)$.  

On the other hand, let us see how to use Corollary \ref{C:dWF-cor} to
prove the same localization.  If $E_j = \O(h)$, then the principal of
$P - E_j$ is $p = \xi^2 + x^2$.  By homogeneity, we can rescale 
\[
p = h^{2 \delta} ( ( h^{-\delta}\xi)^2 + (h^{-\delta} x)^2),
\]
which is a symbol in $\s^{2,-2 \delta}_\delta$.  As this symbol is
$\O(h^\infty)$ only at $(x, \xi) = (0,0)$, we get the same result as
above.

\section{Harmonic oscillator eigenfunctions}
\label{A:harmosc}

Let $P_0 = -d^2/dx^2 + x^2$ be the one-dimensional quantum harmonic
oscillator operator.  The theory of the eigenfunctions of $P_0$ is
well established, however we need several important facts recalled
here.

\begin{proposition}
Each eigenfunction $u_n$, $n = 0, 1, 2, \ldots$, of $P_0$ is a polynomial times a
Gaussian:
\[
u_n(x) = H_n(x) e^{-x^2/2},
\]
with eigenvalue $\lambda_n = 2n+1$.

The $u_n$ can be taken to be real-valued, they form a complete
orthonormal basis of $L^2( \reals)$, and the zeros of $u_n$ are simple.

\end{proposition}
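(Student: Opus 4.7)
The plan is to use the standard creation/annihilation operator formalism. First I would introduce the ladder operators $a = \tfrac{1}{\sqrt{2}}(x + d/dx)$ and $a^* = \tfrac{1}{\sqrt{2}}(x - d/dx)$, verify the commutation relations $[a, a^*] = 1$, and observe
\begin{equation*}
P_0 = 2 a^* a + 1, \qquad [P_0, a^*] = 2 a^*, \qquad [P_0, a] = -2 a.
\end{equation*}
Solving $a u_0 = 0$ as a first-order ODE gives $u_0(x) = c_0 e^{-x^2/2}$, which satisfies $P_0 u_0 = u_0$, so $\lambda_0 = 1$. Then $u_n := (a^*)^n u_0$ yields an eigenfunction with eigenvalue $\lambda_n = 2n+1$ by induction on $n$, using $P_0 (a^*)^n u_0 = (a^*)^n P_0 u_0 + 2n (a^*)^n u_0$. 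A direct check shows $a^*$ sends $p(x) e^{-x^2/2}$ to $q(x) e^{-x^2/2}$ with $\deg q = \deg p + 1$, so $u_n = H_n(x) e^{-x^2/2}$ with $H_n$ a polynomial of degree exactly $n$; because the operators $a, a^*$ have real coefficients and $u_0$ is real, $H_n$ is real-valued.

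Next, orthonormality follows from the standard Fock-space computation: the identities $[P_0, a] = -2a$ force distinct eigenspaces to be orthogonal, and one computes $\langle u_n, u_n \rangle = 2^n n! \langle u_0, u_0 \rangle$ via repeated use of $a a^* = a^* a + 1$, so after renormalizing we obtain an orthonormal family. For simplicity of zeros, I would invoke ODE uniqueness: each $u_n$ satisfies the second-order linear equation $-u_n'' + (x^2 - \lambda_n) u_n = 0$, so if some $x_0$ were a multiple zero then $u_n(x_0) = u_n'(x_0) = 0$ would force $u_n \equiv 0$, contradicting $u_n \neq 0$.

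The main obstacle is completeness of $\{u_n\}$ in $L^2(\mathbb R)$. My approach would be to show that the span of the $u_n$ equals the span of $\{x^k e^{-x^2/2} : k \ge 0\}$ (immediate from $\deg H_n = n$), and then argue density of the latter. For this, I would pick $f \in L^2(\mathbb R)$ orthogonal to every $u_n$ and consider
\begin{equation*}
F(z) = \int_{\mathbb R} f(x)\, e^{-x^2/2}\, e^{zx}\, dx,
\end{equation*}
which is entire in $z$ of order at most $2$ by the Cauchy--Schwarz estimate (since $e^{-x^2/2} e^{\Re(z) x} \in L^2$). Expanding $e^{zx}$ as a power series and using orthogonality to each $x^k e^{-x^2/2}$ shows $F \equiv 0$. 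Restricting to $z = i\xi$ and using injectivity of the Fourier transform on $L^2$ forces $f(x) e^{-x^2/2} = 0$ a.e., hence $f = 0$. This gives completeness, so $\{u_n\}$ is an orthonormal basis after normalization.
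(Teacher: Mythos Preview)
Your proposal is correct and follows the same ladder-operator construction as the paper for the eigenfunctions and eigenvalues. Two points of departure are worth noting. For the simplicity of the zeros, the paper argues by induction on $n$: using $A_- u_{n+1} = (2n+2)u_n$ and differentiating, it shows that a double zero of $u_{n+1}$ would force a double zero of $u_n$. Your argument via uniqueness for the second-order ODE $-u'' + (x^2 - \lambda_n)u = 0$ is more direct and avoids the induction entirely. For completeness, the paper simply asserts that ``a simple computation shows the eigenfunctions $\{u_n\}$ form a complete orthonormal set,'' whereas you supply the classical entire-function/Fourier argument in full; your version is thus more self-contained, at the cost of a few extra lines.
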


\begin{proof}
The proof as usual is by creation and annihiliation operators.  Let 
\[
A_\pm = D_x \pm i x,
\]
so that $A_\pm = A_\mp^*$ and
\[
A_+ A_- = P_0 - 1 = A_- A_+ - 2.
\]
A computation shows
\[
A_- e^{-x^2/2} = 0,
\]
so that
\[
A_+ A_- e^{-x^2/2} = 0 = (P_0 -1) e^{-x^2/2}.
\]
The other eigenfunctions are constructed with the creation operator:
\[
u_n = A_+^n e^{-x^2/2},
\]
and a simple computation shows the $u_n$ is a polynomial times the
Gaussian $e^{-x^2/2}$.  The creation operator $A_+ = -i( \partial_x
-x)$ is $-i$ times a real-valued operator, so the polynomials can be
taken to be real-valued.  By induction, we have
\begin{align*}
P_0 u_n & = (A_+ A_- +1) u_n \\
& = A_+ (A_- A_+^n) u_0 + u_n \\
& = A_+(P_0 + 1) A_+^{n-1} u_0 + u_n \\
& = A_+ (2(n-1) + 2) u_{n-1} + u_n \\
& = (2n+1)u_n.
\end{align*}
A simple computation shows the eigenfunctions $\{u_n \}$ form a
complete orthnormal set.

To show the zeros are simple, we again assume for induction that $u_n$
has only simple zeros, and 
suppose $u_{n+1}(x_0) = 0$.  Then
\begin{align*}
A_- u_{n+1} & = A_- A_+ u_n \\
& = (P_0 + 1) u_n \\
& = (2n+2) u_n,
\end{align*}
so 
\[
u_{n+1}' (x) + x u_{n+1}(x) = i(2n+2) u_n(x),
\]
and if $u_{n+1}(x_0) = u_{n+1}'(x_0) = 0$, then $x_0$ is a zero of
$u_n$ as well.  Differentiating again, and using the fact that
$u_{n+1}$ is an eigenfunction, we get
\begin{align*}
i (2n+2) u_n'(x) & = u_{n+1}''(x) + u_{n+1}(x) + x u_{n+1}'(x) \\
& = - (2n+2) u_{n+1}(x) + x^2 u_{n+1}(x) + x u_{n+1}'(x) ,
\end{align*}
so if $u_{n+1}(x_0) = u_{n+1}'(x_0) = 0$, then $u_n'(x_0) = 0$ as
well, which contradicts the induction hypothesis.  Hence $u_{n+1}(x)$
has only simple zeros.

\end{proof}

We are interested in these properties of the quantum harmonic
oscillator eigenfunctions because, for the nonlinear problem studied in
this note, we will take a non-smooth function of these eigenfunctions,
and we want to understand the singularities.  Let $p>0$, let $u_n(x)$
be an eigenfunction of the quantum harmonic oscillator, and set
\[
v(x) = |u_n(x)|^p u_n(x).
\]

\begin{proposition}
\label{P:v-symbolic}
The function $v(x)$ is rapidly decaying, $v(x) \in \mathcal{C}^1 \cap
H^1$, and 
\[
\hat{v}( \xi ) \in \scl^{-2-p},
\]
where $\scl^{-2-p}$ is the space of classical symbols of order $-2-p$.

\end{proposition}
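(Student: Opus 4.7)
The plan is to exploit the fact (just established) that the zeros $\{x_1,\dots,x_N\}$ of $u_n$ are simple. Write everything near each zero in a factorized form and reduce to the Fourier asymptotics of a localized homogeneous distribution of the form $y|y|^p$.

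First, since $u_n$ is real-valued and Schwartz, $v(x)=\mathrm{sgn}(u_n(x))|u_n(x)|^{p+1}$ is pointwise bounded by $|u_n(x)|^{p+1}$, which is rapidly decaying; this handles rapid decay. For the local regularity, I would fix a zero $x_k$ and factor $u_n(x)=c_k(x-x_k)w_k(x)$ with $w_k$ smooth and $w_k(x_k)=1$; shrinking the neighborhood, $w_k>0$ so $|w_k|^p=w_k^p$ is smooth. Then locally
\[
v(x)=c_k|c_k|^p\,w_k(x)|w_k(x)|^p\,\mathrm{sgn}(x-x_k)|x-x_k|^{p+1}=:g_k(x)\,\mathrm{sgn}(x-x_k)|x-x_k|^{p+1},
\]
with $g_k\in\mathcal C^\infty$. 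Since $p>0$, $\mathrm{sgn}(y)|y|^{p+1}$ is $\mathcal C^1$ with derivative $(p+1)|y|^p$ which is continuous and vanishes at $0$; multiplying by the smooth $g_k$ preserves the $\mathcal C^1$ property. Away from the zero set, $v$ is smooth, so $v\in\mathcal C^1(\mathbb R)$ globally. The $H^1$ assertion follows: $v'$ is continuous, bounded on compact sets, and inherits the rapid decay of $u_n$, hence $v'\in L^2$.

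For the symbol statement, I would fix a partition of unity $\chi_0+\sum_{k=1}^N\chi_k=1$ with $\chi_k$ supported in a small neighborhood of $x_k$ and $\chi_0$ supported away from $\{x_1,\dots,x_N\}$. Then $\chi_0 v$ is Schwartz, contributing $\mathcal O(\xi^{-\infty})$ to $\hat v$. For each $k$, after translating by $x_k$, the contribution is
\[
e^{-ix_k\xi}\,\mathcal F\!\left[\widetilde g_k(y)\,\mathrm{sgn}(y)|y|^{p+1}\right]\!(\xi),
\]
where $\widetilde g_k(y):=\chi_k(y+x_k)g_k(y+x_k)$ is smooth and compactly supported. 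Taylor expanding $\widetilde g_k(y)=\sum_{j=0}^{M}a_{k,j}y^j+r_{k,M}(y)$, the main task is to understand $\mathcal F\bigl[\phi(y)y^j\mathrm{sgn}(y)|y|^{p+1}\bigr]$ for a smooth cutoff $\phi$. Since $y^j\mathrm{sgn}(y)|y|^{p+1}=\mathrm{sgn}(y)|y|^{p+j+1}$ or $|y|^{p+j+1}$ depending on parity of $j$, these are classical homogeneous distributions on $\mathbb R$ whose Fourier transforms are known explicitly (from $\mathcal F(|y|^\alpha)=c_\alpha|\xi|^{-\alpha-1}$ and $\mathcal F(\mathrm{sgn}(y)|y|^\alpha)=c'_\alpha\,\mathrm{sgn}(\xi)|\xi|^{-\alpha-1}$ in the distributional sense); localizing by $\phi$ changes the Fourier transform by a Schwartz convolutant, so the asymptotics as $|\xi|\to\infty$ are of order $|\xi|^{-(p+j+2)}$. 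Summing over $j$ produces the classical symbol expansion
\[
\hat v(\xi)\sim\sum_{k=1}^N e^{-ix_k\xi}\sum_{j\ge 0}a_{k,j}\,C_j\,|\xi|^{-(p+j+2)}\bigl[\text{even/odd factor}\bigr],
\]
which establishes $\hat v\in\mathcal S^{-2-p}_{\mathrm{cl}}$.

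The only real obstacle is the last step: making rigorous the symbol-expansion claim for $\mathcal F[\phi(y)\mathrm{sgn}(y)|y|^{p+\ell}]$ and controlling the remainder $\mathcal F[\phi(y)r_{k,M}]$, where $r_{k,M}\,\mathrm{sgn}(y)|y|^{p+1}$ vanishes to order $M+p+1$ at $0$ and is compactly supported, hence its Fourier transform decays at rate $|\xi|^{-(M+p+2)}$ by standard integration-by-parts arguments once $M$ is large enough that enough derivatives are classical. The phases $e^{-ix_k\xi}$ are bounded and do not affect the symbol class (they enter the subprincipal structure), so we still obtain a classical symbol of order $-2-p$, with well-defined principal part
\[
\sigma_{-2-p}(\hat v)(\xi)=C_0\,|\xi|^{-2-p}\sum_{k=1}^N a_{k,0}\,e^{-ix_k\xi}\,\mathrm{sgn}(\xi),
\]
up to a nonzero constant depending on $p$.
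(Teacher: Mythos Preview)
Your argument is correct and takes essentially the same route as the paper: partition of unity at the simple zeros, local factorization $v\sim g_k(y)\cdot y|y|^p$, and then Fourier transform; the paper simply compresses your Taylor-expansion and homogeneous-distribution computation into the single word ``conormal.'' Your hesitation about the oscillatory factors $e^{-ix_k\xi}$ is legitimate in the strict $S^{m}_{1,0}$ sense (derivatives of the phase do not gain decay), but the paper's own proof glosses over exactly the same point, and only the pointwise bound $|\hat v(\xi)|\lesssim\langle\xi\rangle^{-2-p}$ is actually used in the corollary that follows.
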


In particular, we are interested in semiclassical rescaling, and to
what extent $v$ is localized in phase space.  The function $v$ is not
smooth, so it does not have compact semiclassical wavefront set, but
because of the symbolic assertion in the previous proposition, there
is some decay at infinity, as described in the next corollary.

\begin{corollary}
\label{C:v-symbolic}
For any $\delta>0$, the function $v(x)$ satisfies:
\[
\| v(x) \|_{H^1(B(0, h^{-\delta})^\complement)} = \O(h^\infty).
\]
Moreover, for any $0 \leq \gamma \leq 1$ and $0 \leq s \leq 3/2$, the semiclassical Fourier transform satisfies
\[
\| | \xi/h|^s \F_h v \|_{L^2(B(0, h^\gamma)^\complement)} \leq C h^{(1
  - \gamma)(3/2 + p -s )}.
\]

In particular, if $\chi(x, \xi) \in \Ci_c(\reals^{2})$ is $1$ in a
neighbourhood of $(0,0)$, then
\[
 \chi(h^\delta x, h^{1-\gamma} D_x) v = v + E,
\]
where for any $0 \leq s \leq 3/2$, 
\[
\| E \|_{\dH^s} \leq C h^{(1
  - \gamma)(3/2 + p -s )}.
\]

\end{corollary}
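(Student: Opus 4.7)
The plan is to deduce the three assertions directly from Proposition~\ref{P:v-symbolic}, namely from the Schwartz decay of $v$ away from the simple zeros of $u_n$ and from the symbol bound $\hat v\in\scl^{-2-p}$.

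For the first assertion, $u_n(x)=H_n(x)e^{-x^2/2}$ is Schwartz and the formula $v'=(p+1)|u_n|^p u_n'$ extends continuously across the finitely many simple zeros of $u_n$ because $p>0$; hence $|v|+|v'|\le C_N\langle x\rangle^{-N}$ for every $N$, and integrating over $\{|x|\ge h^{-\delta}\}$ gives $\|v\|_{H^1(B(0,h^{-\delta})^\complement)}=\O(h^\infty)$. The analogous $\O(h^\infty)$ bound in $\dH^s$ for $0\le s\le 3/2$ follows by splitting $v=v_{\mathrm{near}}+v_{\mathrm{far}}$ into a compactly supported piece near the finite zero set (which vanishes on the complement of $\{|x|\le h^{-\delta}\}$ once $h$ is small) and a Schwartz, uniformly smooth piece.

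For the second assertion I would write $\F_h v(\xi)=h^{-1/2}\hat v(\xi/h)$ and substitute $\eta=\xi/h$, converting $\{|\xi|\ge h^\gamma\}$ into $\{|\eta|\ge h^{\gamma-1}\}$, so that
\[
\||\xi/h|^s \F_h v\|_{L^2(B(0,h^\gamma)^\complement)}^2
=\int_{|\eta|\ge h^{\gamma-1}}|\eta|^{2s}|\hat v(\eta)|^2\,d\eta
\le C\int_{h^{\gamma-1}}^{\infty}\rho^{2s-4-2p}\,d\rho
=Ch^{(1-\gamma)(3+2p-2s)},
\]
where both the convergence and the stated power follow from the inequality $2s-4-2p<-1$, which holds for $0\le s\le 3/2$ and $p>0$. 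Taking square roots gives the second assertion.

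For the third assertion I would choose a factored dominating cutoff $\tilde\chi(x,\xi)=\chi_1(x)\chi_2(\xi)$ with $\chi_j\in\Ci_c$ equal to $1$ on a neighbourhood of $0$ large enough that $\chi-\chi\tilde\chi\in\Ci_c$ is supported away from $(0,0)$; this residual symbol contributes only $\O(h^\infty)$ in every $\dH^s$ by the wavefront localization of $v$ obtained from the first two assertions. For the main product piece I then decompose
\[
v-\tilde\chi(h^\delta x,h^{1-\gamma}D_x)v=(1-\chi_1(h^\delta x))v+\chi_1(h^\delta x)(1-\chi_2(h^{1-\gamma}D_x))v,
\]
bounding the first summand in $\dH^s$ by the first assertion and the second by the second assertion, using that multiplication by $\chi_1(h^\delta\cdot)$ is bounded on $\dH^s$ uniformly in $h\le 1$ since $h^{k\delta}\|\chi_1^{(k)}\|_\infty\le\|\chi_1^{(k)}\|_\infty$ for every $k$. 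The main obstacle is this last step: one must verify that the rescaled spatial multiplier does not spoil the fractional Sobolev estimate up to $s=3/2$ and that the non-factored correction genuinely produces an $\O(h^\infty)$ error. Both are standard once the symbolic calculus and the localization of $v$ are in place, so the crux of the argument really is the elementary one-dimensional symbol estimate of Proposition~\ref{P:v-symbolic}.
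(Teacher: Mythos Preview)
The paper does not give a separate proof of this corollary: it is stated immediately after Proposition~\ref{P:v-symbolic} and only the proposition is proved, the corollary being treated as a direct consequence of the rapid decay of $v$ and the symbol bound $\hat v\in\scl^{-2-p}$. Your argument supplies precisely those details. The first assertion is correct as written, and your computation for the second assertion---rescaling $\F_h v(\xi)=h^{-1/2}\hat v(\xi/h)$, substituting $\eta=\xi/h$, and integrating the tail of $|\eta|^{2s-4-2p}$---is exactly the intended one-line deduction from $\hat v\in\scl^{-2-p}$.

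There is one point in the third assertion to tighten. You assert that the residual piece coming from $\chi-\chi\tilde\chi$ (equivalently, from the comparison of $\chi$ with the factored cutoff $\tilde\chi$) contributes $\O(h^\infty)$ ``by the wavefront localization of $v$ obtained from the first two assertions.'' But the second assertion gives only \emph{polynomial} decay in the frequency direction, not $\O(h^\infty)$. Concretely, whatever annular symbol you end up with when comparing $\chi$ and $\tilde\chi$ will be supported on a set that contains points of the form $(0,\xi_0)$ with $\xi_0\neq0$; after rescaling this localizes to $|\xi|\sim h^{\gamma-1}$, where the bound from the second assertion is $h^{(1-\gamma)(3/2+p-s)}$, not $h^\infty$. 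This does not harm the conclusion---that is exactly the bound claimed for $E$---but you should not claim $\O(h^\infty)$ for this piece. Once you replace that overclaim by the polynomial bound (or, equivalently, decompose $1-\chi$ by a partition of unity into a piece supported in $\{|x|\gtrsim 1\}$ and a piece supported in $\{|\xi|\gtrsim 1\}$ and apply the two assertions separately), the argument is complete and matches what the paper has in mind.
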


\begin{proof}[Proof of Proposition \ref{P:v-symbolic}]

Each zero of $u_n$ is a simple zero, so we can write 
\[
v(x) = \sum_{l=0}^n v_l(x),
\]
with $v_0 \in \s$, and for $1 \leq l \leq n$, $v_l$ has compact
support containing a single zero of $u_n$.  If $x_l$ is a zero of
$u_n$ contained in the support of $v_l$, then 
\[
v_l ( x + x_l) \sim | x |^p x
\]
near $x = 0$, and $v_l(x + x_l)$ is smooth and compactly supported
away from from $x=0$.  Then $v_l(x + x_l)$ is conormal at $x = 0$,
which implies $\hat{v}_l$ is a symbol in class $\scl^{-2-p}$.  Summing
in $l$, and using that $\hat{v}_0 \in \scl^{-\infty}$, we get $\hat{v}
\in \scl^{-2-p}$ as claimed.

\end{proof}


\bibliographystyle{alpha}
\bibliography{acgmt-bib}

\end{document}